\newtheorem{theorem}{Theorem}[section]
\newtheorem{proposition}[theorem]{Proposition}
\newtheorem{lemma}[theorem]{Lemma}
\newtheorem{corollary}[theorem]{Corollary}
\theoremstyle{definition}
\newtheorem{definition}[theorem]{Definition}
\theoremstyle{remark}
\newtheorem{remark}[theorem]{Remark}
\numberwithin{equation}{section}
\newcommand{\be}{\begin{equation}}
\newcommand{\ee}{\end{equation}}
\newcommand{\bbC}{{\mathbb C}}
\newcommand{\bbZ}{{\mathbb Z}}
\newcommand{\bbR}{{\mathbb R}}
\newcommand{\calA}{{\mathcal A}}
\newcommand{\calH}{{\mathcal H}}
\DeclareMathOperator{\tr}{Tr}
\newcommand{\norm}[1]{\lVert#1\rVert}
\newcommand{\h}{\hbar}
\newcommand{\zbar}{\overline{z}}
\newcommand{\x}{\overline{x}}
\newcommand{\y}{\overline{y}}
\newcommand{\F}{\mathcal{F}\partial X_c}
\newcommand{\Fn}{\mathcal{F}\partial X_c^n}
\begin{document}

\openup1pt
\newcommand{\medno}{\medskip\noindent}

\renewcommand{\H}{\hat{H}}
\newcommand{\Hcan}{H_{\text{\tiny can}}}

\bigskip
\bigskip

\title{Algebras of semiclassical pseudodifferential operators associated with Zoll-type domains in cotangent bundles}

\author{Gerardo Hern\'andez-Due\~nas$^*$}
\address{Department of Mathematics\\
University of Michigan\\ 530 Church Street \\ Ann Arbor, Michigan 48109-1043.}
\email{gerahdez@umich.edu}
\thanks{$^*$Work supported in part by CONACYT \#160147}

\author{Alejandro Uribe$^{**}$}
\address{Department of Mathematics\\
University of Michigan\\ 530 Church Street \\ Ann Arbor, Michigan 48109-1043.}
\email{uribe@umich.edu}
\thanks{$^{**}$Work supported in part by NSF grant DMS-0805878}

\maketitle


\begin{abstract}
We are consider domains in cotangent bundles with the property that the null foliation of their boundary is fibrating and the leaves satisfy a Bohr-Sommerfeld condition (for example, the unit disk bundle of a Zoll metric).  Given such a domain, we construct an algebra of associated
semiclassical pseudodifferential operators with singular symbols.
The Schwartz kernels of the operators have frequency set contained in the union of the diagonal and the flow-out of the null foliation of the boundary of the domain. We develop a symbolic calculus, prove
the existence of projectors (under a mild additional assumption) whose range can be thought of as quantizing the domain, give a symbolic proof of a Szeg\"o limit theorem, and study associated propagators. 
\end{abstract}

\tableofcontents

\section{Introduction}

Let $M$ be a smooth compact manifold, and $X_c\subset X:= T^*M$ a closed compact domain with a smooth boundary.
In this paper we address the question: what quantum objects are naturally associated with $X_c$?  This 
question has been addressed indirectly in ``classical" cases.  For example,  if $X_c$ is the unit disk bundle
 associated to a Riemannian metric on $M$ assumed compact, then 
morally speaking to $X_c$ corresponds the ($\h$-dependent) subspace $\calH$ of $L^2(M)$ spanned by the 
eigenfunctions of the semiclassical Laplacian, $\widehat P = \h^2\Delta$, with eigenvalues in $[0,1]$.  This 
heuristics is in good measure justified by the theorem that, if $\Pi: L^2(M)\to \calH$ is the orthogonal 
projection, then for any $\h$-pseudodifferential operator $\widehat Q$ one has:
\begin{equation}\label{intro1}
\lim_{\h\to 0} \frac{1}{\dim \calH}\ \tr \Pi \widehat Q\Pi = \frac{1}{\text{Vol} X_c}\ \int_{X_c}\,Q \, d\lambda ,
\end{equation}
where $Q$ is the principal symbol of $\widehat Q$ and $d\lambda$ is Liouville measure (see \cite{Gui77}).  
But can one say more?

We discuss a more systematic answer to this question in the case when $X_c$ is ``fibrating and Bohr-
Sommerfeld", by which we mean the following.  The boundary $\partial X_c$ is always foliated by curves 
tangent to the kernel of the pull-back of the symplectic form.  The fibrating condition is that there exists
a manifold $S$ and a submersion $\pi: \partial X_c\to S$ whose fibers are the leaves of the null-foliation.
This is satisfied iff $X_c$ has a globally defining function whose Hamilton flow on $\partial X_c$
is periodic with a common minimal period.  The Bohr-Sommerfeld condition is that all leaves, $\gamma$, of
$\pi$ satisfy
\[
\int_\gamma \alpha \in 2\pi\bbZ,
\]
where $\alpha$ is the tautological one-form in $T^*M$.  
We will show that under these conditions there exist spaces $J^{\ell, m}$ of pseudodifferential operators with singular symbols naturally associated with $X_c$.  (Here $(\ell,m)$ is a bi-degree, 
to be explained later).  The frequency sets of their Schwartz kernels are contained in the union
of Lagrangian submanifolds of $T^*(M\times M)$
\[
\Delta_c' \cup \F',
\]
where
\[
\Delta_c = \{ (\x, \x)\in X_c\times X_c \}
\]
and $\F$ is the flow-out
\[
\F = \{ (\x, \y)\in \partial X_c\times \partial X_c\;;\; \x,\ \y\ \text{in the same leaf}\ \}.
\]
Intuitively speaking, the diagonal part, $\Delta_c$, is expected to be a part of any pseudodifferential operator calculus associated with $X_c$.  The flow-out part, $\F$, is there because the fibrating-and-Bohr-Sommerfeld conditions imply that there should be a significant part of Hilbert space associated with the symplectic reduction of the boundary of $X_c$. The two symbols, one on $\Delta_c$ and one on $\F$, have a compatibility condition that comes about most naturally in our setting.

In particular, the space $J^{-1/2, 1/2}$ is closed under composition, and it includes the projector $\Pi$ mentioned above, in the case of Zoll metrics.    The Bohr-Sommerfeld condition is needed for the existence of a global symbolic calculus, and goes along with having to restricting Planck's constant to take the values $\h = 1/N$, $N=1,2,\ldots $.
The fibrating condition is needed in order for $\F$ to be a closed submanifold of $X\times X$.

The Schwartz kernels are semiclassical analogues of the oscillatory integrals with singular symbols
of \cite{MU} and \cite{GU}, associated to a pair of intersecting conic Lagrangian submanifolds. See also \cite{Jo}, where a precise calculus for a more generalized class is discussed, and \cite{HaVa} where a connection with a class of Legendre distributions is explained.  In the conic case, the realization that if the Lagrangians are the diagonal and a flow-out one obtains a symbol calculus is due to the results in \cite{AUh}.  This is possible because
\[
\Delta_c\circ\F,\ \F\circ\Delta_c,\ \F \circ \F \subset\F\ ; \quad \Delta_c \circ\Delta_c\subset \Delta_c,
\]
so composing two operators with wave-front set in $\Delta_c \cup \F$ produces an operator
with wave-front set contained in the same union.  We believe that the present semiclassical setting provides a very natural expression for the Antoniano-Uhlmann algebra.

In the Zoll case, there have been numerous papers aimed at refining the general result (\ref{intro1}), mostly in terms of the reminder estimate.   For instance, in \cite{GuiOki3}, the second term in the Szeg\"o formula for Zoll manifolds is proved.  Other references in this direction are \cite{GuiOki2,Di,La1,La2,Wi1}. 

Here we are not focusing on remainder estimates, but on the fact that there is an operator algebra with a symbolic calculus that provides a quantization of $X_c$ and, among other things, a broader (symbolic) setting for Szeg\"o limit theorems. Furthermore, the existence of the operator algebra allows us to go farther in the analysis of the operators $\Pi\widehat Q\Pi$ mentioned above, with respect to previous works.

The issues we raise here are also connected with work of \cite{GuLe}, 
on the relationship between symplectic cutting and quantization, in the homogeneous (non-semiclassical) category.  They consider the case when $X_c = \phi^{-1}(-\infty, 0]$ where $\phi: X\to\bbR$ is the moment map for a homogeneous action of the circle group on $S^1$.   In this setting one can form the symplectic cut
\[
Y = X_c/\sim,
\]
Their work centers on the algebra of operators $\{\Pi\widehat Q \Pi \;;\; [\Pi, \widehat Q] = 0\}$ where
$\widehat Q$ ranges over (non-semiclassical) $\Psi$DOs on $M$ and $\Pi$ is a spectral projector, as above, 
associated to a quantization of the circle action by a Fourier integral operator.  Roughly speaking they show 
that such an algebra can be considered a quantization of the symplectic manifold $Y$.

\subsection{Main results}   We now describe our main results.

\begin{theorem}\label{Main}  Let $X_{c}\subset T^{*}M$ be as above ($\partial X_{c}$ ``fibrating and Bohr-Sommerfeld'').  
Then there exist vector spaces of semiclassical pseudodifferential operators with singular symbols,
$J^{\ell, m}( M\times M ;\Delta,\F)$, where $\h$ is restricted to the sequence 
$1/N, N=1,2,\ldots$, such that:
\begin{enumerate}
\item The frequency set of the Schwartz kernel of any operator in the algebra is contained in the union
$\Delta'\cup\F '$.
\item Let $\Sigma =\Delta\cap \F$. Then $J^{\ell,m}$ are microlocally Lagrangian states of order ${\ell+m}$ on ${\Delta' \setminus \Sigma'}$ and ${\ell}$ on $\F' \setminus \Sigma'$, and therefore there are symbol maps:
\[
\begin{array}{ll}
\sigma_0: J\to |\wedge|^{1/2}(\Delta\setminus\Sigma), & \sigma_1: J\to |\wedge|^{1/2}(\F\setminus\Sigma)
\end{array}
\]
(where $|\wedge|^{1/2}$ denotes the space of half-densities, and we will ignore Maslov factors).
There is in fact a symbolic calculus, that will be described below.
\item $J^{\ell,m}\circ J^{\tilde\ell,\tilde m}\subset J^{{\ell+\tilde\ell+1/2},{m+\tilde m-1/2}}$, in particular
$J^{-1/2,1/2}$ is an algebra. 
\item Assume that $X_c$ is compact.  Then every 
\[
\widehat{A}\in J^{-1/2,1/2}( M\times M;\Delta,\F)
\]
with microsupport contained in $X_c$ is smoothing and
\[
\tr(\widehat{A}) = (2\pi)^{{-n}}\ \hbar^{-n}\,\int_{X_c} \sigma_0(\widehat{A})\, \frac{\omega^n}{n!} + O(\hbar^{-n+1}\log(1/\h))
\]
where $\omega$ is the symplectic form of $T^*M$ and $n$ is the dimension of ${M}$.
\end{enumerate}
\end{theorem}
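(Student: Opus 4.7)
The plan is to realize $J^{\ell,m}(M\times M;\Delta,\F)$ as a semiclassical analogue of the singular-symbol class of \cite{MU} and \cite{GU} for an intersecting pair of Lagrangians, tailored to the geometry of the fibrating-and-Bohr-Sommerfeld boundary $\partial X_c$. Locally near a point of $\Sigma=\Delta\cap\F$ I would write the Schwartz kernel of an element of $J^{\ell,m}$ as an oscillatory integral whose phase function has two distinguished groups of fibre variables, one cutting out $\Delta$ and a single additional variable $\tau$ cutting out $\F$; part (1) is then immediate from this normal form, and part (2) follows by restricting to the smooth pieces $\Delta\setminus\Sigma$ and $\F\setminus\Sigma$ and reading off the usual Lagrangian principal symbol as a half-density. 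The Bohr-Sommerfeld condition together with the restriction $\h=1/N$ enters exactly here, as the cohomological obstruction to gluing the local symbols on $\F$ into a single-valued half-density, and it simultaneously pins down the compatibility condition that $\sigma_0|_\Sigma$ and $\sigma_1|_\Sigma$ must satisfy.

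For part (3) I would first verify the set-theoretic inclusions $\Delta\circ\Delta\subset\Delta$ and $\Delta\circ\F,\,\F\circ\Delta,\,\F\circ\F\subset\F$ directly from the definition of $\F$ as pairs of points in the same leaf, which reduces to composing Hamilton flows of a defining function of $X_c$. Away from $\Sigma$ composition is by transversal composition of Lagrangian distributions. Near $\Sigma$ the bidegree shift is extracted by stationary phase in the composition integral: each composition against the diagonal factor contributes $+1/2$ to the order on $\Delta$ and $-1/2$ to the order on $\F$, the semiclassical version of the Antoniano-Uhlmann shift from \cite{AUh}. The net bidegree $(\ell+\tilde\ell+1/2,\,m+\tilde m-1/2)$ then follows, and in particular $J^{-1/2,1/2}$ is closed under composition.

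Part (4) is the substantive analytic assertion. Given $\widehat A\in J^{-1/2,1/2}$ with microsupport in $X_c$, I would choose a microlocal cutoff $\chi_{\text{int}}+\chi_{\text{bdy}}\equiv 1$ with $\chi_{\text{int}}$ supported in the interior of $X_c$ and $\chi_{\text{bdy}}$ in a thin collar of $\partial X_c$. On the interior piece $\widehat A$ is a standard semiclassical pseudodifferential operator of order $\ell+m=0$, and the usual Weyl trace gives $(2\pi)^{-n}\h^{-n}\int\sigma_0(\widehat A)\,\chi^2_{\text{int}}\,\omega^n/n!+O(\h^{-n+1})$. For the boundary piece I would split the Schwartz kernel into its $\Delta$-supported and $\F$-supported parts via the local normal form near $\Sigma$: the $\Delta$-supported part contributes the remaining Weyl integral over the collar, with the same $(2\pi)^{-n}\h^{-n}$-leading behavior. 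The $\F$-supported part is the delicate term: its Lagrangian normal form, restricted to the diagonal, is an oscillatory sum indexed by the return times $t=kT$, $k\in\bbZ$, of the periodic flow on $\partial X_c$; stationary phase at each $k\neq 0$ bounds the individual contribution by $O(\h^{-n+1})$, and the effective number of non-negligible terms is $O(\log(1/\h))$, which produces the stated remainder.

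The main obstacle I anticipate is the stationary-phase treatment of the $\F$-supported trace near $\Sigma$: the symbol $\sigma_1$ is singular there, so direct stationary phase does not apply, and one has to invoke the compatibility condition from part (2) to see that the apparent divergences cancel in the trace integral. A second delicate input is the Bohr-Sommerfeld condition combined with $\h=1/N$, which aligns the phases at successive return times so that the Poisson-type sum over $k$ is controlled by $\log(1/\h)$ rather than diverging. It is precisely the interplay of these two ingredients with the global geometric hypotheses on $\partial X_c$ that furnishes the genuine content of the theorem.
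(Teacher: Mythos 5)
Your handling of parts (1)--(3) is broadly in line with the paper's construction (the paper makes the symbol calculus rigorous via the $\h$-transform, which turns kernels in $J^{\ell,m}$ into homogeneous Guillemin--Uhlmann distributions on $M\times S^1\times M\times S^1$ associated to the homogenized pair $\widetilde\Delta,\widetilde{\F}$; that is precisely where the Bohr--Sommerfeld condition and the restriction $\h=1/N$ enter, much as you anticipate, and the composition argument is the Antoniano--Uhlmann stationary-phase scheme you describe). The genuine gap is in part (4), in two places. First, microlocally at $\Sigma$ you cannot ``split the Schwartz kernel into its $\Delta$-supported and $\F$-supported parts'': the whole point of the class $J^{\ell,m}$ is that near the intersection the kernel is a single hybrid object, an oscillatory integral whose amplitude is a classical symbol in the extra variable $\sigma$, and it is \emph{not} a sum of a Lagrangian state on $\Delta$ and one on $\F$ there. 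The trace must be computed directly on this hybrid normal form.

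Second, and more seriously, your proposed mechanism for the $O(\h^{-n+1}\log(1/\h))$ remainder --- an oscillatory sum over return times $t=kT$ of the periodic flow with ``effectively $O(\log(1/\h))$ non-negligible terms,'' phases aligned by Bohr--Sommerfeld, and cancellations enforced by the symbolic compatibility condition --- is not the correct source of the logarithm and is unsubstantiated. In the model case the paper inserts a cutoff $\rho(\h\sigma)$ (harmless modulo $O(\h^\infty)$), rescales $\mu=\h\sigma$, and shows that the trace equals
\[
\frac{2\pi}{(2\pi\h)^{n}}\int a\bigl(0,x,x,(\mu,p'),\h^{-1}\mu\bigr)\,dx\,dp'\,d\mu \;+\;O(\h^{-n+1}).
\]
The logarithm then arises purely locally in the momentum variable conormal to $\partial X_c$: replacing the degree-zero symbol $a$ by its homogeneous principal part costs an error bounded by $C\,(1+\h^{-1}|\mu|)^{-1}$, and $\int_0^{\mu_0}(\h+\mu)^{-1}d\mu=O(\log(1/\h))$. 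There is no Poisson-type summation over iterates of the flow, no phase alignment, and no cancellation coming from the compatibility of $\sigma_0$ and $\sigma_1$; the Bohr--Sommerfeld condition does its work in the global symbol calculus of parts (1)--(3), not in the trace estimate, which is a local model-case computation at the transition scale $|P|\sim\h$ across $\partial X_c$. Without replacing your return-time heuristic by an analysis of this transition region, the error estimate --- the substantive analytic content of part (4) --- is not established.
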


The manifold $X_{c}$ has an associated operator algebra, $\mathcal{A}_{X_c}$, which consists of elements in the algebra 
$J\left( M\times M; \Delta, \partial X_c \right)$ which are microlocally of order O($\h^\infty$) in the complement $T^*M\setminus X_c$.


\begin{theorem}\label{Main2}  (See \S \ref{ProjectorExistence} for details.)
Assume $\partial X_c$ is compact and of contact type.
Then there exist orthogonal projections $\Pi\in J^{-1/2,1/2}$ whose symbol $\sigma_{0}$ is the characteristic
function of $X_{c}$.  Moreover,
for any zeroth-order pseudodifferential operator on $M$, $\widehat{Q}$, the ``cut" operator 
$ \Pi \widehat{Q} \Pi$
is in the algebra, and $\sigma_0$ can be identified with $Q|_{X_c}$.
\end{theorem}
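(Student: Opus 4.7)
The plan is to realize $\Pi$ as a spectral projector of a suitable self-adjoint semiclassical pseudodifferential operator, and then invoke the algebra property of Theorem~\ref{Main}(3) to handle the compressions $\Pi\widehat{Q}\Pi$.  Using the contact-type assumption together with the fibrating-and-Bohr--Sommerfeld conditions on $\partial X_c$, I would first construct a self-adjoint $\hbar$-pseudodifferential operator $\widehat{P}$ on $M$ whose principal symbol $p$ is a global defining function for $X_c$ (with $p<0$ on the interior), such that the Hamilton flow of $p$ is $2\pi$-periodic on $\partial X_c$.  A standard semiclassical adjustment of the subprincipal symbol, combined with the hypothesis $\int_\gamma\alpha\in 2\pi\bbZ$, should arrange that for $\hbar=1/N$ the spectrum of $\widehat{P}$ in a neighborhood of $0$ lies in $\hbar\bbZ + O(\hbar^\infty)$.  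The candidate orthogonal projector is then
\[
\Pi := \mathbf{1}_{(-\infty,\,0]}(\widehat{P}),
\]
which is manifestly self-adjoint and idempotent.

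To show $\Pi\in J^{-1/2,1/2}$ and to identify its two symbols, I would express $\Pi$ through the semiclassical propagator $U(t)=e^{-it\widehat{P}/\hbar}$.  Thanks to the spectral clustering, one has (modulo $O(\hbar^\infty)$)
\[
\Pi \equiv \sum_{k\leq 0}\frac{1}{2\pi}\int_{-\pi}^{\pi} U(t)\,e^{ikt}\,dt .
\]
The Schwartz kernel of $U(t)$ is a semiclassical Fourier integral operator whose canonical relation is the graph of the time-$t$ Hamilton flow of $p$.  A stationary-phase analysis in $t$ then splits the contribution into two pieces: the part near $t=0$ yields a Lagrangian distribution microlocally supported on $\Delta'$ with principal symbol equal to the characteristic function $\chi_{X_c}$, while the coherent constructive interference at nonzero $t\in 2\pi\bbZ$, where the null-foliation flow on $\partial X_c$ returns, produces a Lagrangian piece on the flow-out $\F'$.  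The bidegree $(-1/2,\,1/2)$ is read off from the standard normalization of such oscillatory integrals, and the symbol compatibility across $\Sigma=\Delta\cap\F$ is automatic because both contributions come from the same propagator construction.

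The main obstacle is this last structural step: showing that the near-$t=0$ and the nonzero-periodic contributions assemble into a single element of the singular class $J^{-1/2,1/2}$ with matching principal and flow-out symbols across $\Sigma$.  This is the semiclassical analogue of the cluster analysis underlying the Szeg\H{o} and Guillemin--Okikiolu refinements for Zoll operators, and this is precisely where the Bohr--Sommerfeld hypothesis becomes indispensable: it guarantees that the return phases add coherently into an object of the form prescribed by the symbol calculus of Theorem~\ref{Main}.  The contact-type hypothesis is what supplies, at the outset, a clean Hamiltonian $p$ with globally periodic flow on $\partial X_c$.

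Finally, for the compressions $\Pi\widehat{Q}\Pi$: any zeroth-order pseudodifferential operator on $M$ embeds into $J$ as a Lagrangian distribution supported on $\Delta'$ alone, with trivial flow-out part.  The composition rule $J^{\ell,m}\circ J^{\tilde\ell,\tilde m}\subset J^{\ell+\tilde\ell+1/2,\,m+\tilde m-1/2}$ of Theorem~\ref{Main}(3) then places $\Pi\widehat{Q}\Pi$ in the algebra, and the multiplicativity of $\sigma_0$ on $\Delta\setminus\Sigma$ yields
\[
\sigma_0\bigl(\Pi\widehat{Q}\Pi\bigr) \;=\; \chi_{X_c}\cdot Q\cdot \chi_{X_c},
\]
which, after restriction to $X_c$, is identified with $Q|_{X_c}$ as claimed.
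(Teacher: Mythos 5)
Your setup (a defining function $P$ with $2\pi$-periodic flow on $\partial X_c$ supplied by the contact-type hypothesis, a subprincipal/lower-order correction so that for $\h=1/N$ the spectrum near $0$ clusters on $\h\bbZ$ via Bohr--Sommerfeld, and $\Pi$ defined as the sharp spectral projector) coincides with the paper's construction in \S\ref{ProjectorExistence}, and your treatment of $\Pi\widehat Q\Pi$ via the composition law of Theorem \ref{Main}(3) is also how the paper concludes. But the core assertion of the theorem is precisely that $\Pi\in J^{-1/2,1/2}$, and this is where your argument has a genuine gap --- one you acknowledge yourself by calling the assembly of the $t\approx 0$ and return-time contributions into a single element of the singular class ``the main obstacle.'' Membership in $J^{-1/2,1/2}$ is not something one can read off from a stationary-phase splitting of $\sum_k\int U(t)e^{ikt}\,dt$ into ``a diagonal piece with symbol $\chi_{X_c}$'' and ``a flow-out piece'': $\chi_{X_c}$ is not an admissible $\Psi$DO symbol, the two contributions are not separately Lagrangian near $\Sigma=\Delta\cap\F$, and the class is defined (Definitions \ref{def:AlgModelCase} and \ref{def:JGenCase}) by exhibiting the kernel, microlocally near $\Sigma$, as an oscillatory integral with a hybrid amplitude after conjugation to the model. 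Asserting that Bohr--Sommerfeld ``guarantees the return phases add coherently into an object of the form prescribed by the symbol calculus'' restates the theorem rather than proving it.

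The missing idea in the paper's proof of Theorem \ref{thm:ProjInAlg} is the microlocal reduction to an explicit model: one splits $\Pi=(1-\rho)\chi(\widehat P_2)+\rho\,\chi(\widehat P_2)$ (the first summand is an ordinary $\h$-$\Psi$DO, which also repairs a flaw in your Fourier-coefficient formula for $\Pi$ --- the clustering on $\h\bbZ$ holds only in an energy window around $0$, so without the cutoff $\rho(\widehat P_2)$ that formula is not valid modulo $O(\h^\infty)$ on the interior spectrum), and then conjugates the boundary piece by an $S^1$-equivariant semiclassical FIO to the model $\widehat P^n=\h D_\theta$ on $\bbR^{n-1}\times S^1$. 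There the projector has the closed form $\Pi^E\widehat Q=\frac{1}{2\pi}\int_0^{2\pi}e^{-iNs\widehat P^n}e^{iNsE}\widehat Q\,\frac{ds}{1-e^{is}}$, whose kernel \eqref{KernelPiQ} manifestly has the conormal factor $\frac{1}{1-e^{is}}$ and hence the hybrid amplitude required by the definition of $J^{-1/2,1/2}$; the symbols $\sigma_0=\chi_{X_c}Q$ and $\sigma_1=\frac{1}{\sqrt{2\pi}}\,\frac{Q(\Phi_s^P\cdot)}{1-e^{is}}$ are then read off directly, and the compatibility at $\Sigma$ comes for free. Your propagator formula is in fact the global analogue of this model computation, so the strategy is salvageable, but as written the decisive verification --- that the resulting kernel lies in the class $J^{-1/2,1/2}$ --- is deferred rather than carried out, and without the equivariant normal-form reduction (or an equivalent explicit local form of the kernel near $\Sigma$) the proof is incomplete.
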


\medskip
As an immediate corollary we obtain the following Szeg\"o limit theorem:
\begin{corollary}\label{Szgo}
Assume that $X_c$ is compact, $\Pi\in J^{-1/2,1/2}$ an orthogonal projector as in the previous theorem 
and let $\h = 1/N$.  Then for any integer $m\geq 0$
\[
\tr(\Pi\widehat{Q} \Pi)^m = (2\pi)^{{-n}}\ N^{n}\,\int_{X_c} Q^m\, \frac{\omega^n}{n!} + O(N^{n-1}\log(N))
\]
\end{corollary}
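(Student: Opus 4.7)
The plan is to recognize $\widehat{A}:=(\Pi\widehat{Q}\Pi)^m$ as an element of $J^{-1/2,1/2}(M\times M;\Delta,\F)$ that is microsupported in $X_c$, to compute its diagonal principal symbol, and then to apply directly the trace formula of part~(4) of Theorem~\ref{Main} with $\h=1/N$.

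First I would verify membership. Theorem~\ref{Main2} asserts $\Pi\widehat{Q}\Pi\in J^{-1/2,1/2}$, and specializing part~(3) of Theorem~\ref{Main} at $(\ell,m)=(\tilde\ell,\tilde m)=(-1/2,1/2)$ gives $J^{-1/2,1/2}\circ J^{-1/2,1/2}\subset J^{-1/2,1/2}$, so induction on $m$ puts $\widehat{A}$ in the algebra. The microsupport of $\widehat{A}$ lies in $X_c$: since $\sigma_0(\Pi)=\chi_{X_c}$, the operator $\Pi\widehat{Q}\Pi$ is microlocally $O(\h^\infty)$ on $T^*M\setminus X_c$, and any composition inherits this property. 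Next I would compute $\sigma_0(\widehat{A})$ using multiplicativity of the diagonal principal symbol under composition, which is part of the symbolic calculus announced in part~(2) of Theorem~\ref{Main}; combined with $\sigma_0(\Pi\widehat{Q}\Pi)=Q|_{X_c}$ this gives, by induction,
\[
\sigma_0\bigl((\Pi\widehat{Q}\Pi)^m\bigr)=\bigl(Q|_{X_c}\bigr)^m=Q^m|_{X_c}.
\]
Substituting into the trace formula of Theorem~\ref{Main}(4) with $\h=1/N$ produces the claimed asymptotic.

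The only nontrivial ingredient, and the one I expect to be the main obstacle, is the multiplicativity of $\sigma_0$ under composition in this singular-symbol setting. For an ordinary semiclassical $\Psi$DO calculus this would be routine, but here operators carry a second symbol $\sigma_1$ living on the flow-out $\F$ and the two symbols are coupled near $\Sigma=\Delta\cap\F$. One needs the principal part of a composition on $\Delta\setminus\Sigma$ to equal the pointwise product of the diagonal symbols, which is precisely the compatibility ensured by the symbolic calculus developed in the body of the paper. Once this is established, the Szeg\"o theorem follows with no further work.
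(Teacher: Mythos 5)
Your argument is correct and is essentially the paper's own derivation: the corollary is meant to follow by combining Theorem \ref{Main2} with parts (3)--(4) of Theorem \ref{Main}, i.e.\ stability of $J^{-1/2,1/2}$ under composition, multiplicativity of $\sigma_0$ on the diagonal (Propositions \ref{CompoModelCase} and \ref{prop:SymbCompoModelCase}), and the trace formula of Theorem \ref{Trace1}, exactly as you propose (the body's Corollary \ref{cor:SzgoGeneral} proves the more general statement for smooth $f$ via the functional calculus $\Pi f(\Pi\widehat Q\Pi)$, but for integer powers the two routes coincide). One small point: the microsupport condition should be justified not by ``$\sigma_0(\Pi)=\chi_{X_c}$'' (vanishing of the principal symbol alone does not give $O(\h^\infty)$) but by the construction of $\Pi$ as a spectral projector, whose kernel representation \eqref{KernelPiQ} shows it lies in $\mathcal{A}_{X_c}$, so that compositions with it remain microlocally negligible outside $X_c$.
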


Finally, we have a result on the propagator $\Pi e^{-it\h^{-1} \Pi \widehat Q \Pi}$ where 
the symbol, $Q$, of the pseudodifferential operator $\widehat Q$ preserves $X_c$ ``to second
order":

\begin{theorem}
\label{Main3}
Suppose $\widehat Q$ is a zeroth-order  semiclassical pseudodifferential operator satisfying the conditions of Lemma \ref{Lemma:QHypothesis}. Assume $sub \widehat Q(\h)=0$. Then
\[
\Pi e^{-it\h^{-1} \Pi \widehat Q \Pi}  \in J^{-1/2,1/2}\left( M\times M ; \Delta(t), \F (t) \right),
\]
where 
\[
\Delta(t)=\left\{ \left( \x,\y \right) \big | \x, \y \in T^{*}(M\times M), \x=\Phi_t^Q (\y) \right\}
\]
\[
\F(t)=\left\{ \left( \x,\y \right) \big | \x,\y \in \partial X_c,~ \exists s \in \mathbb{R} \textrm{ such that } \x=\Phi_s^P \Phi_t^Q (\y) \right\}.
\]
\end{theorem}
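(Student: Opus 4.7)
The approach is an ansatz/iteration scheme modeled on Egorov's theorem, adapted to the singular-symbol calculus of Theorem \ref{Main}. The plan is to construct an approximate propagator $V(t) \in J^{-1/2,1/2}(M\times M;\Delta(t),\F(t))$ and then invoke uniqueness of solutions to the ODE $i\h\,\partial_t U = \Pi\widehat Q\Pi\, U$ with $U(0)=\Pi$ to conclude $U(t) = V(t)$ modulo $O(\h^\infty)$.

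The first step is geometric. The hypotheses on $\widehat Q$ from Lemma \ref{Lemma:QHypothesis} guarantee that $\Phi^Q_t$ preserves $\partial X_c$ and, up to reparametrization, preserves the $P$-null foliation; consequently $\F(t)$ is a smooth Lagrangian in $T^*(M\times M)$ that meets $\Delta(t)$ cleanly along $\Sigma(t) := \Delta(t)\cap(\partial X_c\times\partial X_c)$. I would verify that the four compositions $\Delta\circ\Delta(t)$, $\Delta\circ\F(t)$, $\F\circ\Delta(t)$, and $\F\circ\F(t)$ all lie in $\Delta(t)\cup\F(t)$; for instance, $\F\circ\Delta(t)=\F(t)$ because the $P$-orbit through $\Phi^Q_t(y)$ is precisely $\Phi^P_s\Phi^Q_t(y)$. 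This guarantees that $\Pi\widehat Q\Pi\cdot V(t)$ stays in $J^{-1/2,1/2}\bigl(\Delta(t),\F(t)\bigr)$, so the evolution equation closes on the ansatz space.

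Next I would translate the ODE into transport equations on the two principal symbols using the calculus of Theorem \ref{Main}. On $\Delta(t)\setminus\Sigma(t)$ this is the usual Egorov transport along $H_Q$, with initial value $\sigma_0(0)=\chi_{X_c}$ from Theorem \ref{Main2}; since $\Phi^Q_t$ preserves $X_c$ the solution is again $\chi_{X_c}$ (pulled back to $\Delta(t)$). On $\F(t)\setminus\Sigma(t)$ the symbol satisfies a first-order transport equation along the combined $(H_Q,H_P)$ flow with initial datum $\sigma_1(\Pi)$; the assumption $\mathrm{sub}\,\widehat Q(\h)=0$ eliminates subprincipal contributions, so the leading symbols propagate purely along the classical flows. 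Iterating in the standard way (solving a transport equation at each order of $\h$ to kill the remaining error, then Borel-summing in $\h$) produces $V(t)\in J^{-1/2,1/2}$ with $(i\h\,\partial_t - \Pi\widehat Q\Pi)V(t) = O(\h^\infty)$ and $V(0)-\Pi = O(\h^\infty)$. The unitarity of $e^{-it\h^{-1}\Pi\widehat Q\Pi}$ on the range of $\Pi$ then upgrades this via a Duhamel/Gronwall estimate to $U(t)-V(t)=O(\h^\infty)$.

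The main obstacle I expect lies in the symbolic transport step: arranging the compatibility condition between $\sigma_0(t)$ and $\sigma_1(t)$ at $\Sigma(t)$ that is part of the definition of the singular-symbol class, and checking that the transport equations preserve this compatibility for all $t$. One must also confirm that the leafwise parameter $s$ in the definition of $\F(t)$ is globally well-posed modulo the period, so that the Bohr-Sommerfeld condition still selects an honest $J$-class. Once these compatibilities are in place, the remaining iteration and ODE uniqueness steps follow the familiar Egorov pattern.
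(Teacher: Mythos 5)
Your proposal takes a genuinely different route from the paper. You attempt a direct WKB-type construction of the propagator as an element of the \emph{moving} class $J^{-1/2,1/2}(M\times M;\Delta(t),\F(t))$, writing transport equations on the two singular symbols and iterating. The paper instead first factors out the free evolution: it sets $W(t):=\Pi\,e^{-it\h^{-1}\Pi\widehat Q\Pi}\,e^{it\h^{-1}\widehat Q}$, shows that $W(t)$ stays in the \emph{fixed} class $J^{-1/2,1/2}(M\times M;\Delta,\F)$ by solving
\[
\h D_t W(t)+\bigl[\widehat Q,W(t)\bigr]+\bigl[\Pi,\widehat Q\bigr]W(t)=0,\qquad W(0)=\Pi,
\]
iteratively, and then recovers $\Pi\,e^{-it\h^{-1}\Pi\widehat Q\Pi}=W(t)\,e^{-it\h^{-1}\widehat Q}$ and applies a composition result (a variant of Proposition 4.1 of \cite{GU}) to move to $(\Delta(t),\F(t))$. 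This conjugation trick buys three things your approach has to pay for directly: it sidesteps the $t$-derivative of a family of distributions living on moving Lagrangians, it keeps the symbol compatibility condition at $\Sigma$ fixed in $t$ (exactly the obstacle you flag as the main difficulty and do not resolve), and it isolates where the hypotheses enter, since the iteration is driven by Proposition~\ref{prop:QVComm} (the commutator $[\widehat Q,\widetilde V]$ for $\widetilde V\in\text{sc-}I^{-1/2}(\F)$) and Proposition~\ref{Prop:CommutatorPiQ} (which uses the second-order tangency of $\Phi^Q$ and the vanishing subprincipal symbol to get $[\Pi,\widehat Q]\in\text{sc-}I^{-5/2}(\F)$ rather than merely $\text{sc-}I^{-3/2}$). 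In your scheme those hypotheses would also be what makes the transport equations close, but you have not actually exhibited that mechanism. Your geometric checks of $\Delta\circ\Delta(t),\ \Delta\circ\F(t),\ \F\circ\Delta(t),\ \F\circ\F(t)\subset\Delta(t)\cup\F(t)$ are correct and are indeed essential for the composition in the last step of the paper's argument as well; but on their own they do not carry you past the symbolic compatibility at $\Sigma(t)$, which is the part your proposal acknowledges but leaves open.
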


\bigskip\noindent
{\bf Remarks:}  
\begin{list}{\labelitemi}{\leftmargin=1em}
\item[(1)] The result on the trace (part (4) of the Theorem \ref{Main}) extends to the spaces $J^{\ell,m}$ provided $m \geq -1/2$.  If $m$ is sufficiently negative, the leading contribution to the trace comes from the asymptotic singularity of the kernel of the operator at $\Sigma$.
\item[(2)] The symbol calculus for $\sigma_0$ is just the usual pseudodifferential calculus.  The symbol calculus for $\sigma_1$ is more complicated (and non-commutative).  In fact $\sigma_1$ comes with an extension to a distribution on $\F$ conormal to $\Sigma$, and there exists a formula for the smooth part of the $\sigma_1$ of the composition in terms of the corresponding extensions of the factors.  
\item[(3)]  The restriction that $\h=1/N$ is necessary to have a well-defined global symbol on the flow-out $\F$, see \cite{GeoA} chapter VII, \S 0.  Locally elements of the $J^{\ell, m}$ are given by semiclassical oscillatory integrals with $\h\to 0$ continuously.
\item[(4)] The error estimate $O(\h\log(1/\h)$) is sharp for the class $J^{-1/2,1/2}$, but in Corollary \ref{Szgo} the error should be $O(\h^2)$, see Remark \ref{NotaError}.
\item[(5)] The idea that the image of $\Pi$ quantizes $X_c$ and that the operators $\Pi \widehat{Q} \Pi$ can be considered as associated observables appeared first in the physics literature, see \cite{BP1} \S II E and \cite{BP2} (where a connection with symplectic cutting is also made).  The operators
$\Pi \widehat{Q} \Pi$ do not form an algebra, however, while the class $J^{-1/2,1/2}$ does.
\item[(6)] Our work also implies that the results of \cite{GuLe} 
also hold in the semiclassical case. 
\end{list}

\medskip
The paper is organized as follows. 
In Section \ref{sec:GuiUhlOperators}, the spaces $J^{\ell,m}$ are defined in a model case by oscillatory 
integrals with amplitudes having expansions in $\h$ with coefficients that are classical symbols. In Section
\ref{sec:GeneralCase}, the spaces are defined globally on manifolds, and the existence of a symbolic 
calculus is established. The principal symbol on each Lagrangian blows-up at the intersection, and they 
satisfy a \emph{symbolic} compatibility condition there. The theorem on the trace is proved in Section \ref
{sec:AsympTrace}. Section \ref{sec:Projector} considers cases where the algebra admits projectors,  
yielding a symbolic proof of a generalized Szeg\"o limit Theorem. Section \ref{sec:Propagator} studies the
propagator of certain elements in the algebra, and proves an Egorov-type theorem. Finally, 
Section \ref{sec:SCNA} shows numerically a surprising phenomenon of propagation of coherent states
in situations not considered in Section \ref{sec:Propagator}.



\section{The model case}

\label{sec:GuiUhlOperators}

\subsection{Definitions}
\label{sec:ModelCase}

We begin by discussing the microlocal model case: $M=\bbR^n$ and 
$X_c^n=\left\{ p_1 \ge 0 \right\}$, where $(x_1,\ldots ,x_n, p_1,\ldots, p_n)$ are canonical
coordinates in $T^*\bbR^n$.   Let $\Fn$ be the flow-out of $\partial X_c^n=\left\{ p_1=0\right\}$. We will use this case to define
operators $J^{\ell,m}(\bbR^n \times \bbR^n;\Delta^n,\Fn)$, where $\Delta^n\subset T^*\bbR^n\times T^*\bbR^n$ is the diagonal.

Roughly speaking, elements in $J^{\ell,m}(\bbR^n\times \bbR^n, \Delta^n, \Fn)$ will be defined as oscillatory integrals with amplitudes as follows:

\begin{enumerate}
\item We denote by $\mathcal{A}^{\ell,m}$ the class of all smooth functions $a(s,x,y,p,\sigma,\h)$ with compact support in $s,x,y,p$ such that, as $\h\to 0$ 
\[
a(s,x,y,p,\sigma,\h)\sim\sum_{j=-\ell}^\infty \h^{j} a_j (s,x,y,p,\sigma)
\]
(in a sense that will be explained below) where, for each 
$j$, $a_j (s,x,y,p,\sigma)$ is a polyhomogeneous classical symbol in $\sigma$ of degree $m$:
\[
a_j(s,x,y,p,\sigma) \sim \sum_{r=m}^{-\infty} a_{j,r}(s,x,y,p,\sigma),\quad \forall\lambda>0 \ 
a_{j,r}(s,x,y,p,\lambda\sigma) = \lambda^r a_{j,r}(s,x,y,p,\sigma).
\]
\item The operators in $J^{\ell,m}(\bbR^n\times \bbR^n; \Delta^n, \Fn)$ are those whose Schwartz kernels are of the form
\begin{equation}
\label{OscInt}
A(x,y,\hbar) = \frac{1}{(2\pi \hbar)^n}\int e^{\frac{i}{\h}\Bigl[(x_1-y_1-s)p_1 + (x'-y')p'\Bigr] + is\sigma}\ a(s,x,y,p,\sigma ,\h)\ ds\,dp\,d\sigma ,
\end{equation}
where we have split the variables: $x=(x_1,x'),\ y=(y_1,y') \ (x', y'\in\bbR^{n-1})$.
\end{enumerate}

We now give the details.
\begin{definition}
Let $(x,y)$ and $s$ be the standard coordinates on $\mathbb{R}^{2n}$ and $\mathbb{R}$ respectively, and let $p$, $\sigma$ be the dual coordinates to $x$, $s$. We denote by $z=(x,y,s)$ coordinates in 
$\mathbb{R}^{2n} \times \mathbb{R}$.  Define $\calA^{\ell, m}$ to be the space of smooth families 
$a(s,x,y,p,\sigma,\hbar)$ compactly supported in $z$, $p$ such that
\begin{equation}
\label{AmplitudeIneq}
\left| \left( \partial / \partial z \right)^{\alpha} \left( \partial / \partial p \right)^{\beta} \left( \partial / \partial \sigma \right)^{\gamma} a \right| \le C \hbar^{-\ell} (1+|\sigma |)^{m -|\gamma|},
\end{equation}
for some constant $C=C(\alpha,\beta,\gamma)$ and $\h \in (0,h_0 ]$ for some fixed $h_0 > 0$.
\end{definition}

Let $a_{j}(s,x,y,p,\sigma)\in S^{m}$ be a sequence of classical symbols in the $\sigma$ variable, of order $m$, and compactly supported in $(s,x,y,p)$, i.e., $a_j$ is smooth,
\[
\left| \left( \partial / \partial z \right)^{\alpha} \left( \partial / \partial p \right)^{\beta} \left( \partial / \partial \sigma \right)^{\gamma} a_{j}(s,x,y,p,\sigma) \right| \le C(\alpha,\beta,\gamma)\left( 1+|\sigma |\right)^{m-|\gamma|},
\]
for some constant $C=C(\alpha,\beta,\gamma)$, and there exists a sequence $a_{j,r}$ of smooth functions 
in 
$\sigma\neq 0$, homogeneous of degree $r$ in $\sigma$, such that $a_j \sim \sum_{r=m}^{-\infty} a_{j,r}$ 
in the standard sense.  Given $a\in\calA^{\ell,m}$,
we will say that $a\sim \sum_{j=-\ell}^{\infty}\hbar^{j}a_j$ iff for all integers $K\geq 0$
\[
a-\sum_{j=-\ell}^{-\ell+K}\hbar^{j} a_j  \in \calA^{\ell-K-1,m}.
\]

\begin{definition}
\label{def:AlgModelCase}
Denote by $\calA_{\text{classical}}^{\ell , m}\subset\calA^{\ell, m}$ the set of
all $a(s,x,y,p,\sigma,\hbar)$ that satisfy $a\sim \sum_{j=-\ell}^{\infty}\hbar^{j}a_j$, as above, and let $\Sigma^n=\Delta^n \cap \Fn$.
Define $J^{\ell,m}\left( \mathbb{R}^n\times \bbR^n;\Delta^n,\Fn \right)$ to be the set of kernels 
of the form
\[
A(x,y,\h) + F_1(x,y,\h)+F_2(x,y,\h)
\]
where $A(x,y,\h)$ is as in \eqref{OscInt} with $a\in \calA_{\text{classical}}^{\ell',m'}$ where 
\[
\ell'=\ell+1/2,\quad m'=m-1/2,
\]
and $F_1$, $F_2$ are the kernels of semiclassical Fourier integral operator in \\
$sc-I^{\ell+m}\left( \bbR^n \times \bbR^n ; \Delta^n \setminus \Sigma^n \right)$, $sc-I^{\ell} \left( \bbR^n \times \bbR^n ; \Fn \right)$, respectively.
\end{definition}

\begin{remark}
As we will see, it
is necessary that $F_1$ and $F_2$ appear in the definition in order for the classes $J$ to be closed
under composition.
\end{remark}

We have yet to give sense to the formula in equation \eqref{OscInt}. Denote by $D=1+D_{s}^2$ where $D_s=\frac{1}{i}\frac{\partial}{\partial s}$. Notice that $D e^{is\sigma}=(1+\sigma^2)e^{is\sigma}$. For $a\in A^{\ell' ,m'}_{\text{classical}}$, we define $A(x,y,\h)$ as
\begin{equation}\label{sense}
A(x,y,\h)=\frac{1}{(2\pi \h)^n} \int e^{ i \h^{-1} (x-y) p+is\sigma}\frac{ 1 }{(1+\sigma^2)^k}D^{k}\left( e^{-i\h^{-1}sp_1} a(s,x,y,p,\sigma,\h) \right) ds dp d\sigma .
\end{equation}
The integral above is absolutely convergent for $k >> 0$ large enough since 
$D^{k}\left( e^{-i\h^{-1}sp_1} a \right)$ is $O(\sigma^m)$.  
It can be easily checked that the definition above does not depend on $k$ using integration by parts.

Equivalently, one can define the integral in \eqref{OscInt} as an iterated integral, where one first 
integrates over the variables $(s,p)$ 
with respect to which the amplitude is compactly supported.  The resulting function of $\sigma$ is
rapidly decreasing and therefore integrable.  Both of these interpretations are useful in proofs.

\medskip

We now state the first property of the kernels we have just defined:

\begin{proposition}\label{FSetModelCase}
For any $A \in J^{\ell,m}\left( \mathbb{R}^{n}\times \bbR^n;\Delta^n,\Fn \right)$, the frequency set of $A$  is contained in the union of the Lagrangians $\Delta^n$ and $\Fn$:
\[
FS\left( A \right) \subset {\Delta^n}' \cup {\Fn}'.
\]
Moreover, away from the intersection $\Sigma=\Delta^n \cap \Fn$, $A$ is microlocally  in the space $\text{sc-} I^{\ell+m}\left( \bbR^n\times \bbR^n ; \Delta^n \setminus \Sigma^n \right)$ and $\text{sc-}I^{\ell}\left( \bbR^n \times \bbR^n ; \Fn \setminus \Sigma^n \right)$, where $\text{sc-}I$ denote the spaces of semiclassical Fourier integral operators.
\end{proposition}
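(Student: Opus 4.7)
The strategy is the standard one for oscillatory integrals with two-Lagrangian singular symbols: split the amplitude according to whether $\sigma$ is small or large and recognize each piece as an ordinary semiclassical Fourier integral operator associated with one of the two Lagrangians. The summands $F_1,F_2$ in Definition~\ref{def:AlgModelCase} are by hypothesis semiclassical FIOs with frequency sets in $(\Delta^n\setminus\Sigma^n)'$ and ${\Fn}'$ respectively, so the work is entirely in analyzing the oscillatory integral (\ref{OscInt}) with amplitude $a\in\calA_{\text{classical}}^{\ell',m'}$.

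Fix a cutoff $\chi\in C_c^\infty(\bbR)$ equal to $1$ near $0$ and write $a=\chi(\sigma)a+(1-\chi(\sigma))a=:a_{\text{low}}+a_{\text{high}}$, yielding $A=A_{\text{low}}+A_{\text{high}}$. For $A_{\text{low}}$ the $\sigma$-integration runs over a compact set and may be carried out first:
\[
A_{\text{low}}(x,y,\h)=\frac{1}{(2\pi\h)^n}\int e^{\frac{i}{\h}\left[(x_1-y_1-s)p_1+(x'-y')p'\right]}\tilde a_{\text{low}}(s,x,y,p,\h)\,ds\,dp,
\]
with $\tilde a_{\text{low}}$ a compactly supported classical semiclassical symbol. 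The phase is non-degenerate in the variables $(s,p)$ and its critical set $\{p_1=0,\,x'=y',\,s=x_1-y_1\}$ parameterizes exactly the flow-out $\Fn$, so $A_{\text{low}}$ is a semiclassical FIO on $\Fn$ and $\text{FS}(A_{\text{low}})\subset{\Fn}'$. For $A_{\text{high}}$, substitute $\tau=\h\sigma$:
\[
A_{\text{high}}(x,y,\h)=\frac{1}{2\pi(2\pi\h)^n}\int e^{\frac{i}{\h}\left[(x_1-y_1-s)p_1+(x'-y')p'+s\tau\right]}b(s,x,y,p,\tau,\h)\,ds\,dp\,d\tau,
\]
with $b(\cdot,\tau,\h)=(1-\chi)(\tau/\h)\,a(\cdot,\tau/\h,\h)$. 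On the support $|\tau|\ge\h$, inserting the $\h$-expansion $a\sim\sum_j\h^j a_j$ together with the polyhomogeneous $\sigma$-expansion of each $a_j$ and regrouping in powers of $\h$ (using that $a_{j,r}(\cdot,\tau/\h)=\h^{-r}a_{j,r}(\cdot,\tau)$ on the homogeneous pieces) produces a genuine classical semiclassical symbol in $(\tau,\h)$. The new phase is non-degenerate in the $n+2$ phase variables $(s,p,\tau)$, with critical set $\{s=0,\,x=y,\,\tau=p_1\}$ parameterizing the diagonal $\Delta^n$; hence $A_{\text{high}}$ is a semiclassical FIO on $\Delta^n$ and $\text{FS}(A_{\text{high}})\subset{\Delta^n}'$.

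Combining the two pieces yields $\text{FS}(A)\subset{\Delta^n}'\cup{\Fn}'$. Since $\Delta^n\cap\Fn=\Sigma^n$, at a point of $\Fn\setminus\Sigma^n$ one has $x_1\ne y_1$ and at a point of $\Delta^n\setminus\Sigma^n$ one has $p_1\ne 0$, so away from $\Sigma^n$ the two contributions decouple: only $A_{\text{low}}$ contributes microlocally on $\Fn\setminus\Sigma^n$ and only $A_{\text{high}}$ on $\Delta^n\setminus\Sigma^n$. A standard computation of the principal symbols through each parameterization yields orders $\ell$ on $\Fn\setminus\Sigma^n$ and $\ell+m$ on $\Delta^n\setminus\Sigma^n$; the half-integer shifts $\ell'=\ell+1/2$, $m'=m-1/2$ in Definition~\ref{def:AlgModelCase} are exactly what is needed to make these orders line up, the extra $1/2$ reflecting the one additional phase variable ($\tau$) in the $\Delta^n$-parameterization relative to the $\Fn$-parameterization. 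The main technical point — and hence the main obstacle — is the reorganization of the double asymptotic expansion of $a$ in $\sigma$ and in $\h$ into a single classical semiclassical symbol in $(\tau,\h)$ after the substitution $\sigma=\tau/\h$; the polyhomogeneity requirement on each $a_j$ in Definition~\ref{def:AlgModelCase} is precisely what makes this reorganization possible.
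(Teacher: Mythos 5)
Your treatment of $A_{\text{low}}$ is fine: after integrating out $\sigma$ over a compact set one has a compactly supported classical amplitude, and the phase $(x_1-y_1-s)p_1+(x'-y')p'$ in the variables $(s,p)$ is nondegenerate and parameterizes $\Fn$, so $FS(A_{\text{low}})\subset{\Fn}'$. The gap is in $A_{\text{high}}$. After the substitution $\tau=\h\sigma$ the amplitude $b(\cdot,\tau,\h)=(1-\chi)(\tau/\h)\,a(\cdot,\tau/\h,\h)$ is \emph{not} a classical semiclassical symbol: its support reaches down to $|\tau|\sim c\h$, and on that region each $\tau$-derivative costs a factor $\h^{-1}$ (already for the cutoff factor $(1-\chi)(\tau/\h)$), so $b$ is an exotic symbol of type $\delta=1$ there and neither stationary nor non-stationary phase applies. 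The regrouping via $a_{j,r}(\cdot,\tau/\h)=\h^{-r}a_{j,r}(\cdot,\tau)$ gives uniform symbol estimates only on $|\tau|\ge c>0$, since the homogeneous terms with $r<0$ blow up at $\tau=0$. In fact the claims that $A_{\text{high}}$ is an FIO associated with $\Delta^n$ and that $FS(A_{\text{high}})\subset{\Delta^n}'$ are false, not merely unproven: take $m'=0$ and $a=\rho(s,x,y,p)$ independent of $\sigma$; then $\int e^{is\sigma}(1-\chi(\sigma))\,d\sigma=2\pi\delta(s)-\widehat{\chi}(s)$, so $A_{\text{high}}$ is a pseudodifferential kernel minus the flow-out FIO with amplitude $\widehat{\chi}(s)\rho$, and since $\widehat{\chi}$ is Schwartz but not supported at $s=0$, $FS(A_{\text{high}})$ contains points of ${\Fn}'$ with $x_1\ne y_1$, far from $\Sigma$. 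This also breaks your decoupling step: on $\Fn\setminus\Sigma^n$ both pieces contribute (in the example they cancel, the full $A$ being pseudodifferential), so the microlocal structure on the flow-out cannot be read off from $A_{\text{low}}$ alone. (Minor: the prefactor after the change of variables should be $\h^{-1}(2\pi\h)^{-n}$, not $(2\pi)^{-1}(2\pi\h)^{-n}$, which matters for the order bookkeeping you leave implicit.)

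What your argument does establish is the statement away from $\Sigma$: for $|p_1|\ge c>0$ the rescaled amplitude is a genuine classical symbol and $A$ is microlocally in $\text{sc-}I^{\ell+m}(\Delta^n\setminus\Sigma^n)$, while near a flow-out point with $x_1\ne y_1$ the conormal factor $\int e^{is\sigma}a\,d\sigma$ is smooth in $s$ and $A$ is microlocally in $\text{sc-}I^{\ell}(\Fn\setminus\Sigma^n)$. What is missing is the global frequency-set containment, in particular near $\Sigma$, where the amplitude has the hybrid structure (conormal in $s$ at unit scale times semiclassical oscillation) that your cutoff in $\sigma$ does not tame. The paper handles exactly this point by a different route: it takes the $\h$-transform of the kernel, identifies it as a Guillemin--Uhlmann distribution associated with the homogenized pair $\bigl(\widetilde{\Delta^n},\widetilde{\Fn}\bigr)$ (Theorem \ref{SymbCalculus}), and then imports the conic results of \cite{GU} through the correspondence between the frequency set and the wave front set of the $\h$-transform (Proposition 2.4 of \cite{PU}). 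To salvage a direct proof you would need either to cut off in $s$ rather than in $\sigma$ and treat the transition region with second-microlocal care, or to follow the $\h$-transform reduction.
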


\begin{remark}
This is a consequence of Theorem \ref{SymbCalculus}, proved in Section \ref{hTransform}.
\end{remark}


\subsection{The symbol maps}

By the second part of Propositon \ref{FSetModelCase}, one has two symbol maps:
\begin{equation}
\begin{array}{ccl}
 &\sigma_0\nearrow  & |\wedge|^{1/2}(\Delta^n \setminus\Sigma)\\
 J^{\ell,m}\left( \mathbb{R}^{n} \times \bbR^n ;\Delta^n,\Fn \right) & \\
 & \sigma_1\searrow &  |\wedge|^{1/2}(\Fn \setminus\Sigma).
\end{array}
\end{equation}
It is easy to see that, for $A$ as above, they are given by the following formulae:

\begin{equation}\label{symbolsModelCase}
\begin{array}{crl}
\sigma_0(A) := 2\pi ~a_{-\ell',m'}(s,x,y,p,\sigma)\sqrt{dx dp}_{\big{|}_{y=x,s=0,p_1=\sigma}} \quad\text{and}\\ \\
\sigma_1(A) :=  \sqrt{2\pi} ~\int a_{-\ell'}(s,x,y,p,\sigma)e^{is\sigma}d\sigma \sqrt{dxdy_1 dp'}_{\big{|}_{y'=x',p_1=0,s=x_1-y_1}}.

\end{array}
\end{equation}
Notice that $\sigma_0$ has a singularity as $\sigma = p_1$ converges to zero, that is, as the point where $\sigma_0$ is evaluated tends to the intersection, $\Sigma$.  The same is true of $\sigma_1$, and the leading singularities of $\sigma_0$ and $\sigma_1$ are Fourier transforms of each other.  This is exactly as in 
\S 5 of \cite{GU}  , and (appropriately understood) will be true in the manifold case as well.

\begin{proposition}\label{ExactSeqModelCase}  One has the following exact sequence:
\[
0\to J^{\ell, m-1}\oplus J^{\ell-1, m} \to J^{\ell,m} \stackrel{\sigma_0}{\to} R^{m}(\Delta^n\setminus\Sigma)\to 0.
\]
\end{proposition}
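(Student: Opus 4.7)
The first map of the sequence is addition, $(A_1, A_2) \mapsto A_1 + A_2$, whose image is the subspace $J^{\ell, m-1} + J^{\ell-1, m}$ of $J^{\ell,m}$; exactness of the sequence then reduces to three assertions: (i) $\sigma_0$ vanishes on each summand $J^{\ell, m-1}$ and $J^{\ell-1, m}$; (ii) $\sigma_0$ is surjective onto $R^{m}(\Delta^{n}\setminus \Sigma^{n})$; and (iii) $\ker \sigma_0 \subset J^{\ell, m-1} + J^{\ell-1, m}$.

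Assertion (i) is immediate from the explicit formula for $\sigma_0$ in (\ref{symbolsModelCase}). If $A\in J^{\ell, m-1}$ then $A$ admits a classical amplitude whose polyhomogeneous expansion in $\sigma$ begins at degree $m-3/2$ rather than $m'$, so $a_{-\ell', m'}=0$; similarly for $A\in J^{\ell-1, m}$ the expansion in $\h$ begins at $\h^{-\ell'+1}$, killing the leading bi-homogeneous component. For assertion (ii), given $\tau \in R^{m}(\Delta^{n}\setminus\Sigma^{n})$ represented by an appropriate classical symbol $\tau_0(x, p)$ on the diagonal, I would build an amplitude $a\in\calA^{\ell', m'}_{\text{classical}}$ with vanishing higher-order terms and leading component
\[
a_{-\ell', m'}(s, x, y, p, \sigma) = (2\pi)^{-1}\,\widetilde{\tau}_0(x, p', \sigma),
\]
where $\widetilde{\tau}_0$ is a smooth extension of $\tau_0$ into the model phase space, polyhomogeneous of degree $m'$ in $\sigma$ with $\sigma$ playing the role of $p_1$. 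Direct evaluation of (\ref{symbolsModelCase}) confirms $\sigma_0(A)=\tau$.

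The main step is assertion (iii). Suppose $\sigma_0(A)=0$, so that $a_{-\ell', m'}$ vanishes on the submanifold $N=\{y=x,\ s=0,\ p_1=\sigma\}$. Since $N$ is cut out transversally by the functions $y-x$, $s$, and $p_1-\sigma$, Hadamard's lemma gives a decomposition
\[
a_{-\ell', m'} = \sum_{i=1}^{n} (y_i - x_i)\, b^i + s\, c + (p_1 - \sigma)\, d,
\]
with $b^i, c, d$ smooth and polyhomogeneous in $\sigma$ of degree $m'$. I would then integrate by parts in the oscillatory integral (\ref{OscInt}) using the identities $(y_i - x_i)\, e^{i(x-y)\cdot p/\h} = i\h\,\partial_{p_i} e^{i(x-y)\cdot p/\h}$ (producing an $\h$-gain and hence a contribution to $J^{\ell-1, m}$) and $s\, e^{is\sigma} = -i\,\partial_\sigma e^{is\sigma}$ (lowering the $\sigma$-degree by one and hence a contribution to $J^{\ell, m-1}$). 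For the residual term $(p_1-\sigma)d$, I would exploit the phase relation $\partial_s\Phi = \sigma - p_1/\h$: after inserting a semiclassical/conic cutoff separating the regime $|\h\sigma|\sim|p_1|$ from its complement, $(p_1-\sigma)d$ can, up to acceptable lower-order terms, be rewritten either as $-\h\,\partial_s\Phi\cdot d$ (yielding a factor of $\h$ after integration by parts in $s$, placing it in $J^{\ell-1, m}$) or as an $\h$-order correction absorbed into a lower amplitude class.

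Iterating the reduction on the subleading coefficients of the expansion of $a$ produces the claimed decomposition $A=A_1+A_2$ with $A_1\in J^{\ell, m-1}$ and $A_2\in J^{\ell-1, m}$, modulo residues that can be absorbed into the Fourier integral operator remainders $F_1, F_2$ from Definition \ref{def:AlgModelCase}. The main obstacle is the treatment of $(p_1-\sigma)d$: because $p_1$ is a semiclassical momentum while $\sigma$ is a classical frequency, the factor $p_1-\sigma$ is not uniformly scaled, and this is precisely where the compatibility between the two Lagrangian symbols at the intersection $\Sigma^{n}$ manifests itself; a clean treatment will likely require invoking the calculus of transverse intersecting-Lagrangian classes developed in \cite{GU} and \cite{AUh}.
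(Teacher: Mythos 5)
Your steps (i) and (ii) are fine in outline, but the heart of the proposition is your step (iii), and there the argument has a genuine gap at exactly the point you flag. The factor $p_1-\sigma$ appearing in your Hadamard decomposition is not adapted to the phase of \eqref{OscInt}: differentiating the exponent in $s$ produces $\sigma-p_1/\h$, so the quantity that can be traded for an integration by parts in $s$ is $p_1-\h\sigma$, not $p_1-\sigma$. Writing $(p_1-\sigma)d=(p_1-\h\sigma)d+(\h-1)\sigma d$, the first piece indeed gains a factor of $\h$, but the leftover $-\sigma d$ is again of full order ($\h^{-\ell'}$ and degree $m'$ in $\sigma$, since $d$ must have degree $m'-1$ for your decomposition to balance -- note also the degree bookkeeping slip in your statement that $d$ has degree $m'$). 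It is neither an ``$\h$-order correction'' nor absorbable into a lower amplitude class, and no cutoff in the region $|\h\sigma|\sim|p_1|$ changes this: on that very region $p_1-\sigma\approx-\sigma$ is large. So the proposed mechanism neither gains $\h$ nor lowers the $\sigma$-degree, and the membership of the residual term in $J^{\ell,m-1}+J^{\ell-1,m}$ is exactly what remains unproved. A second, related inaccuracy: since $a_{-\ell',m'}$ is only positively homogeneous in $\sigma$, the condition $\sigma_0(A)=0$ constrains the $\sigma>0$ branch only on $\{p_1\ge 0\}$ and the $\sigma<0$ branch only on $\{p_1\le 0\}$ (at $s=0$, $y=x$); so the clean vanishing ``on $N$'' that your Hadamard lemma uses has to be handled branch by branch, and the genuinely singular leftover is a term whose leading symbol is supported where the sign of $\sigma$ disagrees with the sign of $p_1$. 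The correct way to dispose of that leftover is not integration by parts at all, but the observation that the $s$- and $\sigma$-integrations localize at $\sigma\approx p_1/\h$, so that such a sign-mismatched term (with coefficient vanishing to infinite order at $p_1=0$) contributes $O(\h^\infty)$, or at worst a flow-out piece absorbable into $F_2$ of Definition \ref{def:AlgModelCase}. None of this is in your sketch, and you yourself defer it to \cite{GU,AUh}; as written, the kernel inclusion $\ker\sigma_0\subset J^{\ell,m-1}+J^{\ell-1,m}$ is not established. (A smaller fixable point: for $i=1$ the phase pairs $p_1$ with $x_1-y_1-s$, so $y_1-x_1=-(x_1-y_1-s)-s$ must be split before integrating by parts; the $s$-part then goes through $\partial_\sigma$ as in your treatment of $sc$.)

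Note also that your route is different from the paper's: the paper does not prove this proposition by direct manipulation of the oscillatory integrals, but reduces it, via the $\h$-transform of Theorem \ref{SymbCalculus}, to the homogeneous calculus of \cite{GU}, where the corresponding symbol exact sequence (and the class $R^m$) is already established. So you have two honest options: either carry out the direct semiclassical argument with the corrections above (Taylor expand only in $(s,y-x)$, use $\partial_{p}$ and $\partial_\sigma$ integrations by parts for those factors, and prove the negligibility of the sign-mismatched remainder), or follow the paper and quote the exact sequence of \cite{GU} after applying the $\h$-transform.
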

Here $R^{m}(\Delta^n\setminus\Sigma)$ is, roughly speaking, the space of smooth functions on 
$\Delta^n\setminus\Sigma$ that have singularities of degree $m$ at $\Sigma$ in the normal directions (for details we refer to \cite{GU}, \S 5 and \S 6).

\medskip
The classes of operators with kernels in the $J^{\ell, m}$ are closed under composition in the following sense:

\begin{proposition}\label{CompoModelCase}
The composition of properly supported operators with kernels in $J^{\ell, m}$ and $J^{\tilde\ell, \tilde m}$, respectively, is an operator with kernel in $J^{\ell+\tilde\ell+1/2, m+\tilde m-1/2}$.  In particular $J^{-1/2, 1/2}$ is an algebra:
\[
J^{-1/2, 1/2}\circ J^{-1/2, 1/2}\subset J^{-1/2, 1/2},
\]
and for any $u\in J^{\ell,m}$, $v\in J^{\tilde \ell,\tilde m}$ , we obtain (ignoring Maslov factors and half-densities):
\[
\sigma_0 (u \circ v)(\bar x, \bar x)=\sigma_0 (u) (\bar x,\bar x) \sigma_0 (\bar x, \bar x), ~~\sigma_1(\bar x, \bar y)=\frac{1}{\sqrt{2\pi}} \int \sigma_1 (u)(\bar x, \bar z(t)) \sigma_1 (\bar z(t), \bar y) d t,
\]
where $\bar z(t)$ is the bicharacteristic curve joining $\bar x$ and $\bar y$.
\end{proposition}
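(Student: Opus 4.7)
The plan is to compute the Schwartz kernel of $C=A\circ B$ as an iterated oscillatory integral, perform the inner integrations to reduce it to the model form \eqref{OscInt}, and then read off the amplitude class and the symbol formulas from \eqref{symbolsModelCase}. First I would decompose $A = A_0 + F_1^A + F_2^A$ with $A_0$ of the form \eqref{OscInt} and $F_i^A$ a semiclassical FIO as in Definition \ref{def:AlgModelCase}, and analogously for $B$. The cross-compositions of the FIO pieces with each other and with $A_0$, $B_0$ have frequency sets contained in $(\Delta^n)'\cup(\Fn)'$ by the standard semiclassical FIO calculus combined with
\[
\Delta^n\circ\Delta^n\subset\Delta^n,\quad \Delta^n\circ\Fn,\ \Fn\circ\Delta^n,\ \Fn\circ\Fn\subset\Fn,
\]
and the fact that $\Fn\circ\Fn$ is a clean composition of excess one (the bicharacteristics are one-dimensional); the bookkeeping of orders places each cross term into the appropriate $F_i^C$ slot of the decomposition of $C$.

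The heart of the argument is $A_0\circ B_0$. Writing out the product of the kernels gives an oscillatory integral in $(z,s_1,s_2,p,q,\sigma,\tau)$ with phase
\[
\frac{1}{\hbar}\Bigl[(x_1-z_1-s_1)p_1+(x'-z')p'+(z_1-y_1-s_2)q_1+(z'-y')q'\Bigr]+s_1\sigma+s_2\tau
\]
and amplitude $a(s_1,x,z,p,\sigma,\hbar)\,b(s_2,z,y,q,\tau,\hbar)$. The phase is quadratic in $(z,q)$ with Hessian of unit modulus determinant and vanishing signature, so stationary phase in $(z,q)$ produces an asymptotic expansion whose principal term is the amplitude evaluated at the critical point $q=p$, $z=z^\ast:=(y_1+s_2,y')$, and whose corrections are $O(\hbar)$ and feed into subleading $\hbar$-terms. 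After the substitution $s=s_1+s_2$, which rewrites $s_1\sigma+s_2\tau$ as $s\sigma+s_2(\tau-\sigma)$, the kernel assumes the form \eqref{OscInt} with amplitude
\[
c(s,x,y,p,\sigma,\hbar)=\int e^{is_2(\tau-\sigma)}\,a(s-s_2,x,z^\ast,p,\sigma,\hbar)\,b(s_2,z^\ast,y,p,\tau,\hbar)\,ds_2\,d\tau.
\]

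To verify $c\in\calA^{\ell'+\tilde\ell',\,m'+\tilde m'}_{\text{classical}}$, the $\hbar$-order is immediate from the estimate \eqref{AmplitudeIneq} applied to $a$ and $b$, while the polyhomogeneous expansion in $\sigma$ follows from a second stationary-phase argument in $(s_2,\tau)$, regarded as a two-dimensional integral with large parameter $\sigma$: the phase $s_2(\tau-\sigma)$ is stationary at $s_2=0$, $\tau=\sigma$ with unit-determinant, signature-zero Hessian, which yields the asymptotic series of order $m'+\tilde m'$ required by Definition \ref{def:AlgModelCase}. Its principal term, inserted into \eqref{symbolsModelCase} at $s=0$, $y=x$, $p_1=\sigma$, gives $\sigma_0(u\circ v)(\bar x,\bar x)=\sigma_0(u)(\bar x,\bar x)\,\sigma_0(v)(\bar x,\bar x)$. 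For $\sigma_1$, the direct identity
\[
\int c(s,\ldots,\sigma)\,e^{is\sigma}\,d\sigma=\int \widetilde a(s-s_2,x,z^\ast,p)\,\widetilde b(s_2,z^\ast,y,p)\,ds_2,
\]
with $\widetilde a(s_1,\ldots):=\int e^{is_1\sigma}a(s_1,\ldots,\sigma)\,d\sigma$, combined with the substitutions $p_1=0$, $y'=x'$, $s=x_1-y_1$, produces the advertised convolution once $s_2$ is identified with the parameter along the bicharacteristic $\bar z(s_2)$ of $p_1$ through $\bar y$.

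The main obstacle will be the uniform control of the $c$-expansion near $\Sigma^n$, where $p_1=\sigma\to 0$ and the two Lagrangians meet. The two asymptotic expansions (in $\hbar$ from the $(z,q)$-stationary phase and in $\sigma$ from the $(s_2,\tau)$-stationary phase) must be tracked simultaneously so that the singular-symbol structure of Definition \ref{def:AlgModelCase} is preserved; lower-order remainders from both steps must be identified either with elements of $J^{\ell+\tilde\ell-1/2,\,m+\tilde m-1/2}\oplus J^{\ell+\tilde\ell+1/2,\,m+\tilde m-3/2}$ via Proposition \ref{ExactSeqModelCase}, or with the FIO pieces on $\Delta^n\setminus\Sigma^n$ and $\Fn\setminus\Sigma^n$.
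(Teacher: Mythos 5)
Your overall route is the paper's: reduce to $\hbar$-independent amplitudes of the form \eqref{OscInt}, do stationary phase in the middle variables $(z,q)$, substitute $s=s_1+s_2$, and read off the symbols; your formula for the composed amplitude $c$ and the Fourier-transform identity giving the convolution formula for $\sigma_1$ are exactly what the paper obtains (with the roles of the two frequency variables interchanged), and the $\sigma_0$ computation is fine. The genuine gap is in the step you dispose of in one sentence: the claim that ``stationary phase in $(s_2,\tau)$ with large parameter $\sigma$'' yields a classical symbol expansion of order $m'+\tilde m'$. The phase $s_2(\tau-\sigma)$ does not have $\sigma$ as a multiplicative large parameter; to set up a stationary phase argument one must rescale $\tau=\sigma\tilde\tau$, and then the $\tilde\tau$-domain is unbounded and the amplitude $b(s_2,\dots,\sigma\tilde\tau)$ is not a uniformly bounded family (its $\tilde\tau$-derivatives blow up like powers of $\sigma$ near $\tilde\tau=0$, and for $\tilde m'\ge -1$ there is no integrable decay as $|\tilde\tau|\to\infty$), so the standard stationary phase theorem with remainders simply does not apply to the integral as written.

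The missing device is the one the paper's proof is built around: first cut the amplitudes off near zero frequency (the $\chi_0$ step, those parts being flow-out FIOs), and then introduce the conic partition $\chi_1,\chi_2$ of the $(\sigma,\tau)$-plane. On the two off-diagonal cones, where one frequency dominates the other, one has $|\tau-\sigma|\gtrsim\max(|\tau|,|\sigma|)$, and repeated integration by parts in the $s$-variable shows those contributions (the $\Upsilon_1,\Upsilon_2$ of the paper) are semiclassical Lagrangian states on $\Fn$ --- i.e.\ entire conic regions, not ``lower-order remainders,'' must be split off into the $F_2$ slot of Definition \ref{def:AlgModelCase} before any symbol expansion can be extracted. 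Only on the remaining cone, where $\tau\asymp\sigma$ so that $\tilde\tau=\tau/\sigma$ lies in a compact set bounded away from $0$ and $\infty$, does the rescaled stationary phase with large parameter $\sigma$ legitimately produce the classical expansion with leading term $2\pi\,a_{m'}\,\tilde b_{\tilde m'}$. Your closing paragraph gestures at ``uniform control near $\Sigma^n$'' and at identifying remainders with flow-out pieces, so you sense where the difficulty lies, but the proposal does not supply the frequency-splitting argument that constitutes the actual proof of the key classicality claim; as stated, that step would fail.
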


Note that, by the symbol calculus for semiclassical FIOs (\cite{GuS}) there are formulae for the symbols $\sigma_0$, $\sigma_1$, of the composition.  (Microlocally near $\Delta \setminus\Sigma$ the operators are pseudodifferential and the symbol calculus for $\sigma_0$ is the usual one). We will have  more to say about the symbol calculus in the next section.

\begin{proof}  (Our proof follows the lines of the proof of Theorem 0.1 in \cite{AUh}.)
It is not hard to show that the classes $J$ are closed under composition by operators
$F_1$, $F_2$ as in definition \ref{def:AlgModelCase}.  Therefore we start with kernels as in \eqref{OscInt}.
It is also not hard to show that it suffices to prove the theorem in the case when the amplitudes 
of these kernels are independent of $\h$, in which case $\ell = \tilde \ell =-1/2$.  Let us therefore consider
\[
u(x,y)=\frac{1}{(2\pi\h)^n}\int e^{i\h^{-1}\left( (x-y)p-sp_1\right)+is\sigma}a(s,x,y,p,\sigma)dsdpd\sigma,
\]
\[
v(y,z)=\frac{1}{(2\pi \h)^{n}}\int e^{i\h^{-1}\left( (y-z)\tilde p-\tilde s \tilde p_1 \right)+i\tilde s \tilde \sigma}\tilde{a} (\tilde s, y,z,\tilde p, \tilde \sigma)d \tilde s d \tilde p d \tilde \sigma,
\]
where $a$ and $\tilde a$ are classical symbols in $\sigma$ and $\tilde \sigma$ of orders $m'=m-1/2$ and 
$\tilde m' =\tilde m-1/2$, respectively.  For $\chi_0$ a function that vanishes near the origin, and $1$ outside a compact support, we have
\[
u(x,y)=\frac{1}{(2\pi\h)^n}\int e^{i\h^{-1}\left( (x-y)p-sp_1\right)+is\sigma}a(s,x,y,p,\sigma) \chi_0 (\sigma) dsdpd\sigma + \tilde u,
\]
\[
v(y,z)=\frac{1}{(2\pi \h)^{n}}\int e^{i\h^{-1}\left( (y-z)\tilde p-\tilde s \tilde p_1 \right)+i\tilde s \tilde \sigma}\tilde{a} (\tilde s, y,z,\tilde p, \tilde \sigma) \chi_0(\tilde \sigma ) d \tilde s d \tilde p d \tilde \sigma+ \tilde v,
\]
where $\tilde u$, $\tilde v \in \text{sc-}I^{\ell} \left( \bbR^n \times \bbR^n ; \Fn \right)$. We can therefore
assume that $a$, $\tilde a$ are zero near $\sigma=0$, $\tilde \sigma=0$ respectively. The composition has Schwartz kernel:
\[
\omega(x,z)=\frac{1}{(2\pi \h)^n}\int e^{is\sigma +i\tilde s \tilde \sigma} D ds d\tilde s d\sigma d \tilde \sigma dp,
\]
with
\[
D=\frac{1}{(2\pi\h)^n} \int e^{i\h^{-1} \phi} \left(a\cdot \tilde a\right)\,
(s,\tilde s,x,y,z,p,\tilde p, \sigma, \tilde \sigma)\, dy 
d\tilde p,
\]
where
\[
\phi= (x-y)p+(y-z)\tilde p-\tilde s\tilde p_1-sp_1.
\]
The critical points for this phase for each $x,z,p$ fixed are: $\tilde p=p$, $y'=z'$, $y_1=z_1-\tilde s$. So, by the stationary phase theorem, we obtain:
\[
D\sim e^{i\h^{-1}\phi(y=(z_1-\tilde s,z'),\tilde p=p)}a\Bigl(s,x,y=(z_1-\tilde s,z'),p,\sigma\Bigr)\ 
 \tilde a\Bigl(\tilde s, y=(z_1-\tilde s,z'),z, p, \tilde \sigma\Bigr)
\]
as $\h\to 0$.
Stationary phase in fact gives us a complete asymptotic expansion in increasing
powers of $\h$, with
coefficients derivatives of $a$ and $\tilde a$ evaluated at the critical points. It suffices to consider
the contribution to $\omega$ of the leading term in the expansion of $D$ (the other terms are treated
in the same manner):
\[
\begin{aligned}
\omega_0(x,z) := 
\frac{1}{(2\pi \h)^n}\int & e^{i\h^{-1} \left[ (x'-z')p'+(x_1-z_1-s-\tilde s)p_1 \right]+is\sigma +i \tilde s \tilde \sigma}   \\
& a(s,x,(z_1-\tilde s,z'),p,\sigma)\, \tilde  a (\tilde s,(z_1-\tilde s,z'),z,p,\sigma)\, ds d\tilde s dp d\sigma d\tilde \sigma . 
\end{aligned}
\]
Making the change of variables $t=s+\tilde s$, we can write 
\[
\omega_0(x,z) = \frac{1}{(2\pi \h)^n} \int e^{i\h^{-1}\left[ (x_1-z_1-t)p_1+(x'-z')p' \right]+i t \tilde \sigma } b(t,x,z,p,\tilde \sigma) dt dp d\tilde \sigma,
\]
where
\[
b(t,x,z,p,\tilde \sigma)=\int e^{is (\sigma-\tilde \sigma)}a(s,x,(z_1-t+s,z'),p,\sigma)\,\tilde a(t-s,(z_1-t+s,z'),z,p,\tilde \sigma)\,ds d\sigma .
\]
Next, we split $\omega_0$ in three parts,  as follows.

Let $\chi_k=\chi_k(\sigma, \tilde \sigma)$, $k=1,2$ be  smooth, classical symbols 
of degree zero such that 
\[
\chi_1= \left\{ 
\begin{array}{llrr}
1 \text{ for }| \sigma | \le \frac{1}{2} \epsilon |\tilde \sigma| \\ \\
0 \text{ for } |\sigma | \ge \epsilon |\tilde \sigma |
\end{array}
\right. ~~\text{ and }~~
\chi_2= \left\{ 
\begin{array}{llrr}
1 \text{ for }| \tilde \sigma | \le \frac{1}{2} \epsilon | \sigma| \\ \\
0 \text{ for } | \tilde \sigma | \ge \epsilon | \sigma |
\end{array}
\right.
\]
for $\epsilon << 1$.   We let
\[
\Upsilon_k (x,z) = 
\frac{1}{(2\pi \h)^n} \int e^{i\h^{-1}\left[ (x_1-z_1-t)p_1+(x'-z')p' \right]+i t \tilde \sigma } b(t,x,z,p,\tilde \sigma) \,\chi_k\, dt dp d\tilde \sigma\quad  k=1,2,
\]
and
\[
\Upsilon_0 = \omega_0-\Upsilon_1-\Upsilon_2.
\]
We will show that $\Upsilon_k$, $k=1,2$ is a semiclassical state in the flow out
while $\Upsilon_0$ is an integral of the form \eqref{OscInt}.

Let us rewrite, for $k=1,2$,
\[
\Upsilon_k = \frac{1}{\left( 2\pi \hbar \right)^{n}} 
\int e^{\h^{-1}(x'-z')p'} \left(\int e^{i\h^{-1}(x_1-z_1-t)p_1} 
C_k(x,z,t,p)\, dp_1\,dt\right)\, dp'
\]
where
\[
C_k(x,z,t,p) :=\int e^{it\tilde \sigma+is(\sigma-\tilde \sigma)}\chi_k a\; \tilde a \, 
ds d\sigma d\tilde \sigma, 
\]
interpreted as an iterated integral (first with respect to $s$).
On the support of $\chi_1$ one has
$|\sigma| \le \epsilon |\tilde \sigma|$ which implies
$|\sigma-\tilde \sigma| \ge |\tilde \sigma|-|\sigma| \ge (1-\epsilon)|\tilde\sigma|$, and therefore
\[
\frac{1}{|\sigma-\tilde \sigma|^N} \le \frac{1}{\left(1-\epsilon\right)^N} \frac{1}{|\tilde\sigma|^N}
\le \left( \frac{\epsilon}{1-\epsilon}\right)^{N} \frac{1}{|\sigma|^{N}} 
\]
for all $N>0$.
Since $a ~\tilde a$ vanish near $\tilde \sigma=0 =\sigma$ and $\chi_1$ vanishes in a conic neighborhood
of the diagonal, we can integrate by parts repeatedly to obtain
\[
C_1(x,z,t,p)=\int e^{it\tilde \sigma+is(\sigma-\tilde \sigma)} \frac{\chi_1}{(-i)^{N} 
(\sigma-\tilde \sigma)^N}D_{s}^{N} (a\tilde a) ds d\sigma d\tilde \sigma.
\]
Since $D_{s}^{N}(a\tilde a)$ is of the same order in $\sigma,\tilde \sigma$ and $N$ is arbitrary, $C_1$ is Schwartz in the variable $t$.  Applying once again the method of stationary phase this implies that
\[
\int e^{i\h^{-1}(x_1-z_1-t)p_1} C_1(x,z,t,p)\, dp_1\,dt
\]
is a semiclassical symbol and therefore $\Upsilon_1$ is a semiclassical state on the flow out.
The proof for $\Upsilon_2$ is analogous. 

To show that $\Upsilon_0$ is as desired we only need to show that
\[
\begin{aligned}
b_0(t,x,z,p,\tilde \sigma)=\int & e^{is (\sigma-\tilde \sigma)}  a(s,x,(z_1-t+s,z'),p,\sigma)\, \\
 & \tilde a(t-s,(z_1-t+s,z'),z,p,\tilde \sigma)\,\left(1-\chi_1(\sigma,\tilde\sigma)-\chi_2(\sigma,\tilde\sigma) \right)\, ds d\sigma.
\end{aligned}
\]
 is a classical symbol in $\tilde \sigma$ of order $m'+\tilde m'$.

Making the change of variables $\tau = \frac{\sigma}{\tilde \sigma}$, we obtain: 
\[
\begin{aligned}
b_0=\tilde \sigma \int e^{i s \tilde (\tau-1)} & a(s,x,(z_1-t+s,z'),p,\tilde \sigma \tau) \\
& \tilde a(t-s,(z_1-t+s,z'),z,p,\tilde \sigma)(1-\chi_1(\tilde \sigma \tau, \tilde \sigma)-\chi_2(\tilde \sigma \tau, \tilde \sigma)) ds d\tau.
\end{aligned}
\]
Notice that $\tau$ is bounded in the support of the amplitude, and so using stationary phase in $\tilde \sigma$, we get an expansion for $b_0$ in terms of $\tilde \sigma$, where the leading term is
\[
b\sim 2\pi a_{m'}(0,x,(z_1-t,z'),p,\tilde \sigma) ~ \tilde a_{\tilde m'}(t, (z_1-t,z'),z,p,\tilde \sigma),
\]
and we conclude $b$ is a classical symbol in $\tilde \sigma$ of degree $m'+\tilde m'$.

\medskip
The symbol of $\omega(x,z)$ in the diagonal is easily computed as:
\[
\sigma_{0}(\omega)(x,z=x)=2\pi b_{m'+\tilde m'}(t=0,x,z=x,p,p_1=\tilde \sigma)
\]
\[
=(2\pi)^2 a_{m'}(0,x,z=x,p,\tilde \sigma=p_1) \tilde a_{\tilde m'}(t=0,z=x,z=x,p, \tilde \sigma=p_1) 
\]
\[
= \sigma_{0}(u)(x,z=x) \sigma_{0}(v)(z=x,x).
\]

The symbol in the flow-out is the sum of the principal symbols of each summand at ($t=z_1-x_1,x'=z',z_1-t+s=x_1+s$), which after the cancellation of $\chi_1,\chi_2$, reduces to
\[
\begin{aligned}
\sqrt{2\pi} \int e^{it\tilde \sigma} & e^{is (\sigma-\tilde \sigma)} a(s,x,(x_1+s,x'),(0,p'),\sigma) \\
& \tilde a(t-s,(x-1+s,x'),(z_1,x'),(0,p'),\tilde \sigma) ds d\sigma d\tilde \sigma
\end{aligned}
\]
\[
\begin{aligned}
=\frac{1}{\sqrt{2\pi}} \int & \left( \sqrt{2\pi} \int e^{is\sigma} a(s,x,(x_1+s,x'),(0,p'),\sigma)d\sigma \right) \\
& \left( \sqrt{2\pi} \int e^{i (t-s) \tilde \sigma} \tilde a(t-s,(x_1+s,x'),(z_1,x'),(0,p'),\tilde \sigma) d\tilde \sigma \right) ds
\end{aligned}
\]
\[
=\frac{1}{\sqrt{2\pi}}\int \sigma_{1}(u)(x,(x_1+s,x'),(0,p'))\sigma_{1}(v)((x_1+s,x'),(z_1,x'),(0,p'))ds.
\]
\end{proof}


\section{The manifold case}

\label{sec:GeneralCase}

\subsection{The spaces $J^{\ell,m}$ on manifolds}

\label{Sec:DefOnManifolds}

In this section we extend the definition of the spaces $J^{\ell,m}$ to the manifold case.
Let $M$ be a $C^{\infty}$ manifold of dimension $n$, and $X_c \subset T^{*}M$ be a compact domain
with smooth boundary contained in the cotangent bundle. The boundary $\partial X_c$ is then foliated  by 
curves tangent to the kernel of the pull-back of the symplectic form.  In addition, we assume that the 
fibrating and Bohr-Sommerfeld conditions are satisfied, i.e., the leaves of the null foliation are the fibers 
of a submersion $\pi: \partial X_c\to S$, and for each leaf 
$\gamma \subset \partial X_c$, 
\begin{equation}
\label{Eq:BohrSomm}
\int_\gamma \alpha \in 2\pi \mathbb{Z},
\end{equation}
where $\alpha$ is the tautological one-form in $T^* M$.

\begin{definition} Given $X_c$ as above, define the flow-out
\begin{equation}
\label{Def:FlowOutXc}
\mathcal{F}\partial X_c:=\left\{ (\x, \y) ~\big |~ \x, \y \in \partial X_c, ~\x, \y \text{ are in the same leaf} \right\} \subset T^*\left( M\times M\right).
\end{equation}
\end{definition}
The condition that $\partial X_c$ be fibrating easily implies that $\mathcal{F}\partial X$ is a closed
submanifold of $T^*\left( M\times M\right)$.

\begin{lemma}
\label{Lemma:LocalP}
There exists an open neighborhood $\partial X_c \subset U \subset T^* M$ of $\partial X_c$, and a map
\begin{equation}
\label{eq:LocalP}
P:U \to \mathbb{R}
\end{equation}
such that zero is a regular value of $P$, $\partial X_c=P^{-1}(0)$,  the orbits of the Hamilton flow generated by $P$ are $2\pi$ periodic on the boundary $\partial X_c$, and coincide with the leaves of the foliation.
\end{lemma}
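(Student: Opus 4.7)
The plan is to produce $P$ by rescaling an arbitrary defining function. First I would take a tubular neighborhood $U$ of $\partial X_c$ in $T^*M$ and choose any smooth $P_0 \in C^\infty(U,\mathbb{R})$ with $\partial X_c = P_0^{-1}(0)$ and $dP_0 \neq 0$ on $\partial X_c$; such a $P_0$ exists by the collar neighborhood theorem. For any $v \in T_x(\partial X_c)$, one has $\omega(X_{P_0},v) = dP_0(v) = 0$, so $X_{P_0}\rest_{\partial X_c}$ lies in the one-dimensional kernel of $\omega\rest_{\partial X_c}$. Since $dP_0 \neq 0$ forces $X_{P_0} \neq 0$, it spans the null direction, and hence the integral curves of $X_{P_0}$ on $\partial X_c$ coincide set-theoretically with the leaves of the null foliation. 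By the fibrating hypothesis these are the fibers of $\pi:\partial X_c\to S$; they are closed submanifolds of the compact manifold $\partial X_c$, so each is a circle, traversed by $X_{P_0}$ with some minimal period $T(x) > 0$.

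Next I would verify that $T:\partial X_c \to (0,\infty)$ is smooth. Given $x_0$, pick a local smooth section $\Sigma$ of $\pi$ through $x_0$; then $(s,t)\mapsto \phi_t^{P_0}(s)$ is a local diffeomorphism $\Sigma\times\mathbb{R}\to \partial X_c$ near $(x_0,0)$, and the nearby period is characterized by a first-return-to-$\Sigma$ condition whose smoothness follows from the implicit function theorem applied to the transversality of $X_{P_0}$ to $\Sigma$. Since $T$ is constant on the fibers of $\pi$, it descends to a smooth function on $S$, which I would pull back and extend to a smooth positive function $\tilde T$ on a (possibly smaller) neighborhood $U$ of $\partial X_c$ using a bump-function construction.

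Finally I would set
\[
P := \frac{\tilde T}{2\pi}\, P_0.
\]
Since $\tilde T > 0$, $P^{-1}(0) = \partial X_c$, and $dP\rest_{\partial X_c} = \tfrac{\tilde T}{2\pi}\,dP_0$ is nonvanishing, so $0$ is a regular value. The Leibniz rule $X_{fg} = f X_g + g X_f$ gives
\[
X_P = \frac{\tilde T}{2\pi}\, X_{P_0} + P_0 \, X_{\tilde T/(2\pi)},
\]
so on $\partial X_c$, where $P_0 = 0$, one has $X_P = \tfrac{\tilde T}{2\pi}\, X_{P_0}$. Because $\tilde T$ equals $T$ on $\partial X_c$ and is constant along each orbit, the $X_{P_0}$-orbit of period $T(x)$ becomes an $X_P$-orbit of period $2\pi$. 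The orbits coincide set-theoretically with those of $X_{P_0}$, hence with the leaves of the null foliation, completing the proof.

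The only substantive step is the smoothness of the period function $T$; everything else is a direct verification. The fibrating hypothesis is exactly what makes that step work by reducing the question to the smoothness of a first-return map.
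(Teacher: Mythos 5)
Your proposal is correct and takes essentially the same route as the paper's proof: start from an arbitrary defining function $F$ of $\partial X_c$, observe that its Hamilton trajectories coincide set-theoretically with the leaves of the null foliation, and rescale by the period to set $P = \frac{T}{2\pi}F$ after extending $T$ smoothly to a neighborhood. The only difference is that you spell out the smoothness of the period function via a first-return/implicit-function argument, a point the paper simply asserts as a consequence of the fibrating condition.
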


\begin{remark}
From now on we will fix a defining function of $X_{c}$, $P$, with the properties of this Lemma.
\end{remark}

\begin{proof}
There exists $U$ and $F: U\to\bbR $ such that $\partial X_c=F^{-1}(0)$, where zero is a regular value. 
The Hamiltonian vector field $\Xi_F$ is tangent to $F^{-1}(0)=\partial X_c$,
and therefore the trajectories of the Hamiltonian $F$ coincide set-theoretically
with the leaves of the foliation.  In particular, they are periodic.  For each $\x\in \partial X_c$, let $T(\x)$ 
denote the minimal period of the trajectory through $\x$.  The fibrating condition can be seen to imply 
that the function $T$ is smooth.  Extend this function to a smooth function
$T: U\to \mathbb{R}^{+}$. The defining function that 
satisfies the conclusions of the lemma is then
\[
P=\frac{T}{2\pi} ~F
\]
\end{proof}

\begin{remark}
Notice that the boundary $\partial X_c$ of $X_c$ may not be connected. In those cases, the flow-out 
$\mathcal{F}\partial X_c$ consist of the union of Lagrangian that are pairwise disjoint.
\end{remark}

Let $\Delta \subset T^*\left( M\times M \right)$ be the diagonal in $T^*\left( M\times M\right)$. 
\begin{lemma}
The diagonal and the flow-out $(\Delta, \F)$ intersect cleanly.
\end{lemma}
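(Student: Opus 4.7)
The plan is to verify the two defining axioms of clean intersection directly. First, observe that the intersection set is
\[
\Sigma := \Delta \cap \F = \{(\x,\x) : \x \in \partial X_c\},
\]
the image of $\partial X_c$ under the diagonal embedding, hence a smooth submanifold of $T^*(M\times M)$ of dimension $2n-1$. What remains is the tangent space identification $T_p \Sigma = T_p \Delta \cap T_p \F$ at every $p = (\x, \x) \in \Sigma$.

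To access $T_p \F$, I would use the local parametrization
\[
(\x', t) \longmapsto (\x',\, \Phi_t^P(\x')), \qquad \x' \in \partial X_c,\ t \in \bbR,
\]
where $\Phi_t^P$ is the Hamilton flow of the defining function $P$ furnished by Lemma \ref{Lemma:LocalP}. Local surjectivity onto $\F$ holds because two points of $\partial X_c$ lie in the same leaf iff they are related by $\Phi^P$, and local injectivity follows since the minimal period is $2\pi$ so $t=0$ is an isolated fixed point of $t \mapsto \Phi_t^P(\x')$. Differentiating at $(\x, 0)$ yields
\[
T_p \F \;=\; \{(v,\, v + s\,\Xi_P(\x)) : v \in T_{\x}\partial X_c,\ s \in \bbR\},
\]
where $\Xi_P$ is the Hamilton vector field of $P$. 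Combined with $T_p \Delta = \{(w,w) : w \in T_{\x}(T^*M)\}$, a vector $(w,w)$ lies in $T_p \F$ iff $w = v$ and $w = v + s\,\Xi_P(\x)$ for some $v \in T_{\x}\partial X_c$ and $s \in \bbR$. Because $\Xi_P(\x)$ is nonzero on $\partial X_c$ (it generates the null foliation), this forces $s = 0$ and $w \in T_{\x}\partial X_c$, so
\[
T_p \Delta \cap T_p \F \;=\; \{(v,v) : v \in T_{\x}\partial X_c\} \;=\; T_p \Sigma,
\]
which is precisely the clean intersection condition.

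The argument is essentially linear algebra once the parametrization of $\F$ via the Hamilton flow is in place, and the only (modest) obstacle is writing that parametrization down correctly. Worth noting: the Bohr–Sommerfeld hypothesis plays no role here, and the fibrating assumption enters only through Lemma \ref{Lemma:LocalP}, which produces the smooth defining function $P$ whose Hamilton flow sweeps out the leaves of the null foliation on $\partial X_c$.
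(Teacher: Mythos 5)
Your proof is correct and follows essentially the same route as the paper: identify $\Sigma = \Delta\cap\F$ as the diagonal image of $\partial X_c$, compute $T_p\F$ using the Hamilton-flow parametrization $(\x',t)\mapsto(\x',\Phi_t^P(\x'))$, and check $T_p\Delta\cap T_p\F = T_p\Sigma$. The only difference is cosmetic (you put the flow direction in the second factor where the paper puts it in the first, and you spell out the parametrization the paper dismisses as "easy to show"), so the two arguments coincide.
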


\begin{proof}
Let $(\x,\x) \in \Delta \cap \F$. It is easy to show that
\[
T_{(\x,\x )}\F=\left\{ (\delta\x+r \Xi_{P}(\x),\delta\x) \big | \delta\x \in T_{\x}\partial X_c, r \in \mathbb{R} \right\},
\]
and so
\[
T_{(\x,\x)}\Delta \cap T_{(\x,\x )} \F =\left\{ ( \delta\x,\delta\x ) \big | \delta\x \in T_{\x} \partial X_c \right\} = T_{(\x,\x)} \left( \Delta \cap \F \right)
\]
\end{proof}

By an analogue to Proposition 2.1 in \cite{GU}, there exists a locally finite covering $\{U_i\}_i$
of $\Delta \cap \mathcal{F}\partial X_c$, $U_i\subset T^*(M\times M)$ open and contractible, where each $U_i$ intersects only one connected component of $\mathcal{F}\partial X_c$, and for each $U_i$ a canonical transformation
\[
\chi_i : U_i\longrightarrow T^{*}\mathbb{R}^{2n}
\]
mapping $U_i \cap \Delta$ to $\Delta^n$, and $U_i \cap \F$ to $\Fn$, where $\Delta^n, \Fn$ are the diagonal and flow-out in the model case, respectively.

\begin{definition}
\label{def:JGenCase}
Let $\Sigma=\Delta \cap \mathcal{F} \partial X_c$ be the intersection of the diagonal and the flow-out. The space $J^{\ell,m}\left( M\times M; \Delta, \mathcal{F}\partial X_c \right)$  is the set of families of functions $A(x,y,\h)$ which can be written in the form
\[
A=A_0+A_1+\sum \omega_i
\]
where:
\begin{enumerate} 
\item $A_0 \in {\text sc-}I^{\ell+m}\left( M\times M ; \Delta \setminus \Sigma \right)$ and $A_1 \in {\text sc-}I^{\ell}\left( M\times M;\F\right)$, and 
\item $\omega_i$ is microlocally supported in $U_i$ and is of the form 
\[
\omega_i = F_i (v_i),
\]
where $v_i \in  J^{\ell,m}\left( \mathbb{R}^{n} \times \mathbb{R}^n; \Delta^n, \Fn \right)$ and $F_i$ is a semiclassical Fourier integral operator associated to $\chi_{i}^{-1}$.
\end{enumerate}
\end{definition}

\begin{remark}
As in \cite{GU}, one can show that 
the definition does not depend on the choice of the semiclassical FIOs $F_i$.
\end{remark}

\begin{proposition}\label{FSetGenCase}
In the general case the conclusion of 
Proposition \ref{FSetModelCase} still holds, namely:  
Operators with kernel $A\in J^{\ell,m}\left( M\times M;~\Delta ,\F \right)$ have frequency set contained in the union
\[
FS(A)\subset \Delta' \cup \F '.
\]
Away from the intersection $\Sigma$ they are microlocally semiclassical Fourier integral operators ${\text sc-}I^{\ell+m}\left( M\times M ; \Delta \setminus \Sigma \right)$ and ${\text sc-}I^{\ell}\left( M\times M,\mathcal{F}\partial X_c \setminus \Sigma \right)$ respectively. Furthermore, one has well-defined symbol maps (ignoring Maslov factors)
\begin{equation}\label{symbolsGenCase}
\begin{array}{ccc}
 &\sigma_0\nearrow  & |\wedge|^{1/2}(\Delta\setminus\Sigma)\\
 J^{\ell,m}\left( M \times M;~\Delta,\mathcal{F}\partial X_c \right) & \\
 & \sigma_1\searrow &  |\wedge|^{1/2}(\mathcal{F}\partial X_c \setminus\Sigma).
\end{array}
\end{equation}
\end{proposition}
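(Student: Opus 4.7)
The plan is to reduce everything to the model case of Proposition~\ref{FSetModelCase} via the local structure built into Definition~\ref{def:JGenCase}. Given $A\in J^{\ell,m}(M\times M;\Delta,\F)$, decompose
\[
A = A_0 + A_1 + \sum_i \omega_i, \qquad \omega_i = F_i(v_i),
\]
with $A_0\in\text{sc-}I^{\ell+m}(M\times M;\Delta\setminus\Sigma)$, $A_1\in\text{sc-}I^{\ell}(M\times M;\F)$, and $v_i\in J^{\ell,m}(\bbR^n\times\bbR^n;\Delta^n,\Fn)$. By the basic theory of semiclassical Fourier integral operators, $FS(A_0)\subset\Delta'$ and $FS(A_1)\subset\F'$. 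For each local piece, Proposition~\ref{FSetModelCase} gives $FS(v_i)\subset(\Delta^n)'\cup(\Fn)'$; pushing forward under the canonical relation of $F_i$, which is the graph of $\chi_i^{-1}$ and which by construction sends $\Delta^n\mapsto \Delta\cap U_i$ and $\Fn\mapsto\F\cap U_i$, we obtain $FS(\omega_i)\subset\Delta'\cup\F'$. Summing proves the first assertion.

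For the statement away from $\Sigma$, I would localize the decomposition: near a point $p\in\Delta\setminus\Sigma$, only $A_0$ and those $\omega_i$ whose microsupport meets $\Delta\setminus\Sigma$ can contribute. Proposition~\ref{FSetModelCase} tells us that each such $v_i$ is microlocally in $\text{sc-}I^{\ell+m}(\bbR^n\times\bbR^n;\Delta^n\setminus\Sigma^n)$, and composing with the semiclassical FIO $F_i$ preserves the Lagrangian class and its order because $F_i$ quantizes a canonical transformation that sends $\Delta^n$ to $\Delta\cap U_i$. Adding to $A_0$, which is already in the correct class, shows that $A$ is microlocally in $\text{sc-}I^{\ell+m}(M\times M;\Delta\setminus\Sigma)$. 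The argument on $\F\setminus\Sigma$ is identical, using the contribution of $A_1$ and the $\omega_i$ with microsupport meeting $\F\setminus\Sigma$.

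The symbol maps $\sigma_0$ and $\sigma_1$ are then defined as the principal symbols of $A$ considered as a Lagrangian state on $\Delta\setminus\Sigma$ and on $\F\setminus\Sigma$ respectively; both are intrinsically defined half-densities by the standard theory \cite{GuS}. Locally on $U_i$ one may compute them by transporting the model symbols \eqref{symbolsModelCase} of $v_i$ through the FIO $F_i$ using the semiclassical symbol calculus. The main obstacle is independence of the decomposition: if $A$ admits two representations as in Definition~\ref{def:JGenCase}, their difference must vanish as a principal symbol on each Lagrangian piece separately. This follows because the difference of any two choices of $A_0$ lies in $\text{sc-}I^{\ell+m-1}(\Delta\setminus\Sigma)$ plus a state associated to $\F$, and similarly for $A_1$ and the $\omega_i$; the microlocal Lagrangian nature away from $\Sigma$ ensures that the $\Delta$- and $\F$-contributions to the principal symbol decouple. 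This is precisely the gluing argument from \cite{GU}, §5--6, the only additional point being that the expansions in the classical amplitudes $\mathcal{A}^{\ell,m}_{\text{classical}}$ interact correctly with the semiclassical symbol calculus for $F_i$.
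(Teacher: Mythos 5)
Your argument is sound as far as the frequency-set statement and the microlocal membership away from $\Sigma$ are concerned, but it follows a genuinely different route from the paper. You patch together the model case: decompose $A=A_0+A_1+\sum_i F_i(v_i)$ as in Definition~\ref{def:JGenCase}, quote Proposition~\ref{FSetModelCase} for each $v_i$, and push frequency sets and Lagrangian classes through the semiclassical FIOs $F_i$ associated to $\chi_i^{-1}$. The paper instead proves the model case and the manifold case by one global device, the $\h$-transform: Theorem~\ref{SymbCalculus} identifies $J^{\ell,m}(M\times M;\Delta,\F)$ with the Guillemin--Uhlmann class associated to the homogenized pair $(\widetilde{\Delta},\widetilde{\mathcal{F}\partial X_c})$ on $M\times S^1$, Proposition 2.4 of \cite{PU} then yields the frequency-set containment, and the symbols are obtained by restricting the homogeneous symbols to $\theta=0$, $\kappa=1$ via \eqref{eq:SymbolDownstairs}. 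Note that Proposition~\ref{FSetModelCase}, which you use as a black box, is itself proved in the paper by this $\h$-transform argument, so your proof is not circular, but its architecture is different: yours is more elementary and stays within the standard semiclassical FIO calculus, while the paper's homogenization buys, in addition, the finer symbolic structure at $\Sigma$ (the conormal extension of $\sigma_1$ and the compatibility condition) directly from \cite{GU}.

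One point is glossed over, and it is exactly where the hypotheses of the paper enter. Microlocally at a point of $\Delta'\setminus\Sigma'$ or $\F'\setminus\Sigma'$ the principal symbol of a semiclassical Lagrangian state is intrinsic, so once microlocal membership is established no decomposition-independence argument is really needed for the symbol maps (your comparison of two decompositions is beside the point). What does require an argument is that the locally transported model symbols on the flow-out assemble into a single global half-density on $\F\setminus\Sigma$: for a Lagrangian that is not exact the semiclassical symbol carries oscillatory factors $e^{iS/\h}$ involving the action along the leaves, and their consistency on overlaps $U_i\cap U_j$ uses precisely the Bohr--Sommerfeld condition \eqref{Eq:BohrSomm} together with the restriction $\h=1/N$, as the paper points out in the remarks following Theorem~\ref{Main3}. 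In the paper this is automatic, since Bohr--Sommerfeld is what makes the homogenization \eqref{eq:LiftedFlow} a closed conic Lagrangian to which the homogeneous theory applies; in your patching argument you should add a word explaining why the symbols of the $F_i(v_i)$ agree on overlaps along $\F$, which is where these conditions must be invoked.
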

The proof of the existence of the symbol calculus is the subject of the following section.

\medskip
Our construction, in particular, gives a way to associating to $X_c$ an algebra of operators, which can be thought of 
as a quantization of $X_c$:
\begin{definition}
We will denote by $\mathcal{A}_{X_c}$ the space of  elements in $J\left( M\times M; \Delta, \partial X_c \right)$ that
are microlocally of order O($\h^\infty$) in the complement of $X_c$.
\end{definition}
It is not hard to see that $\mathcal{A}_{X_c}$ is indeed closed under composition.  (Elements in this algebra 
correspond to  amplitudes that are of order $-\infty$ in $\sigma$ as $\sigma\to -\infty$.)


\subsection{Symbolic calculus}

\label{sec:SymbCalculus}

Here we discuss how Proposition
\ref{FSetGenCase} can be proved (for $\h$ tending to zero along certain sequences) using the methods from \cite{PU}.  
We begin by recalling the ideas and results from {\it op.~ cit.} that we will need.  

The pre-quantum circle bundle of $T^*M$ can be identified with the following submanifold of $T^*(M\times S^1)$:
\[
Z = \{(x,\theta; \xi,\kappa)\in T^*(M\times S^1)\;;\; \kappa =1\},
\]
with the obvious circle action.  The connection form, $\alpha$, is the pull-back to $Z$ of the canonical one form of $T^*(M\times S^1)$.  

\begin{definition} (\cite{PU})
A Lagrangian submanifold $\Lambda\subset T^*M$ will be called {\em admissible} iff  there exists a conic Lagrangian submanifold, 
\[
\tilde\Lambda\subset T^*(M\times S^1)\cap \{\kappa >0\}
\]
such that 
\begin{equation}
\label{eq:Homogenization}
\Lambda = \Bigl(\tilde\Lambda\cap Z\Bigr)/S^1.
\end{equation}
We call such a $\tilde\Lambda$ a {\em homogenization} of $\Lambda$.
\end{definition}

It is not hard to see that $\Lambda$ is admissible if and only if it satisfies the following
Bohr-Sommerfeld condition:  There exists $\phi:\Lambda\to S^1$ such that
\[
\iota^*\eta = d\log\phi,
\]
where $\iota:\Lambda\to T^*M$ is the inclusion and $\eta$ the canonical one form of $T^*M$.  Given such a $\phi$, a homogenization of $\Lambda$ can be defined by:
\begin{equation}
\label{eq:hom_phi}
\tilde\Lambda = \{e^{i\theta}=\phi(\lambda), \ \kappa>0\}.
\end{equation}

\begin{definition}
Let $M$ be a $C^{\infty}$ manifold and consider a family of smooth functions $\left\{ \psi_{\hbar} \right\}$. The \emph{$\hbar$ transform} of the family  $\psi_{\hbar}$ is the following distribution (if the series converges weakly) in $M\times S^{1}$:
\[
\Psi(x,\theta)=\sum_{m=0}^{\infty}\psi_{1/m}(x)e^{im\theta} .
\]
\end{definition}

The main point of the previous two definitions is the following
\begin{lemma}(\cite{PU})
The $\h$ transform of a semiclassical state associated to an admissible Lagrangian is a Lagrangian distribution associated to a homogenization of the Lagrangian submanifold.
\end{lemma}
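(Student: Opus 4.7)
The plan is to reduce to the WKB representation of $\psi_\h$ and identify the resulting sum with a classical (conic) oscillatory integral in an auxiliary frequency variable $\kappa$. Pick a point of $\Lambda$ and, on a microlocal neighborhood, write
\[
\psi_\h(x)=(2\pi\h)^{-N/2}\int e^{iS(x,\xi)/\h}\,a(x,\xi,\h)\,d\xi,
\]
where $S$ is a non-degenerate local phase function parametrizing $\Lambda$ and $a$ is a classical semiclassical symbol in $N$ fiber variables $\xi$. Setting $\h=1/m$ and multiplying by $e^{im\theta}$ turns the $m$-th summand of the $\h$-transform into
\[
\psi_{1/m}(x)\,e^{im\theta}=\frac{m^{N/2}}{(2\pi)^{N/2}}\int e^{im(S(x,\xi)+\theta)}\,a(x,\xi,1/m)\,d\xi,
\]
in which $m$ plays the role of a frequency.

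Next I would pass from the discrete sum over $m\ge 1$ to an oscillatory integral over a continuous variable $\kappa$. Let $\chi(\kappa)$ be a cutoff with $\chi=1$ for $\kappa\ge 1$ and $\chi=0$ for $\kappa\le 1/2$, and set $\tilde a(x,\xi,\kappa):=\chi(\kappa)\,\kappa^{N/2}\,a(x,\xi,1/\kappa)$, a classical symbol of order $N/2$ in $\kappa$ supported in $\kappa>0$. The distributional Poisson identity
\[
\sum_{m\in\bbZ}\delta(\kappa-m)=\sum_{l\in\bbZ}e^{2\pi il\kappa}
\]
combined with the support property of $\tilde a$ lets me rewrite, up to a $C^\infty$ contribution from the finite range $0<\kappa<1$,
\[
\Psi(x,\theta)=\frac{1}{(2\pi)^{N/2}}\sum_{l\in\bbZ}\iint e^{i\kappa(S(x,\xi)+\theta+2\pi l)}\,\tilde a(x,\xi,\kappa)\,d\xi\,d\kappa.
\]
Each summand is a classical oscillatory integral with conic phase $\Phi_l=\kappa(S+\theta+2\pi l)$; since $\theta$ is $2\pi$-periodic, the $l$-translates parametrize the same conic Lagrangian
\[
\tilde\Lambda_{\mathrm{loc}}=\bigl\{(x,\theta;\kappa\,\partial_xS,\kappa)\;:\;\partial_\xi S=0,\ e^{i\theta}=e^{-iS(x,\xi)},\ \kappa>0\bigr\}.
\]

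Finally I would identify $\tilde\Lambda_{\mathrm{loc}}$ with the homogenization $\tilde\Lambda$ of Equation~(\ref{eq:hom_phi}) and patch the local representations into a single global Lagrangian distribution on $M\times S^1$. On the critical set $\partial_\xi S=0$ one has $d(S|_{\mathrm{crit}})=\iota^*\eta$, so $e^{-iS}$ agrees with the globally defined Bohr-Sommerfeld section $\phi$ of admissibility up to a locally constant phase; admissibility forces these constants to lie in $2\pi\bbZ$ on overlaps, where they are absorbed by the $S^1$-periodicity in $\theta$ and by the sum over $l$. A microlocal partition of unity on $\Lambda$, lifted to $M\times S^1$, then assembles the local oscillatory integrals into a classical Lagrangian distribution associated to $\tilde\Lambda$ of the expected order. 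The principal obstacle I foresee is making the Poisson-summation step rigorous: one must verify that the sum over $l$ converges as a distribution in $\theta$, and that the integer phase ambiguities in the local phases $S$ are exactly the ones neutralized by the Bohr-Sommerfeld hypothesis, so that patching over overlaps does not introduce residual locally constant obstructions to forming a global homogenization.
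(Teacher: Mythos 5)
Your argument is correct in outline, but note that it is genuinely different from what the paper (and the source it cites) does. The paper does not prove this lemma directly -- it quotes it from \cite{PU} -- and its in-text analogue, the proof of Theorem \ref{SymbCalculus}, handles the $\h$-transform by summing the series in $N=1/\h$ exactly, producing the factor $e^{i\Phi}/(1-e^{i\Phi})$, which is conormal to $\{\Phi\in 2\pi\bbZ\}$; the transform is then exhibited as a push-forward of a product of conormal distributions, and the $\h$-dependence of the amplitude is treated as an asymptotic sum of derivatives and integrals in $\theta$ of the $\h$-independent case. You instead apply Poisson summation, trading the discrete sum over $N$ for a conic oscillatory integral in a continuous frequency $\kappa$ (with the semiclassical expansion of $a$ repackaged as a classical symbol in $\kappa$) plus a sum over winding numbers $l$; this directly exhibits a nondegenerate conic phase $\kappa(S(x,\xi)+\theta+2\pi l)$ parametrizing the homogenization, which is arguably more transparent about why the Bohr--Sommerfeld condition and the $S^1$-periodicity enter. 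The two routes are Fourier-dual to one another ($\sum_m e^{im\Phi}=2\pi\sum_l\delta(\Phi-2\pi l)$), so neither is more general, but the paper's version yields the product/conormal structure it needs later for the singular-symbol classes, while yours gives an explicit phase-function presentation. Two small repairs to your write-up: the convergence of the $l$-sum you worry about is settled by repeated integration by parts in $\kappa$, since $\partial_\kappa\Phi_l=S+\theta+2\pi l$ is bounded away from zero, on compact sets and on the support of the amplitude, for all but finitely many $l$ (the amplitude being a symbol of finite order in $\kappa$), so all but finitely many terms are $O(|l|^{-\infty})$ and smooth; and admissibility does not \emph{force} the overlap constants of arbitrarily chosen local phases to lie in $2\pi\bbZ$ -- it guarantees the \v{C}ech class of $\iota^*\eta/2\pi$ is integral, so one may \emph{re-choose} the local generating functions (matching them to the global section $\phi$) so that the constants lie in $2\pi\bbZ$, after which they are indeed absorbed by the shift $l\mapsto l+n$ and the periodicity in $\theta$ exactly as you say.
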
 

We claim that the previous lemma generalizes to our spaces 
$J^{\ell, m}$ of semiclassical pseudodifferential operators with singular symbols, so that the
$\h$ transform of their kernels are Guillemin-Uhlmann operators (i.~e.~ those defined in \cite{GU}) on $M\times S^1$.  

Thus we now consider $\Delta$ to be the diagonal in $T^*\left( M\times M\right)$ and $\mathcal{F}\partial X_c$ as in \eqref{Def:FlowOutXc}.  Clearly a homogenization for $\Delta$ is the diagonal $\widetilde{\Delta}$ of $T^{*}\left(M\times S^{1}\right)^{+}=T^{*}\left( M\times S^1 \right) \cap \left\{ \kappa > 0\right\}$.  Let $P$ be a globally defining function for $\partial X_c$ with periodic Hamilton flow on it,
as in \eqref{eq:LocalP}.
Then a homogenization for $\mathcal{F}\partial X_c$ is the flow-out of the homogenization of $P$,  which is the function $\tilde{P}: T^{*}\left(U\times S^{1}\right)^{+}\to\bbR$ defined as
\[
\widetilde{P}(x, e^{i\theta}; \xi,\kappa) = \kappa P(x,\xi/\kappa),
\]
where $U$ is the neighborhood of $\partial X_c$ described in Lemma \ref{Lemma:LocalP}. Specifically, 
\begin{equation}
\label{eq:LiftedFlow}
\widetilde{\mathcal{F}\partial X_c}=\left\{ \left( \Phi_s^{\widetilde{P}} (x, e^{i\theta}, \xi,\kappa)~;~(x, e^{i\theta}, \xi,\kappa ) \right)  \big| ~s\in \mathbb{R}, \kappa > 0 ,~P(x,\xi/\kappa)=0 \right\}
\end{equation}
where $\Phi_s^{\widetilde{P}}$ is the Hamilton flow generated by the equations (here $p=\xi/\kappa$)
\[
\begin{array}{llll}
\dot{x}=\frac{\partial P}{\partial p}(x,p), & &\dot{\theta}=-p~\frac{\partial P}{\partial p}(x,p) \\ \\
\dot{\xi}=-\kappa \frac{\partial P}{\partial x}(x,p) ,& & \dot{\kappa}=0.
\end{array}
\]
Notice that $\widetilde{\mathcal{F}\partial X_c}$ is closed if the flow $\Phi_t^P$ is $2\pi$ periodic, which happens if the Bohr-Sommerfeld condition
\[
\int_{\gamma}pdx \in 2\pi \mathbb{Z}
\]
is satisfied for orbits $\gamma \subset \partial X_c$ of $P$. 
The function $\phi: \F \to S^1$ associated to this homogenization is 
\[
\phi(\x,\y) = e^{-i \int_{\x}^{\y} pdx },
\]
where $\int_{\x}^{\y} pdx$ is the action from $\x$ to $\y$, and the integral is taken on the curve in the leaf joining $\x$ to $\y$.


\subsection{The existence of the symbolic calculus. }
\label{hTransform}

Notice that $\Delta$ and $\mathcal{F}\partial X_c$ intersect cleanly. As a result, their homogenizations $\widetilde{\Delta}$ and $\widetilde{\mathcal{F}\partial X_c}$ are conic and intersect cleanly too, forming an intersecting pair, in the sense of \cite{MU,GU}. 
The relationship between the semiclassical objects defined above and the operators described in \cite{GU} is as follows:

\begin{theorem}
\label{SymbCalculus}
Let $\Delta, \mathcal{F}\partial X_c$ be as above, and $\widetilde{\Delta}, \widetilde{\mathcal{F}\partial X_c}$ their homogenization, respectively. Then any operator $A \in J^{\ell,m}\left( M\times M ~;~\Delta, \mathcal{F}\partial X_c \right)$ if and only if its $\h$-Transform belongs to $I^{\ell,m}\left( M \times S^1\times M \times S^1 ~;~  \widetilde{\Delta}, \widetilde{\mathcal{F}\partial X_c} \right)$.
\end{theorem}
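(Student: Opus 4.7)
The strategy is to reduce to the model case via Definition \ref{def:JGenCase} and then to compute the $\hbar$-transform of an oscillatory integral of the form \eqref{OscInt} explicitly. By the results of \cite{PU} quoted above, the $\hbar$-transform intertwines semiclassical FIOs associated to admissible canonical relations with homogeneous FIOs associated to their homogenizations; applying this to the operators $F_i$ of Definition \ref{def:JGenCase}, the claim is reduced to its model-case analogue on $\bbR^n$: the $\hbar$-transform of a kernel $A(x,y,\hbar)$ as in \eqref{OscInt} lies in $I^{\ell,m}(\widetilde{\Delta^n},\widetilde{\Fn})$, and conversely every such GU distribution arises this way.

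Next I would carry out the model computation. Setting $\hbar = 1/N$ in \eqref{OscInt} and rescaling $p \mapsto \xi/N$ yields
\[
A(x,y,1/N) = \frac{1}{(2\pi)^n}\int e^{i[(x_1-y_1-s)\xi_1 + (x'-y')\xi' + s\sigma]}\, a(s,x,y,\xi/N,\sigma,1/N)\,ds\,d\xi\,d\sigma,
\]
so that the $\hbar$-transform becomes
\[
\Psi(x,\theta,y,\theta') = \sum_{N\geq 1} \frac{1}{(2\pi)^n}\int e^{i\Phi_N}\, a(s,x,y,\xi/N,\sigma,1/N)\, ds\,d\xi\,d\sigma,
\]
with $\Phi_N = (x_1-y_1-s)\xi_1+(x'-y')\xi' + s\sigma + N(\theta-\theta')$. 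Poisson summation allows me to replace $\sum_{N\geq 1}$ by $\int_0^\infty d\kappa$, modulo $C^\infty$ error terms concentrated at $\theta - \theta' \in 2\pi\bbZ\setminus\{0\}$. The result is an oscillatory integral with homogeneous phase $\Phi = (x_1-y_1-s)\xi_1 + (x'-y')\xi' + s\sigma + \kappa(\theta-\theta')$ and amplitude $\tilde a(s,x,y,\xi,\sigma,\kappa) := a(s,x,y,\xi/\kappa,\sigma,1/\kappa)$, which inherits from the asymptotic expansion $a \sim \sum_j \hbar^j a_j$ joint classical behavior in $(\xi,\kappa)$ (since $(1/\kappa)^j$ produces homogeneity $-j$) together with classical behavior in $\sigma$ of degree $m'$. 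This is exactly the form of a GU oscillatory integral associated to the intersecting pair $(\widetilde{\Delta^n},\widetilde{\Fn})$: full stationary phase in $(s,\xi,\sigma,\kappa)$ recovers the homogenized diagonal, while leaving free the direction in which the amplitude is only classical (and not Schwartz) traces out the homogenized flow-out.

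The main obstacle will be the Poisson-summation step together with the bookkeeping of orders. One must verify that the translated copies arising from Poisson's formula, supported at $\theta - \theta' \in 2\pi\bbZ\setminus\{0\}$, do not contribute to the wave front set of $\Psi$ near $\widetilde{\Delta^n}\cup \widetilde{\Fn}$, and one must track carefully how the $\hbar$-expansion of $a$ converts into the joint classical expansion in $(\xi,\kappa)$ so that the bi-orders match those of the GU class $I^{\ell,m}$. In the global assembly, the Bohr-Sommerfeld condition \eqref{Eq:BohrSomm} is precisely what guarantees that the action phase on $\widetilde{\F}$ is single-valued modulo $2\pi$, so that the Fourier coefficients in $\theta - \theta'$ glue into a consistent $\hbar = 1/N$ family on the manifold. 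The converse direction follows by inverting the argument: given $u \in I^{\ell,m}(\widetilde{\Delta},\widetilde{\F})$, its $N$-th Fourier coefficient in $\theta - \theta'$, rescaled by $p = \xi/N$, recovers an oscillatory integral of the form \eqref{OscInt} whose amplitude has the required classical expansion in $\hbar$, thanks to the joint homogeneity of the GU symbol in $(\xi,\kappa)$.
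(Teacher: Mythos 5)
Your proposal is correct in outline, at roughly the same level of detail as the paper's own sketch, and it shares the same global strategy (reduce to the model case through the conjugating FIOs and the intertwining properties of the $\h$-transform from \cite{PU}); but the model-case computation follows a genuinely different route. The paper neither rescales $p$ nor uses Poisson summation: taking first an $\h$-independent amplitude, it sums the geometric series $\sum_{k\ge 1}e^{ik[(x_1-y_1-s)p_1+(x'-y')p'+(\theta-\alpha)]}$ in closed form to produce the factor $e^{i\Phi}/(1-e^{i\Phi})$, and then identifies the $\h$-transform as the push-forward, under $(s,x,y,\theta,\alpha,p)\mapsto(x,y,\theta,\alpha)$, of the product of that factor (conormal to the hypersurface $\Phi=0$) with $\Upsilon=\int e^{is\sigma}a\,d\sigma$ (conormal to $s=0$), invoking the product-of-conormals/push-forward description of Guillemin--Uhlmann distributions; the general $\h$-dependent amplitude is then treated as an asymptotic sum of $\theta$-derivatives and $\theta$-integrals of this special case. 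You instead substitute $p=\xi/N$ and Poisson-sum to trade $\sum_{N\ge 1}$ for $\int_0^\infty d\kappa$, which directly exhibits the homogeneous GU oscillatory-integral normal form with an amplitude that is jointly classical in $(\xi,\kappa)$ and classical of order $m'$ in $\sigma$; this buys a one-step absorption of the $\h$-expansion of $a$ into the homogeneous symbol and makes the bi-order bookkeeping transparent, at the cost of the Poisson-summation step, which is precisely where care is needed: the summand depends on $N$ through the amplitude as well as through $e^{iN(\theta-\theta')}$, the truncation $N\ge 1$ versus $\kappa>0$ must be shown harmless, and your phrase about errors ``concentrated at $\theta-\theta'\in 2\pi\bbZ\setminus\{0\}$'' only makes sense in a lifted chart (on $S^1$ those points coincide with $\theta=\theta'$; the correct claim is that in a chart with $\theta-\theta'\in(-\pi,\pi)$ only the $k=0$ Poisson mode is singular, the remaining modes summing to something smooth there, with periodicity handling the rest). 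You flag these verifications yourself and they are routine; your converse, via extraction of the $N$-th Fourier coefficient, matches the paper's appeal to the analogous argument in \cite{PU}.
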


\begin{proof}
We sketch the ideas of the proof in the model case. Let $A(x,y,\h)$ be as in equation \eqref{OscInt}. Let us assume first that the amplitude $\frac{1}{(2\pi \h)^n}a$ does not depend on $\h$ so that
\[
A(x,y,\h)=\int e^{i\h^{-1}[ (x_1-y_1-s)p_1-(x'-y')p' ]+is\sigma}a(s,x,y,p,\sigma)dsdp d\sigma ,
\]
where $a(s,x,y,p,\sigma)$ is a classical symbol in $\sigma$ of order $m'=m-1/2$, and compactly supported in $s,x,y,p$. 
The $\h$-transform of $A$ is then
\[
A(x,\theta,y,\alpha)=\sum_{k=1}^{\infty} \int e^{ik [(x_1-y_1-s) p_1+(x'-y') p'+(\theta-\alpha)]+i s\sigma}a(s,x,y,p,\sigma)ds dp d\sigma
\]
\[
=\int \frac{e^{i[ (x_1-y_1-s)p_1+(x'-y')p'+(\theta-\alpha) ]}}{1-e^{i[ (x_1-y_1-s)p_1+(x'-y') p'+(\theta-\alpha) ]}}e^{is\sigma}a(s,x,y,p,\sigma)dsdpd\sigma .
\]
This distribution is the push-forward under the projection $(s,x,y,\theta,\alpha,p)\to (x,y,\theta,\alpha)$ of the product of the
distributions
\[
\Gamma(s,x,y,\theta,\alpha,p) = \frac{e^{i[ (x_1-y_1-s)p_1+(x'-y') p'+(\theta-\alpha) ]}}{1-e^{i[ (x_1-y_1-s)p_1+(x'-y') p'+(\theta-\alpha) ]}}
\]
and
\[
\Upsilon(s,x,y,\theta,\alpha,p) = \int e^{is\sigma}a(s,x,y,p,\sigma) d\sigma .
\]
$\Gamma$ 
is a distribution in space conormal to the hypersurface 
$(x_1-y_1-s)p_1+(x'-y') p'=-(\theta-\alpha)$,
while $\Upsilon$ is a distribution conormal to $s=0$.
Therefore, $A(x,\theta,y,\alpha)$ is a Guillemin-Uhlmann distribution associated to the pair $\left( \widetilde{\Delta^n} , \widetilde{\Fn} \right)$. 
The general case (i.e. when $a$ also depends on $\h$) is an asymptotic sum of derivatives and integrals (with respect to the $\theta$ variables) of the 
previous case.  The converse is also true, and the proof is analogue to that in \cite{PU} for semiclassical states.
\end{proof}

This proposition together with Proposition 2.4 of \cite{PU}, relating the frequency set of an $\h$-dependent vector and
the wave-front set of its $\h$ transform, implies part (1) of Theorem \ref{Main}.

The symbols of operators in $J^{\ell, m}(\Delta, \mathcal{F}\partial X_c)$ are the reduction of the symbols of its $\h$-Transform,
in the following sense. Let $\x,\y \in \mathcal{F}\partial X_c \setminus \Sigma$, $\x=\Phi_s^{P}(\y)$ for some $s\in \mathbb{R}$, and take 
\[
(\widetilde x,\widetilde y) =\left( \Phi_s^{\widetilde{P}} (\y, e^{i\theta=0}=1,\kappa=1), (\y, e^{i\theta=0}=1,\kappa=1)\right) \in \widetilde \Fn.
\]
Using \eqref{eq:LiftedFlow}, we obtain an isomorphism between $T_{(\widetilde x,\widetilde y)} \widetilde{\mathcal{F}\partial X_c}$ and $T_{(\x,\y)} \mathcal{F}\partial X_c \times \mathbb{R} \times \mathbb{R}$, which leads to
\begin{equation}
\label{eq:SymbolDownstairs}
\left | T_{(\widetilde x, \widetilde y)} \widetilde{\mathcal{F}\partial X_c} \right |^{1/2} \cong \left | T_{(\x, \y)} \mathcal{F}\partial X_c \times \mathbb{R}\times \mathbb{R} \right|^{1/2} \cong \left | T_{(\x,\y)} \mathcal{F}\partial X_c \right |^{1/2}
\end{equation}

Therefore, every half-density in $T_{(\widetilde x, \widetilde y) } \widetilde{\mathcal{F}\partial X_c}$ will define a half-density in $T_{(\x,\y)} \mathcal{F}\partial X_c$. Let $\widetilde{\Sigma}=\widetilde{\mathcal{F}\partial X_c} \cap \widetilde{\Delta}$. For each family $A\in J^{\ell,m}$, denote its $\h$-Transform by $\widetilde A$. There is a well defined symbol map 
\[
\widetilde{\sigma_1}=\sigma_1\left( \widetilde A \right)_{\widetilde{\mathcal{F}\partial X_c}-\widetilde{\Sigma}} \in 
C^{\infty}\left( \widetilde{\mathcal{F}\partial X_c}\setminus\widetilde{\Sigma},\widetilde{\Omega_1}\bigotimes \widetilde{L_1} \right),
\]
where $\widetilde{\Omega}_{j}$ is the bundle of half-densities on $\widetilde{\mathcal{F}\partial X_c}$, and $\widetilde{L_i}$ is the corresponding Maslov bundle. Ignoring Maslov factors, we can define the symbols on $\mathcal{F}\partial X_c \setminus \Sigma$ by the identification \eqref{eq:SymbolDownstairs}, i.e., restricting the symbol of the $\h$-Transform to $\theta=0$, $\kappa=1$ ($\sigma_1=\widetilde{\sigma_1}_{\big|_{\theta=0,\kappa=1}}$). The construction of the principal symbol on the diagonal is similar.

\bigskip
Let $\Sigma^n=\Delta^n \cap \Fn$, $\widetilde{\Sigma}^n=\widetilde{\Delta^n} \cap \widetilde{\Fn}$.  In the model case, we know by \cite{GU} that $A(x,\theta,y,\alpha)$ is microlocally in 
\[
I^{\ell+m}\left( \mathbb{R}^{n}\times S^1 \times \mathbb{R}^n \times S^1 ~;~ \widetilde{\Delta^n}\setminus \widetilde{\Sigma}^n \right) , \text{ and }I^{\ell}\left( \mathbb{R}^{n} \times S^1 \times \mathbb{R}^{n} \times S^1 ~;~\widetilde{\Fn} \setminus \widetilde{\Sigma}^n \right).
\]
Therefore $A_{\hbar}$ is microlocally in 
$\text{sc-}I^{\ell+m}\left( \bbR^n \times \bbR^n ; \Delta^n \setminus \Sigma^n \right)$ and \\
 $\text{sc-}I^{\ell} \left( \bbR^{n} \times \bbR^n ; \Fn 
\setminus \Sigma^n \right)$ in the sense that the $\h$-transform is microlocally in their corresponding spaces. We have proved the following

\begin{proposition}
\label{IntersJs}
\[
\cap_{\ell}J^{\ell,m}= \text{sc-} I^{\infty}\left( \bbR^n \times \bbR^n ; \Delta^{n} \right)
\]
and
\[
\cap_{m}J^{\ell,m}=\text{sc-} I^{\ell}\left( \bbR^n \times \bbR^n ; \Fn \right).
\]
\end{proposition}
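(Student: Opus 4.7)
My plan is to obtain both identities by direct analysis of the amplitude class $\calA^{\ell,m}_{\text{classical}}$ in the model oscillatory integral \eqref{OscInt}, using the clean microlocal decomposition of the summands $A_0, A_1, \omega_i$ from Definition \ref{def:AlgModelCase}. An alternative route is to invoke the $\h$-transform correspondence of Theorem \ref{SymbCalculus} and deduce the result from the analogous intersection identities for the two-Lagrangian calculus of \cite{GU} on $M\times S^1\times M\times S^1$; I shall sketch the direct approach, which is self-contained.

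For the second identity, $\cap_m J^{\ell,m}=\text{sc-}I^{\ell}(\bbR^n\times\bbR^n;\Fn)$, I would start from the observation that if $A$ lies in the intersection, its oscillatory-integral amplitude $a$ must satisfy the estimates \eqref{AmplitudeIneq} for every $m$, and so $a(s,x,y,p,\sigma,\h)$ is Schwartz in $\sigma$. Carrying out the $\sigma$-integral first in \eqref{sense} then produces a function $b(s,x,y,p,\h)$ which is Schwartz in $s$ (with compact support in the remaining variables), and the remaining integral
\[
\frac{1}{(2\pi\h)^n}\int e^{\frac{i}{\h}[(x_1-y_1-s)p_1+(x'-y')p']}\, b(s,x,y,p,\h)\, ds\, dp
\]
is a standard semiclassical oscillatory integral whose phase has critical set producing exactly the flow-out Lagrangian $\Fn$, with leading $\h$-order $\ell$. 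The $F_2$ summand already lies in $\text{sc-}I^\ell(\Fn)$, while the $F_1$ summand, being a semiclassical FIO of order $\ell+m$ on $\Delta^n\setminus\Sigma^n$ for every $m$, is microlocally residual and hence smoothing.

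For the first identity, $\cap_\ell J^{\ell,m}=\text{sc-}I^\infty(\bbR^n\times\bbR^n;\Delta^n)$, the argument is symmetric: membership in $J^{\ell,m}$ for every $\ell$ forces the classical expansion $a\sim\sum_{j\ge -\ell}\h^j a_j$ to begin arbitrarily late, so every coefficient $a_j$ must vanish and $a$ is $O(\h^\infty)$. Meanwhile $F_1$ and $F_2$, of respective orders $\ell+m$ and $\ell$, become microlocally residual as $\ell\to -\infty$ with $m$ fixed. Hence $A$ is microlocally $O(\h^\infty)$ on both Lagrangians, which is precisely what $\text{sc-}I^\infty(\Delta^n)$ denotes here (the residual class, viewed as trivially associated to the diagonal).

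The main obstacle will be ensuring uniformity: one must verify that the Schwartz-in-$\sigma$ condition promotes ``semiclassical state on $\Fn\setminus\Sigma^n$'' to ``semiclassical state on all of $\Fn$'' (i.e.\ that the singularity at $\Sigma^n$ truly disappears, rather than merely becoming lower-order), and symmetrically that vanishing of all $\h$-expansion coefficients suffices for microlocal smoothing across $\Sigma^n$. Both points reduce to standard stationary-phase estimates applied uniformly in the parameter being intersected out, exploiting the compact support of $a$ in $(s,x,y,p)$; they also follow, via Theorem \ref{SymbCalculus}, from the analogous uniformity statements in the homogeneous two-Lagrangian calculus of \cite{GU}.
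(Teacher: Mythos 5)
Your fallback route --- passing through the $\h$-transform of Theorem \ref{SymbCalculus} and importing the intersection identities of the homogeneous two-Lagrangian calculus of \cite{MU,GU} --- is exactly how the paper establishes this proposition (it is stated immediately after the $\h$-transform discussion, with the homogeneous facts supplying the content), so that part of your plan is sound. The direct argument you actually sketch, however, has a gap at its first step: an element $A\in\bigcap_m J^{\ell,m}$ does not come equipped with a single oscillatory-integral amplitude satisfying \eqref{AmplitudeIneq} for every $m$. Membership in $J^{\ell,m}$ for each separate $m$ only yields an $m$-dependent decomposition $A=A_0^{(m)}+F_1^{(m)}+F_2^{(m)}$ with an $m$-dependent amplitude $a^{(m)}$, so you may not assert that ``the amplitude $a$ is Schwartz in $\sigma$'' and integrate it out, nor refer to ``the $F_1$ summand \dots for every $m$''. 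For the $\bigcap_\ell$ identity this flaw is harmless: the desired conclusion is just the residual $O(\h^\infty)$ estimate on the kernel itself (which is indeed what $\text{sc-}I^{\infty}(\Delta^n)$ denotes here, as you correctly read it), and such an estimate can be extracted from each $\ell$-representation separately since the kernel does not depend on the representation.

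For $\bigcap_m J^{\ell,m}=\text{sc-}I^{\ell}\left(\bbR^n\times\bbR^n;\Fn\right)$, by contrast, the conclusion is structural, and the whole content of the statement is precisely the production of \emph{one} representative whose amplitude is rapidly decreasing in $\sigma$ --- equivalently, showing that the symbol on $\Fn\setminus\Sigma^n$, which a priori blows up at $\Sigma^n$, extends smoothly across $\Sigma^n$, and then performing an asymptotic summation so that the remainder is residual. That step is not ``standard stationary-phase estimates applied uniformly in the parameter being intersected out''; it is the semiclassical counterpart of the nontrivial homogeneous fact $\bigcap_{l} I^{p,l}(\Lambda_0,\Lambda_1)=I^{p}(\Lambda_1)$ of \cite{MU,GU}, and it is exactly what the paper imports through the two-way $\h$-transform characterization rather than proving directly. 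So either supply this representation-independent argument in the model case, or carry out your alternative route in full --- which then coincides with the paper's proof.
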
 


\subsection{A symbolic compatibility condition}

Recall that the foliation of $\partial X_c$ is fibrating, i. e., there exists a $C^\infty$ Hausdorff manifold $S$ and a smooth fiber map 
\begin{equation}
\label{eq:fibration}
 \pi:\partial X_c \to S,
\end{equation}
whose fibers are the connected leaves of the foliation defined Section \ref{Sec:DefOnManifolds}. Elements of $\mathcal{F}\partial X_c$ are pairs 
of points in $\partial X_c$ that lie in the same fiber of $ \pi$. 

Generalizing a construction in \cite{GuS2},  given $s\in S$ 
let $\mathfrak{SO}_s$ be the $*-$algebra of all pseudodifferential operators, acting on the space of half-densities 
$C^{\infty}\left( |F_s|^{1/2} \right)$, where $F_s$ is the fiber of 
$\pi: \partial X_c \to S$ above $s$. 
Let $\mathfrak{SO}$ be the sheaf of $*-$algebras on $S$ whose stalk at $s$ is $\mathfrak{SO}_s$. We will say that a section $k$ of 
$\mathfrak{SO}$ is smooth if the Schwartz kernel of the operator $k(s)$ depends smoothly on $s \in S$. 
The Schwartz kernel theorem, applied fiber-wise to the fibers of $\pi$, together with the natural symplectic structure
of $S$ yield the following:

\begin{proposition}
\label{prop:AlgerbaSheaf}
The vector space of smooth sections of the sheaf $\mathfrak{SO}$ is naturally 
isomorphic to the space of half-densities on $\mathcal{F} \partial X_c \setminus \Sigma$ that extend to $\mathcal{F} \partial X_c$ as a conormal distribution to $\Sigma$
\end{proposition}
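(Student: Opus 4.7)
The plan is to exploit the fibration $\pi:\partial X_c \to S$ to rewrite both sides of the claimed isomorphism as data living fiber-wise, and then invoke the Schwartz kernel theorem on each fiber. First I would note that $\F$ itself inherits a natural fibration $\tilde\pi: \F \to S$, $(\x,\y) \mapsto \pi(\x) = \pi(\y)$, whose fiber over $s \in S$ is exactly the product $F_s \times F_s$; moreover the intersection $\Sigma = \Delta \cap \F$ intersects this fiber in the diagonal $\Delta_{F_s} \subset F_s \times F_s$. Consequently, a distribution on $\F$ which is smooth off $\Sigma$ and conormal to $\Sigma$ restricts, on each fiber $F_s\times F_s$, to a distribution smooth off $\Delta_{F_s}$ and conormal to $\Delta_{F_s}$ -- which is precisely the datum of a Schwartz kernel of a pseudodifferential operator on $F_s$.

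Second, I would use the symplectic structure on $S$ (obtained by Marsden-Weinstein reduction of the canonical form on $T^*M$ along $\partial X_c$) to canonically trivialize half-densities on the base. The form $\omega_S$ determines a canonical density $|\omega_S^{n-1}/(n-1)!|$, hence a canonical trivialization of $|S|^{1/2}$. Combining this with the general splitting for a smooth fibration,
\[
|\F|^{1/2} \cong \tilde\pi^*|S|^{1/2} \otimes |T^{\mathrm{vert}}\F|^{1/2},
\]
gives a canonical identification $|\F|^{1/2} \cong |T^{\mathrm{vert}}\F|^{1/2}$. Under this identification, a half-density on $\F$ is the same as a smoothly varying family $s \mapsto \mu(s)$ of half-densities on the fibers $F_s \times F_s$.

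Third, I would apply the Schwartz kernel theorem to each fiber. For each $s$, half-densities on $F_s \times F_s$ which are smooth off $\Delta_{F_s}$ and conormal along $\Delta_{F_s}$ are precisely the Schwartz kernels of elements of $\mathfrak{SO}_s$, acting on $C^{\infty}(|F_s|^{1/2})$. Combining the two previous steps yields a bijection between half-densities on $\F\setminus\Sigma$ extending conormally across $\Sigma$ and assignments $s \mapsto k(s) \in \mathfrak{SO}_s$; naturality and linearity are manifest from the construction.

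The main obstacle is the bookkeeping of smoothness in the parameter $s$ and the compatibility of the canonical trivialization $|\F|^{1/2}\cong|T^{\mathrm{vert}}\F|^{1/2}$ with the sheaf structure of $\mathfrak{SO}$: one must check that the two notions of smoothness -- smoothness of the family of Schwartz kernels as demanded in the definition of a smooth section of $\mathfrak{SO}$, and smoothness of the conormal distribution on $\F$ -- actually coincide under the fiber-wise identification. This is a standard verification once the symplectic normalization on $S$ is fixed, and no additional choices are needed.
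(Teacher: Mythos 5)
Your proposal is correct and follows essentially the same route as the paper: identify $\left| T\F \right|^{1/2}$ with half-densities along the fibers $F_s\times F_s$ by using the canonical half-density on $T_sS$ furnished by the symplectic structure of $S$, and then apply the Schwartz kernel theorem fiber-wise to recover smooth sections of $\mathfrak{SO}$ as distributional half-densities on $\F$ conormal to $\Sigma$. The only inessential difference is that you derive the bundle identification directly from the single exact sequence of the fibration $\tilde\pi:\F\to S$, whereas the paper assembles the same canonical isomorphism from the fiber-product sequence $0\to T_{\gamma}\F\to T_{\gamma_1}\partial X_c\oplus T_{\gamma_2}\partial X_c\to T_sS\to 0$ together with the two fibration sequences for $\pi:\partial X_c\to S$.
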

\begin{proof}
Let  $\gamma =( \gamma_1, \gamma_2) \in  \F$, $s= \pi( \gamma_1)= \pi( \gamma_2) \in S$. 
One then has the following fiber product diagram:
\[
\xymatrix {
& T_{\gamma} \F  \ar@{->}^{d\pi_1}[r] \ar@{->}_{d\pi_2}[d] & T_{ \gamma_1}\partial X_c \ar@{->}^{d  \pi}[d] \\
& T_{ \gamma_2}\partial X_c \ar@{->}^{d \pi}[r] & T_{s}S
}
\]
and so we get the exact sequence
\[
\begin{array}{lll}
\xymatrix{
0 \ar@{->}[r] & T_{\gamma} \F \ar@{->}[r] & T_{ \gamma_1}\partial X_c \oplus T_{ \gamma_2} \partial X_c \ar@{->}[r] & T_{s}S  \ar@{->}[r] & 0
} \\
\xymatrix{
&~~~~~~~~ v~~~~~~ \ar@{|->}[r] &~~~ \left( d\pi_1 v, d\pi_2 v \right) & &
} \\
\xymatrix{
& & ~~~~~~~~~~~~~~~~~~~~~~(v_1, v_2) ~~~~~~~~\ar@{|->}[r] &~~~ d \pi (v_2)-d \pi (v_1) &
}
\end{array}
\]
This gives a natural identification
\[
\mathbb{C}  \cong  \left| T_{\gamma} \F \right|^{1/2}  \otimes  
\left| T_{ \gamma_1}\partial X_c \oplus T_{ \gamma_2}\partial X_c \right|^{-1/2} \otimes  \left| T_s S\right|^{1/2} .
\]
Since $S$ is a symplectic manifold, there is a canonical half-density on $T_s S$, and we get an identification
\[
\big| T_{\gamma}  \F \big|^{1/2} \cong \big| T_{ \gamma_1} \partial X_c \oplus T_{ \gamma_2} \partial X_c \big|^{1/2} \cong \big| T_{ \gamma_1}\partial X_c \big|^{1/2} \otimes \big| T_{ \gamma_2} \partial X_c \big|^{1/2}.
\]
Finally, given a half density in $T_{ \gamma_k}\partial X_c$, $k=1,2$, we need to get a half-density in $T_{\gamma_k}F_s$, where $F_s$ is the fiber of $\pi$ above $s$. We have the following exact sequence:
\[
\xymatrix{
0 \ar@{->}[r] & \ker~d \pi_{ \gamma_k} \ar@{->}[r] & T_{ \gamma_k}\partial X_c \ar@{->}[r] & T_s S \ar@{->}[r] & 0
}
\]
which, by the same process as above, gives an identification
\[
\big| T_{\gamma_k}F_s \big|^{1/2} = \big| \ker~d \pi_{ \gamma_k} \big|^{1/2} \cong \big| T_{\gamma_k} \partial X_c \big|^{1/2}
\]
Hence
\[
\big| T_{\gamma}  \F \big|^{1/2} \cong \big| T_{\gamma_1}  F_s \big|^{1/2}\otimes \big| T_{\gamma_2}  F_s \big|^{1/2}
\cong \big| T_{(\gamma_1,\gamma_2)} (F_s\times F_s) \big|^{1/2}
\]
This gives a smooth isomorphism between two line bundles over $\F$:  $\big| T \F \big|^{1/2}$ and the line bundle 
$\Upsilon\to\F$ whose fiber over $(\gamma_1, \gamma_2)$ is $ \big| T_{(\gamma_1,\gamma_2)} (F_s\times F_s) \big|^{1/2}$,
where $\pi(\gamma_1) = s = \pi(\gamma_2)$.  Clearly a section of the sheaf $\mathfrak{SO}$ is a distributional section
of $\Upsilon$ conormal to $\Sigma$, and by the previous isomorphism this is equivalent to a distributional 
section of $\big| T \F \big|^{1/2}$ conormal to $\Sigma$.
\end{proof}

The previous isomorphism yields an algebra structure on the space of distributional half densities on $\mathcal F\partial X_c$
which are conormal to $\Sigma$.  Analogously as in \cite{GuS2} (Proposition 2.7), one can see that the algebraic structure
on this space is given, away from $\Sigma$, by the composition of half densities regarded as symbols of Fourier integral operators
associated to $\mathcal F\partial X_c$.  

\medskip
Let us now take $A\in J^{\ell,m}$.  The symbol $\sigma_1 (A)$ in $\mathcal{F}\partial X_c \setminus \Sigma$ blows-up as the point 
where $\sigma_1$ is evaluated tends to the intersection. In fact, in \cite{AUh} it was shown that this symbol
has a natural extension to a 
distribution conormal to $\Sigma$. 
Using the same identification above, this determines the kernel of a Pseudodifferential operator on the fiber 
above each point of $S$.  We have proved:

\begin{proposition}
\label{prop:Symb1PsiDO}
For $A\in J^{\ell,m}\left( M\times M, \Delta, \mathcal{F}\partial X_c \right)$, the symbol $\sigma_1(A)$ can be identified with 
a global section of the sheaf $\mathfrak{SO}$, that is, with a family of
classical pseudodifferential operators of order $m'=m-1/2$ acting on fibers of $\pi: \partial X_c\to S$ (orbits in the flow-out).
\end{proposition}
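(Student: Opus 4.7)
The plan is to combine the conormal extension property of $\sigma_1(A)$ across $\Sigma$ with the sheaf isomorphism of Proposition \ref{prop:AlgerbaSheaf}. First I would work in the model case, where formula \eqref{symbolsModelCase} gives, up to the half-density factor,
\[
\sigma_1(A)(s,x,y_1,p') = \sqrt{2\pi}\int e^{is\sigma}\, a_{-\ell',m'}(s,x,y,p,\sigma)\, d\sigma,
\]
restricted to $y'=x'$, $p_1=0$, $s=x_1-y_1$, where $a_{-\ell',m'}$ is a classical symbol in $\sigma$ of order $m'=m-1/2$. Away from $s=0$ the integral is smooth with rapid decay; across $s=0$ the one-variable Fourier transform of a classical symbol of order $m'$ is a distribution in $s$ conormal to $\{s=0\}$ of conormal order $m'$. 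Since $s$ is a defining function for $\Sigma^n = \Delta^n \cap \Fn$ inside $\Fn$, this realizes $\sigma_1(A)$ as a distributional half-density on $\Fn$ that is conormal to $\Sigma^n$.

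Next I would globalize using Definition \ref{def:JGenCase}. Writing $A = A_0 + A_1 + \sum \omega_i$, the term $A_0$ is microlocally supported on $\Delta\setminus\Sigma$ and contributes nothing to $\sigma_1$; the term $A_1 \in \text{sc-}I^\ell(M\times M;\F)$ has a smooth principal symbol on all of $\F$, which is trivially a conormal extension; and each $\omega_i = F_i(v_i)$ has $\sigma_1$ obtained by transporting the model-case conormal distribution via the semiclassical FIO $F_i$ associated to $\chi_i^{-1}$. Since clean intersecting-pair structures and conormality are preserved under canonical transformations, and since the independence of the symbol from the choice of $F_i$ is exactly the point verified implicitly in Proposition \ref{FSetGenCase} (following the pattern of \cite{GU}), the patched-together $\sigma_1(A)$ is a well-defined distributional half-density on $\F$ conormal to $\Sigma$.

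Finally, I would invoke Proposition \ref{prop:AlgerbaSheaf}: distributional half-densities on $\F$ conormal to $\Sigma$ correspond to smooth sections of $\mathfrak{SO}$. Indeed, over each $s\in S$ the value of the half-density, combined with the isomorphism $|T\F|^{1/2}\cong |T(F_s\times F_s)|^{1/2}$ of Proposition \ref{prop:AlgerbaSheaf}, becomes the Schwartz kernel of an operator on half-densities on the fiber $F_s$. Conormality along $\Sigma$ — whose fiber-wise image is the diagonal of $F_s\times F_s$ — is precisely the Schwartz-kernel characterization of a classical pseudodifferential operator, and under the standard conventions the conormal order $m'$ on $\F$ matches the pseudodifferential order $m'=m-1/2$ on the fibers.

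I expect the main obstacle to be the conormal extension across $\Sigma$ itself: showing that the a priori singularity of $\sigma_1(A)$ at $\Sigma$ is in fact of genuine conormal type rather than something worse. This hinges on the careful compatibility, built into Definition \ref{def:AlgModelCase}, between the $\h$-expansion and the classical symbolic expansion in $\sigma$, and was established in the homogeneous setting by \cite{AUh}; via the $\h$-transform of Theorem \ref{SymbCalculus} the same statement descends to the semiclassical model case here. A secondary but non-trivial bookkeeping task is to track order conventions consistently across the Fourier transform in $\sigma$, the symbol-to-half-density identification, and the fiber-wise Schwartz kernel isomorphism, so that the final pseudodifferential order is confirmed to be $m-1/2$.
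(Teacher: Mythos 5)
Your proposal is correct and follows essentially the same route as the paper: the conormal extension of $\sigma_1(A)$ across $\Sigma$ (which the paper attributes to \cite{AUh}, and which you verify in the model case as the Fourier transform in $\sigma$ of a classical symbol of order $m'$) combined with the identification of conormal half-densities on $\F$ with smooth sections of $\mathfrak{SO}$ from Proposition \ref{prop:AlgerbaSheaf}. The only quibble is notational: in the model-case formula the relevant amplitude is $a_{-\ell'}$ (the full classical symbol of order $m'$), not its leading homogeneous piece $a_{-\ell',m'}$, though your description of it is the correct one.
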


For each $s\in S$ and $F_s$ the corresponding fiber above $s$, let us denote this operator by ${\sigma_1(A)}_s$:
\[
\xymatrix{
C^{\infty}\left( F_s \right) \ar@{->}^{{\sigma_1(A)}_s}[r] & C^{\infty}\left( F_s \right)
}
\]

As a result, there is a well-defined symbol
\[
\sigma\left( \sigma(A)_s \right) :T^{*}F_s\setminus 0 \to \mathbb{C}.
\]
As we will now see, this symbol is related to the symbol $\sigma_0(A)$ of $A$ on the diagonal.  
(This is the compatibility of the symbols of $A$ announced earlier.)

Let $P$ be a defining function of $\partial X_c$ with a $2\pi$-periodic flow on $\partial X_c$.
We have the following diffeomorphism
\[
\begin{array}{cc}
\xymatrix{
\partial X_c \times S^1 \ar@{->}[r] & \mathcal{F}\partial X_c
} \\
\xymatrix{
(\x,t) ~\ar@{|->}[r] & ~~ \left(\Phi_{t}^{P}\x , \x \right)
}
\end{array}
\]
from which one can see that, for any $\gamma =(\x,\x) \in \Sigma =\Delta \cap \mathcal{F}\partial X_c$, 
there is a natural isomorphism
\[
N_{\Sigma}^{\F}:= T_{\gamma}\F / T_{\gamma}\Sigma \cong T_{\x}F_s 
\] 
($N_{\Sigma}^{\F}$ is the normal space to $\Sigma$ in $\F$ at $\gamma$). 
Therefore, for each $\x \in \partial X_c$, $T_{\x}^{*}F_s $ is isomorphic to the dual space 
$\left(N_{\Sigma}^{\F}\right)^*$.

Now let $N_\Sigma^{\Delta}:=T_{\gamma} \Delta / T_{\gamma} \Sigma$ be the normal space to 
$\Sigma$ in $\Delta$ at $\gamma$.  By \cite{GU}, $N_\Sigma^{\mathcal{F}\partial X_c}$ and $N_\Sigma^{\Delta}$ are 
supplementary Lagrangian subspaces of the two-dimensional symplectic vector space $W=\left( T_{\gamma} \Sigma \right)^{\perp} / T_{\gamma} \Sigma$.
Therefore, $N_\Sigma^{\mathcal{F}\partial X_c}$ and $N_\Sigma^{\Delta}$ are in duality with each other
(they are canonically paired by the symplectic form of $W$).  
In the end we obtain a natural isomorphism
\[
T_{\x}^{*}F_s  \cong \left(N_{\Sigma}^{\F}\right)^* \cong N_\Sigma^{\Delta}.
\]

The symbol of $A$ on the diagonal belongs to a class $S^{m'}\left( \Omega_0; \Delta, \Sigma \right)$ of smooth functions
on $\Delta\setminus\Sigma$ which blow up at a prescribed rate at $\Sigma$ (see \cite{GU} for more details). 
Every element in this class determines a smooth function on $N_\Sigma^{\Delta}\setminus\{0\}$. 
The \emph{compatibility condition} alluded to in the Introduction is the following:

\begin{theorem}
\label{Thm:SymbComp}
The symbol of $A\in J^{\ell,m}\left( M\times M, \Delta, \mathcal{F}\partial X_c \right)$
on the flow-out, identified with a family $\{\sigma_1(A)_s\}_{s\in S}$
of pseudodifferential operators along the fibers $F_s$, 
satisfies that for each $\x \in F_s$
\begin{equation}
\label{SymbCompCond}
\begin{array}{ll}
\sigma(\sigma_1(A)_s)_{\x} (\tau) = \lim_{\substack{\y \to \x \\ \y \in X_c\setminus \partial X_c }} \frac{\sigma_{0}(A)(\y,\y)}{P^{m'}(\y)}
 \\ \text{and}\\  \\
\sigma(\sigma_1(A)_s)_{\x} (-\tau) = \lim_{\substack{\y \to \x \\ \y \in T^{*}M \setminus X_c}} \frac{\sigma_{0}(A)(\y,\y)}{P^{m'}(\y)},
\end{array}
\end{equation}
where $\tau\in T_{\x}^*F_s$ is the dual of the Hamilton field of $P$ at $\x$ regarded as an element in $T_{\x}F_s$,
and the two limits are taken from the interior of $X_c$ and the exterior of $X_c$, respectively. Moreover, this condition is intrinsic, i.e., it does not depend on the choice of $P$.
\end{theorem}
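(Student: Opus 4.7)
The plan is to reduce to the model case and read off both sides from the explicit formulas. Both sides of \eqref{SymbCompCond} are manifestly defined in a coordinate-free way, so by Definition \ref{def:JGenCase} it suffices to work in a microlocal chart in which $(\Delta,\F)$ has been identified with $(\Delta^n,\Fn)$ and, by Lemma \ref{Lemma:LocalP}, $P=p_1$. In this chart the Hamilton field of $P$ is $\partial/\partial x_1$, the fiber $F_s$ through $\x$ is the $x_1$-axis, and $\tau\in T^*_\x F_s$ is the covector dual to $\partial/\partial x_1$, which corresponds to $\sigma=+1$ in the coordinate $\sigma$ on $T^*F_s$. We can therefore assume $A$ has the form \eqref{OscInt} with amplitude $a\in\calA^{\ell',m'}_{\text{classical}}$ whose leading term $a_{-\ell',m'}$ is homogeneous of degree $m'=m-1/2$ in $\sigma$.

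At a point $\y=(x,(p_1,p'))$ near $\x=(x,(0,p'))$, the first line of \eqref{symbolsModelCase}, $P(\y)=p_1$, and homogeneity of $a_{-\ell',m'}$ in $\sigma$ give
\[
\frac{\sigma_0(A)(\y,\y)}{P(\y)^{m'}}
 \;=\; 2\pi\, a_{-\ell',m'}\bigl(0,x,x,(p_1,p'),\mathrm{sgn}(p_1)\bigr),
\]
whose limit as $p_1\to 0^{\pm}$ is $2\pi\,a_{-\ell',m'}(0,x,x,(0,p'),\pm 1)$, the two signs corresponding to $\y$ approaching $\x$ from inside and from outside $X_c$, respectively. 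For the left-hand side, Proposition \ref{prop:Symb1PsiDO} identifies $\sigma_1(A)$ with a family of pseudodifferential operators on the fibers $F_s$; inserting the second line of \eqref{symbolsModelCase} into the identification of Proposition \ref{prop:AlgerbaSheaf}, the kernel of $\sigma_1(A)_s$ in the $x_1$-variable (with $x',p'$ as parameters) takes the form
\[
K_s(x_1,y_1)\;\propto\;\int e^{i(x_1-y_1)\sigma}\,a_{-\ell'}\bigl(x_1-y_1,\,(x_1,x'),\,(y_1,x'),\,(0,p'),\,\sigma\bigr)\,d\sigma,
\]
which is a classical pseudodifferential kernel of order $m'$; its principal symbol, obtained by restricting the leading homogeneous part of the amplitude to $y_1=x_1$, is $a_{-\ell',m'}(0,x,x,(0,p'),\sigma)$, so that $\sigma(\sigma_1(A)_s)_\x(\pm\tau)=2\pi\,a_{-\ell',m'}(0,x,x,(0,p'),\pm 1)$, matching the two limits computed above. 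The Fourier-transform relationship between the leading singularities of $\sigma_0$ and $\sigma_1$ noted after \eqref{symbolsModelCase} is exactly the mechanism at work here: the $\sigma\to 0^{\pm}$ homogeneous asymptotics of $\sigma_0$ encode, via the fiber Fourier transform, the $\sigma\to\pm\infty$ asymptotics that constitute the principal symbol of $\sigma_1(A)_s$.

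The intrinsic character is checked by noting that any two defining functions $P,\widetilde P$ for $\partial X_c$ with $2\pi$-periodic Hamilton flow on $\partial X_c$ are related by $\widetilde P=\phi P$ with $\phi>0$ and $\phi|_{\partial X_c}=1$ along the flow, so that $\widetilde\tau=\phi^{-1}\tau$; the factor $\phi^{-m'}$ produced by the homogeneity of $\sigma(\sigma_1(A)_s)_\x$ on the left is cancelled by $\widetilde P^{m'}=\phi^{m'}P^{m'}$ on the right. The main obstacle is the bookkeeping in the half-density identification of Proposition \ref{prop:AlgerbaSheaf}: one must verify that the canonical symplectic half-density on $S$, the $\sqrt{2\pi}$ in the definition of $\sigma_1$, and the Fourier inversion along the fiber combine to yield exactly the constant $2\pi$ that appears in $\sigma_0$, so that \eqref{SymbCompCond} is a genuine equality rather than an equality up to an undetermined universal constant.
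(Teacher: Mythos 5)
Your proposal is correct and follows essentially the same route as the paper's own (very terse) proof, which simply reduces to the model case---where the identity is read off from the explicit formulas \eqref{symbolsModelCase}---and then checks directly that changing $P$ by a multiplicative factor does not alter \eqref{SymbCompCond}; your computation supplies the details the paper leaves as ``immediate.'' One small remark: your assertion that $\phi|_{\partial X_c}=1$ is neither needed nor generally true, and in any case your actual cancellation of the homogeneity factor $\phi^{-m'}$ on the left against $\widetilde P^{m'}=\phi^{m'}P^{m'}$ on the right is the correct (and complete) mechanism, exactly as the paper indicates.
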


The proof reduces to the model case, where it is immediate.  One can also verify directly that changing $P$ by a multiplicative factor
does not alter the relationships (\ref{SymbCompCond}).

The symbols in the flow-out become more natural under the present setting, as can be seen in the following symbolic version of Proposition \ref{CompoModelCase}.
\begin{proposition}\label{prop:SymbCompoModelCase}
The composition of properly supported operators with kernels in $J^{\ell, m}$ and $J^{\tilde\ell, \tilde m}$, respectively, is an operator with kernel in $J^{\ell+\tilde\ell+1/2, m+\tilde m-1/2}$. For any $A \in J^{\ell,m}$, $B \in J^{\tilde \ell,\tilde m}$, we obtain the usual symbol in the diagonal:
\[
\forall \x\in X_c\setminus\partial X_c\quad
\sigma_0 (A \circ B)(\bar x, \bar x)=\sigma_0 (A) (\bar x,\bar x) \sigma_0(B) (\bar x, \bar x),
\]
and for any fiber $F_s$ above $s\in S$,
\[
{\sigma_{1}}(A\circ B)_s=\sigma_{1}(A)_s \circ \sigma_{1}(B)_s\ .
\]
\end{proposition}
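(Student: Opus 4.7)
The plan is to reduce to the model-case computation of Proposition \ref{CompoModelCase} and then reinterpret the resulting integral formulae via the intrinsic identifications set up in Propositions \ref{prop:AlgerbaSheaf} and \ref{prop:Symb1PsiDO}.

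The statement that $A\circ B\in J^{\ell+\tilde\ell+1/2,\,m+\tilde m-1/2}$ is microlocal in nature and follows from Definition \ref{def:JGenCase} by covering a neighborhood of $\Sigma$ by the patches $U_i$, transporting by the canonical transformations $\chi_i$ to the model pair $(\Delta^n,\Fn)$, and invoking Proposition \ref{CompoModelCase}. Away from $\Sigma$, $A$ and $B$ are ordinary semiclassical Fourier integral operators associated to $\Delta\setminus\Sigma$ or $\F\setminus\Sigma$, and closure follows from the standard composition calculus. The diagonal symbol identity $\sigma_0(A\circ B)(\bar x,\bar x)=\sigma_0(A)(\bar x,\bar x)\,\sigma_0(B)(\bar x,\bar x)$ is then just the usual principal symbol multiplication of semiclassical pseudodifferential operators on $\Delta\setminus\Sigma$, which was already verified in the model case.

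The substantive content is the flow-out identity. Fix $s\in S$ and points $\bar x,\bar y\in F_s$. In the model case, the proof of Proposition \ref{CompoModelCase} shows that, modulo Maslov factors,
\[
\sigma_1(A\circ B)(\bar x,\bar y)\;=\;\frac{1}{\sqrt{2\pi}}\int \sigma_1(A)(\bar x,\bar z(t))\,\sigma_1(B)(\bar z(t),\bar y)\,dt,
\]
the integration running over the one-dimensional bicharacteristic joining $\bar x$ to $\bar y$, namely the fiber $F_s$ itself. By Proposition \ref{prop:Symb1PsiDO}, the half-density $\sigma_1(A)\big|_{F_s\times F_s}$ is (under the isomorphism of Proposition \ref{prop:AlgerbaSheaf}) the Schwartz kernel of the classical pseudodifferential operator $\sigma_1(A)_s$ on $F_s$, and similarly for $B$. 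The displayed integral is then literally the kernel of the composition $\sigma_1(A)_s\circ\sigma_1(B)_s$ on $F_s$, which yields the announced identity. Patching via the canonical transformations $\chi_i$ extends this from the model to the manifold case; because the definition of the symbol calculus is independent of the choice of the $\chi_i$ (by Theorem \ref{SymbCalculus} and the remark following Definition \ref{def:JGenCase}), the resulting global symbol identity is well defined.

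The main technical point is verifying that the natural identification $|T\F|^{1/2}\cong |T(F_s\times F_s)|^{1/2}$ furnished by the fiber-product diagram in the proof of Proposition \ref{prop:AlgerbaSheaf} coincides with the one obtained by pulling back via the local flattening maps $\chi_i$. Both arise from the same symplectic reduction of $(T_\gamma\Sigma)^\perp/T_\gamma\Sigma$ and from the symplectic half-density on $T_sS$; the $\frac{1}{\sqrt{2\pi}}$ produced by the model calculation is exactly the normalization forced by this symplectic half-density, so the two prescriptions agree. Once this bookkeeping of half-densities is in place, the proposition reduces cleanly to the model case.
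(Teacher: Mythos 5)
Your proposal is correct and follows the same route the paper intends for this proposition (which it leaves without a written proof): reduce to the model case and apply Proposition \ref{CompoModelCase}, then reinterpret the resulting integral formula as kernel composition on the fibers via the identifications of Propositions \ref{prop:AlgerbaSheaf} and \ref{prop:Symb1PsiDO}, with the $1/\sqrt{2\pi}$ absorbed into the half-density normalization coming from the symplectic structure on $S$. Nothing essentially different from the paper's (implicit) argument, and no gap beyond the same level of bookkeeping the paper itself elides.
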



\subsection{The adjoint}

The class $J^{\ell,m}$ is closed under the operation of taking adjoints, and information about the symbol of the adjoint 
is given in the following
\begin{proposition}
Let $A\in J^{\ell,m}(M\times M; \Delta, \F)$, then the adjoint $A^*$ belongs again to $J^{\ell,m}(M\times M; \Delta, \F)$, and 
\[
\begin{array}{lll}
\sigma_0(A^*)(\x, \x )=\overline{\sigma_0(A)(\x,\x)}\quad \text{ for }   (\x,\x) \in \Delta \setminus \Sigma,\ \text{and} \\ \\
\sigma_1(A^*)_s=\left( \sigma_1(A)_s \right)^*,\quad \text{for each  $s\in S$.}
\end{array}
\]
\end{proposition}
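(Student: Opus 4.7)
The plan is to reduce to the model case, verify the result there by a direct oscillatory integral computation, and transport back to the manifold setting using the definitions from Section \ref{sec:GeneralCase}. Both Lagrangians $\Delta$ and $\mathcal{F}\partial X_c$ are invariant under the swap $(\x, \y) \mapsto (\y, \x)$, so taking adjoints preserves the underlying Lagrangian structure. Starting from the decomposition $A = A_0 + A_1 + \sum_i \omega_i$ in Definition \ref{def:JGenCase}, the pieces $A_0^*$ and $A_1^*$ remain in $\text{sc-}I^{\ell+m}(\Delta \setminus \Sigma)$ and $\text{sc-}I^{\ell}(\mathcal{F}\partial X_c)$ respectively by standard facts about adjoints of semiclassical Fourier integral operators, and each $\omega_i^*$ reduces via $F_i$ (or its adjoint, which is again a semiclassical FIO quantizing $\chi_i$) to applying the model case analysis to $v_i^*$. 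Thus the main work is to show $v^* \in J^{\ell,m}(\bbR^n \times \bbR^n; \Delta^n, \Fn)$ whenever $v \in J^{\ell,m}$ is given as in \eqref{OscInt}.

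In the model case, the computation is direct. Starting from $A^*(x,y,\h) = \overline{A(y,x,\h)}$ and substituting $s \mapsto -s$ in the resulting oscillatory integral, one sees that swapping $x \leftrightarrow y$ in the phase $(x_1-y_1-s)p_1+(x'-y')p'+\h s\sigma$ introduces sign flips on both the spatial and the $s\sigma$ terms that are all undone by the combination of complex conjugation and $s\mapsto -s$. The Schwartz kernel of $A^*$ therefore returns to the standard form \eqref{OscInt}, with new amplitude
\[
a^*(s,x,y,p,\sigma,\h) = \overline{a(-s,y,x,p,\sigma,\h)}.
\]
Because complex conjugation and the diffeomorphism $(s,x,y)\mapsto(-s,y,x)$ preserve the symbol estimates \eqref{AmplitudeIneq} and the asymptotic expansions, $a^*\in \calA^{\ell',m'}_{\text{classical}}$, and its leading components satisfy $a^*_{-\ell',m'}(s,x,y,p,\sigma) = \overline{a_{-\ell',m'}(-s,y,x,p,\sigma)}$.

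The symbol formulas now follow from substitution into \eqref{symbolsModelCase}. Evaluating $a^*_{-\ell',m'}$ at $s=0$, $y=x$, $\sigma = p_1$ yields directly $\sigma_0(A^*)(\x,\x)=\overline{\sigma_0(A)(\x,\x)}$. For the flow-out,
\[
\sigma_1(A^*)(\x,\y) = \sqrt{2\pi}\int \overline{a_{-\ell'}(-s,y,x,p,\sigma)}\, e^{is\sigma}\, d\sigma,
\]
evaluated at $y'=x',\, p_1 = 0,\, s = x_1-y_1$. A short manipulation, taking complex conjugation outside the integral and interchanging the roles of $\x, \y$, gives $\sigma_1(A^*)(\x,\y) = \overline{\sigma_1(A)(\y,\x)}$. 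Under the identification of Proposition \ref{prop:Symb1PsiDO}, the map $\sigma_1(A)\mapsto \{\sigma_1(A)_s\}_{s\in S}$ sends the symbol restricted to $F_s\times F_s$ to the Schwartz kernel of the fiber pseudodifferential operator, and the swap-and-conjugate of a kernel is by definition the kernel of the operator adjoint, so $\sigma_1(A^*)_s = (\sigma_1(A)_s)^*$.

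The main obstacle is not the model calculation, which is essentially immediate, but the verification that the half-density identifications built into Proposition \ref{prop:AlgerbaSheaf} intertwine the swap-and-conjugate operation on distributional half-densities on $\mathcal{F}\partial X_c$ with the pseudodifferential adjoint on each fiber. This amounts to checking naturality of the symplectic identifications with respect to exchanging the two factors, and carefully tracking Maslov factors (which we are ignoring); it is routine but requires some bookkeeping that parallels the analogous verification in the homogeneous Guillemin--Uhlmann framework of \cite{GU}.
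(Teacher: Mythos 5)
Your proposal is correct and follows essentially the same route as the paper's proof: reduce to the model case, observe that the adjoint kernel is again of the standard form with amplitude $\overline{a(-s,y,x,p,\sigma,\h)}$, and read off $\sigma_0(A^*)=\overline{\sigma_0(A)}$ and $\sigma_1(A^*)(\x,\y)=\overline{\sigma_1(A)(\y,\x)}$, which under the fiberwise kernel identification is exactly $(\sigma_1(A)_s)^*$. The paper is equally brief about the reduction to the model case and the half-density bookkeeping, so no gap is introduced by your treatment.
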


\begin{proof}
The first statement is as in the usual theory of $\h$-pseudodifferential operators. 
It is enough to prove the rest in the model case. Take $A$ with Schwartz kernel
\[
K(x,y,h)=\frac{1}{(2\pi \h)^n} \int e^{i\h^{-1} ((x-y) \cdot p-s p_1)+is\sigma}a(s,x,y,p,\sigma,\h )ds dp d\sigma,
\]
where $a\in \mathcal{A}_{classical}^{\ell',m'}$, $\ell'=\ell+1/2$, $m'=m-1/2$. The Schwartz kernel of the adjoint is given by
\[
K^* (x,y)=\overline{K(y,x)}=\frac{1}{(2\pi \h)^n} \int e^{i\h^{-1} ((x-y)p-sp_1)+is\sigma}\overline{a(-s,y,x,p,\sigma,\h )} ds dp d\sigma,
\]
where $a(s,x,p,\sigma,\h)$ was replaced by $\overline{a(-s,y,x,p,\sigma,\h )}$.  
Taking $\x=(x,(p_1=0,p')), \y =\phi_s^P \x=((x_1+s,x'),(p_1=0,p'))$, the symbol in the flow-out is given by
\[
\sigma_1(A^*)(\phi_s^P \x,\x)=\sqrt{2\pi}\int \overline{a_{-\ell'}(-s,y,x,p,\sigma)}e^{is\sigma} d\sigma \sqrt{dxdy_1dp'}_{\big |_{x'=y',p_1=0,s=x_1-y_1}}
\]
\[ 
=\sqrt{2\pi} \overline{\int a_{-\ell'}(-s,y,x,p,\sigma) e^{-is\sigma} d \sigma} \sqrt{ dx dy_1dp'}_{\big |_{x'=y',p_1=0,y_1-x_1=-2}}=\overline{\sigma_1(A)(\x,\phi_s^P \x)}
\]
The proof is now clear, since the symbol $\sigma_1(A^*)$ intertwines the variables and takes the conjugate of $\sigma_1(A)$.
\end{proof}


\section{Asymptotics of the trace}

\label{sec:AsympTrace}

\subsection{The trace in case $m'\geq 0$.}
We now assume that $X_c$ is compact and $\partial X_c$ is fibrating, as in section \ref{sec:GeneralCase}. We will prove:

\begin{theorem}\label{Trace1}
Let $\widehat A$ be an operator in the class ${\mathcal A}_{X_c}$.  Then, if $m > 1/2$, 
\begin{equation}\label{}
\tr(\widehat A) = (2\pi)^{-n} \hbar^{-\ell-m-n} \int_{X_c}\sigma_0(x,x,p,-p)dxdp +O(\hbar^{-\ell-m-n+1}),
\end{equation}
where $\sigma_0$ is the symbol of $A$ on the diagonal. If $m=1/2$, 
\begin{equation}\label{}
\tr(\widehat A)=(2\pi)^{-n} \hbar^{-\ell-m-n} \int_{X_c}\sigma_0(x,x,p,-p)dxdp +O(\hbar^{-\ell-m-n} ~ \hbar ~\log(1/\hbar)).
\end{equation}
\end{theorem}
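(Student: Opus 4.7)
The plan is to reduce to the model oscillatory integral of Section \ref{sec:ModelCase} via the local decomposition in Definition \ref{def:JGenCase}, evaluate the diagonal restriction directly, and then use the $\hbar$- and classical symbol expansions of the amplitude. Writing $\widehat{A} = A_0 + A_1 + \sum_i \omega_i$ as in that definition, the $\Psi$DO piece $A_0 \in \text{sc-}I^{\ell+m}(\Delta\setminus\Sigma)$ contributes the standard semiclassical trace on $X_c\setminus\partial X_c$, and the sc-FIO piece $A_1\in\text{sc-}I^\ell(\F)$ has canonical relation meeting $\Delta$ cleanly along the codimension-one submanifold $\Sigma$, so its trace reduces to a lower-order integral over $\Sigma$ absorbable in the remainder for $m\geq 1/2$. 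The essential contribution comes from the oscillatory pieces $\omega_i = F_i v_i$, which via the FIOs $F_i$ reduce to the model form \eqref{OscInt}; the $\mathcal{A}_{X_c}$-assumption translates into the amplitude $a$ being rapidly decreasing as $\sigma\to -\infty$.

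In the model case, setting $y = x$ in \eqref{OscInt} and integrating in $x$ gives
\[
\tr(\widehat{A}) = \frac{1}{(2\pi\hbar)^n}\int e^{is(\sigma - p_1/\hbar)}\,a(s,x,x,p,\sigma,\hbar)\,ds\,dp\,d\sigma\,dx.
\]
I would perform the $s$-integration first (the amplitude has compact $s$-support), producing a function that is Schwartz in $\mu := \sigma - p_1/\hbar$ while inheriting from $a$ its polyhomogeneous structure in $\sigma$. After the substitution $\sigma = \mu + p_1/\hbar$, expanding $a\sim \sum_j\hbar^j a_j$ and $a_j\sim\sum_r a_{j,r}$, and Taylor-expanding $(\mu + p_1/\hbar)^r = (p_1/\hbar)^r\bigl(1 + O(\hbar\mu/p_1)\bigr)$, the Fourier inversion identity $\iint e^{is\mu}f(s)\,ds\,d\mu = 2\pi f(0)$ together with the formula \eqref{symbolsModelCase} for $\sigma_0$ identifies the leading-order contribution as
\[
\frac{1}{(2\pi)^n\hbar^{n+\ell+m}}\int_{p_1>0}\sigma_0(\widehat{A})(x,p)\,dp\,dx,
\]
which is the asserted main term since $\{p_1 > 0\}$ is $X_c$ in the model case and contributions from $p_1<0$ are $O(\hbar^\infty)$ by the $\mathcal{A}_{X_c}$ hypothesis.

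The main obstacle is the remainder estimate, which produces the dichotomy between $m>1/2$ and $m=1/2$. Two sources contribute at order $\hbar^{-\ell-m-n+1}$ with integrands of the form $p_1^{m'-1}$ (where $m' = m - 1/2$): the next classical term $a_{-\ell',\,m'-1}$ in the expansion of $a$, and the first Taylor correction in $\hbar\mu/p_1$ applied to the principal term. When $m' > 0$ (i.e.\ $m > 1/2$), $\int p_1^{m'-1}\,dp_1$ is integrable at $p_1 = 0$ and one obtains a clean $O(\hbar^{-\ell-m-n+1})$ bound. When $m' = 0$ (i.e.\ $m = 1/2$) the integral is logarithmically divergent at $p_1=0$; moreover, the Taylor expansion in $\hbar\mu/p_1$ is valid only for $p_1 \gtrsim \hbar$, so cutting the $p_1$-integration at that scale produces the factor $\log(1/\hbar)$. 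The complementary region $0 < p_1 < \hbar$ is estimated directly from $|a|\leq C\hbar^{-\ell'}(1+|\sigma|)^{m'}$, and contributes a quantity smaller by a full power of $\hbar$, hence absorbable. The main technical step is making this breakpoint at $p_1\sim\hbar$ rigorous, showing that the logarithm arises only from the blow-up of the subprincipal data at $\Sigma$, and checking that the contributions of $A_0$ and $A_1$ in the global decomposition fit within the advertised remainder order.
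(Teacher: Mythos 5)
Your proposal is correct and follows essentially the same route as the paper: reduce to the model oscillatory integral, restrict to the diagonal, integrate out the $(s,\sigma)$-oscillation to identify the leading term $(2\pi)^{-n}\hbar^{-\ell-m-n}\int_{X_c}\sigma_0$, and obtain the $m>1/2$ versus $m=1/2$ dichotomy from the $p_1^{m'-1}$-type behavior of the lower-order symbol terms near $\Sigma$. The only differences are organizational (the paper rescales $\mu=\hbar\sigma$, inserts a cutoff $\rho(\hbar\sigma)$, and uses explicit $L^1$-Fourier bounds where you integrate in $s$ first and split the $p_1$-integration at scale $\hbar$), which amount to the same computation.
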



The rest of this subsection is devoted to a proof of this result, which we break into a series of lemmas.
Note that it is enough to estimate the integral along the diagonal of the kernel of $A$ in the model case with $\ell'=0$. Consider therefore a semiclassical kernel of the form:
\begin{equation}\label{resfrio}
u_{\hbar}(x,y)=\frac{1}{(2\pi \hbar)^n} \int e^{i\hbar^{-1} [ (x_{1}-y_{1}-s)p_1+(x'-y') p' ] +is\sigma } a(s,x,y,p,\sigma)ds dp d\sigma,
\end{equation}
where $p=(p_1, p')$, $x=(x_1,x')$, $y=(y_1,y')$ and $a$ is a classical symbol in $\sigma$ of degree $m'=m-1/2$, compactly supported in $(s,x,y,p)$.

\begin{lemma}\label{LemmaRho}
Let $\mu_0 >0$ be large enough so that $a|_{\{|p_1|> \mu_{0}/2\}} = 0$, and let $\rho \in C_{0}^{\infty}$ be a smooth function with compact support which is equal to one in $[-\mu_{0}/2, \mu_{0}/2 ]$ and is supported in $[ -\mu_{0} , \mu_{0} ]$.  Then
\begin{equation}\label{Rho}
u_{\h}(x,x)=\frac{1}{(2\pi \h)^n} \int e^{-isp_{1}/\h +is\sigma}a(s,x,x,p,\sigma )\rho(\h \sigma)ds dp d\sigma + O(\h^\infty),
\end{equation}
uniformly in $x$.
\end{lemma}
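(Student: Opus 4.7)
The plan is to write the difference between $u_\h(x,x)$ and the right-hand side of \eqref{Rho} as an oscillatory integral with amplitude $a(s,x,x,p,\sigma)(1-\rho(\h\sigma))$ and show it is $O(\h^\infty)$ uniformly in $x$ by repeated integration by parts in $s$, i.e.\ by a non-stationary phase argument. After setting $y=x$ the phase is $\phi(s) = s(\sigma - p_1/\h)$, and the key estimate will be a lower bound on $|\partial_s\phi| = |\sigma - p_1/\h|$ on the intersection of the supports of $a$ and $1-\rho(\h\sigma)$.

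First I would perform a careful support analysis. By taking $\mu_0$ slightly larger than the minimal admissible value (so that $\mu_0 > 2\sup_{\operatorname{supp}(a)}|p_1|$), one may assume $a$ vanishes on $\{|p_1| \ge \mu_0/2 - \eta\}$ for some fixed $\eta > 0$. Since $1-\rho(\h\sigma)$ vanishes on $\{|\sigma| \le \mu_0/(2\h)\}$, on the intersection of the two supports one has
\[
|\sigma - p_1/\h| \ge |\sigma| - |p_1|/\h \ge \mu_0/(2\h) - (\mu_0/2 - \eta)/\h = \eta/\h.
\]
Next I would integrate by parts in $s$ via $e^{is(\sigma - p_1/\h)} = (i(\sigma - p_1/\h))^{-1}\partial_s e^{is(\sigma - p_1/\h)}$. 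Compact $s$-support of $a$ eliminates boundary terms, and $\rho(\h\sigma)$ is independent of $s$, so after $N$ iterations the amplitude becomes $(i(\sigma - p_1/\h))^{-N}(-\partial_s)^N a$, still multiplied by $1-\rho(\h\sigma)$.

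Finally I would bound the resulting integral. The symbol estimate $|\partial_s^N a| \le C_N(1+|\sigma|)^{m'}$ holds uniformly in $(s,x,p)$ by the compactness of $\operatorname{supp}(a)$. I split the $\sigma$-integration into a near region $\mu_0/(2\h) \le |\sigma| \le 2\mu_0/\h$ and a far region $|\sigma| \ge 2\mu_0/\h$. In the near region I use $|\sigma - p_1/\h| \ge \eta/\h$ directly; the $\sigma$-measure is $O(1/\h)$, giving after dividing by $(2\pi\h)^n$ a contribution of order $\h^{N-n-m'-1}$. In the far region I use the sharper bound $|\sigma - p_1/\h| \ge |\sigma|/2$, so that $\int |\sigma|^{m'-N}\,d\sigma$ converges for $N > m'+1$ and contributes $O(\h^{N-m'-1})$, again yielding $O(\h^{N-n-m'-1})$ after the $(2\pi\h)^n$ prefactor. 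Taking $N\to\infty$ gives $O(\h^\infty)$ uniformly in $x$.

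The only genuine obstacle is arranging the strict separation $|p_1| \le \mu_0/2 - \eta$ on $\operatorname{supp}(a)$: without a positive gap, the lower bound on $|\sigma - p_1/\h|$ collapses and the integration by parts breaks down. This is a cosmetic issue resolved by the mild enlargement of $\mu_0$ described above; the remainder of the argument is a routine semiclassical non-stationary phase estimate, combined with the split into near and far $\sigma$-regions to ensure convergence of the $\sigma$-integral.
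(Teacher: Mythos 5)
Your argument takes a genuinely different route from the paper's. You integrate by parts in $s$ only, using the cutoff $1-\rho(\h\sigma)$ to obtain a lower bound on the phase derivative $|\sigma-p_1/\h|$, and then split the $\sigma$-integration into a near region and a far region to keep the $\sigma$-integral convergent; this is a clean, elementary non-stationary-phase estimate, and the bookkeeping (a contribution $O(\h^{N-n-m'-1})$ from each region, $N$ arbitrary, uniformity in $x$ from the compact supports) is correct. The paper instead starts from the regularized definition of the kernel (the $D=1+D_s^2$ device), rescales $\mu=\h\sigma$, $\omega=-p_1+\mu$, passes to the Fourier transform in $(s,\omega)$, and Taylor-expands $e^{-i\h\xi_1\xi_2}$: every term of the resulting expansion is evaluated at the critical point $s=0$, $p_1=\mu$, where the product $a\cdot(1-\rho(\mu))$ vanishes identically (if $|\mu|>\mu_0/2$ then $a=0$ at $p_1=\mu$, otherwise $1-\rho(\mu)=0$), so only an explicitly estimated remainder survives. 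Both mechanisms exploit the same fact --- the stationary set $\sigma=p_1/\h$ misses the support of the amplitude once the cutoff is inserted --- but yours requires a quantitative separation, while the paper's needs only the set-theoretic disjointness.

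That is where your one real weak point lies: the ``mild enlargement of $\mu_0$'' is not a legitimate reduction for the lemma as stated. The hypothesis only requires $a=0$ on $\{|p_1|>\mu_0/2\}$, so the borderline case $\mu_0/2=\sup_{\mathrm{supp}\,a}|p_1|$ is admissible, and there your lower bound degenerates to $|\sigma-p_1/\h|\ge 0$. Enlarging $\mu_0$ changes $\rho$, hence changes the right-hand side of \eqref{Rho}; and the reduction back to the original cutoff is not automatic, because the difference of the two cutoffs is supported all the way down to $|\h\sigma|=\mu_0/2$, which reproduces exactly the same degeneracy. Two honest fixes: (i) note that, $a$ being smooth and vanishing on the open set $\{|p_1|>\mu_0/2\}$, it vanishes to infinite order at $|p_1|=\mu_0/2$, uniformly with the weight $(1+|\sigma|)^{m'}$ (Taylor with integral remainder in $p_1$, using the symbol estimates on $\partial_{p_1}^k a$); then handle the near-stationary region $|\sigma-p_1/\h|\le \h^{-1/2}$ --- where necessarily $\mu_0/2-\h^{1/2}\le|p_1|\le\mu_0/2$ --- by this rapid vanishing, and run your integration by parts on the complement, where $|\sigma-p_1/\h|\ge\h^{-1/2}$; or (ii) use the paper's cancellation at $p_1=\h\sigma$, which covers the borderline case with no extra work. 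For the way the lemma is used later in the paper, where the authors are free to choose $\mu_0$ and $\rho$, your version with a strict gap would suffice; but as a proof of the lemma as stated it leaves the borderline case open.
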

\begin{proof}
The rigorous definition of $u_h$ when the integral in $\sigma$ diverges is given in equation \eqref{sense}:  If $K>>0$ ($K \ge m'/2+1$), then $u_h$ is equal to the absolutely convergent integral
\[
\frac{1}{(2\pi \h)^n}\int e^{i\h^{-1} \left[ (x_1-y_1-s)p_1+(x'-y')p' \right]+is\sigma} \frac{1}{\left( 1+\sigma^2 \right)^K}D^{K}\left[ e^{-is\h^{-1}p_1} a(s,x,y,p,\sigma) \right]ds dp d\sigma,
\]
and therefore
\[
u_{\h}(x,y)=\frac{1}{(2\pi \h)^n}
\sum_{j=0}^{K} \int e^{i\h^{-1} \left[ (x-y)p-sp_1 \right]+is\sigma}\left( -i\hbar^{-1}p_1 \right)^{2K-j}\frac{a_{j}(s,x,y,p,\sigma)}{\left( 1+\sigma^2\right)^K}ds dp d\sigma ,
\]
where the last expression is obtained by expanding the action of $D^K$.  Note that, $\forall j$, $a_j$ consists of linear combinations of derivatives of $a$ with respect to $s$ (and therefore  $a_j \in \mathcal{A}^{0,m'}$). Using this we get that the remainder in equation (\ref{Rho}) is equal to
\[
\sum_{j=0}^{K}\frac{1}{(2\pi \h)^n}\int e^{-isp_1/\h +is\sigma}\frac{a_{j}(s,x,x,p,\sigma)}{\left( 1+\sigma^2 \right)^{K}}\left( -i\h^{-1}p_1 \right)^{2K-j} \left( 1-\rho(\h \sigma) \right)ds dp d\sigma .
\]
Let $b_{j}(s,x,p,\sigma~;~\h)=\frac{a_{j}(s,x,x,p,\sigma)}{\left( 1+\sigma^2 \right)^K}(-i \h^{-1} p_1)^{2K-j} \left(1-\rho(\h \sigma) \right)$. We will show that for each $j$, 
\[
B_j:= \frac{1}{(2\pi \h)^n}\int e^{-isp_{1} \h^{-1}+is \sigma}b_{j}\, ds dp d\sigma  
\]
is ${O}(\h^{\infty})$

Starting with the change of variables $\mu =\h \sigma$, $\omega =-p_1+\mu$, we obtain that
\[
B_j  = \frac{\h^{-1}}{(2\pi \h)^n} \int e^{is \omega/\h}\, b_{j}(s,x,(-\omega+\mu,p'),\h^{-1} \mu~;\h)\, ds d\omega dp' d\mu
\]
\[
=\frac{1}{(2\pi \h)^n}\int e^{-i\h \xi_1 \xi_2}\,c_{j}(x,\mu,p'~;~\h~;~\xi)\,d\xi dp' d\mu,
\]
where $\xi=(\xi_1,\xi_2)$ are the dual variables to $(s,\omega)$, and
\[
c_{j}(x,\mu,p'~;~\h~;~\xi)=\frac{1}{2\pi}\int e^{-i (s,\omega)\cdot \xi}\,b_{j}(s,x,(-\omega+\mu,p'),\h^{-1}\mu~;~\h )ds d\omega
\]
is the Fourier transform of $b_j$ in the $(s,\omega)$ variables. Using the  inequality 
\begin{equation}\label{also-well-known}
\left| e^{it}-\sum_{k=0}^{N-1}\frac{(i t)^k}{k!} \right| \le \frac{|t|^N}{N!}, 
\end{equation}
we obtain that for each $N>0$,
\[
\int e^{-i\h \xi_1 \xi_2} c_{j}(x,\mu,p'~;~\h~;~\xi)d\xi-\sum_{k=0}^{N-1}\int \frac{\left( -i\h \xi_1 \xi_2 \right)^k}{k!} c_{j}(x,\mu,p'~;~\h~;~\xi)d\xi
\]
\[
=\int R_{N}(\xi ~;~\h)c_{j}d\xi,
\]
where $|R_{N}(\xi~;~\h)| \le \frac{\h^{N}|\xi_1 \xi_2|^N}{N!}$. Moreover, for each $k=0,\ldots,N-1$,
\[
\frac{(i\h)^k}{k!}\int (-\xi_1 \xi_2)^{k} c_{j} d\xi
=\frac{2\pi (-i\h)^k}{k!} \left( \frac{\partial^2}{\partial s \partial p_1} \right)^{k} b_{j}(s,x,p,\h^{-1} \mu~;~\h)_{\big|_{s=0,p_1=\mu}}
\]
\[
= \frac{2\pi (-i\h)^k\,\left( 1-\rho(\mu) \right)}{(1+(\h^{-1} \mu)^2)k!} \left( \frac{\partial^2}{\partial s \partial p_1} \right)^{k}a_{j}(s,x,(\mu,p'),\h^{-1} \mu)_{\big|_{s=0,p_1={\mu}}} =0, 
\]
since $\rho (\mu)$ is equal to one in the support of $a$. It follows that
\[
\left| \int e^{-i\h \xi_1 \xi_2}c_{j}d\xi \right| \le \frac{\h^N}{N!}\int \left| \left( \xi_1 \xi_2 \right)^N c_{j} \right|d\xi
\]
\[
= \frac{\h^N}{2\pi N!} \left| \left| \int e^{-i(s,\omega)\cdot \xi} \left( \frac{\partial^2}{\partial s \partial \omega} \right)^{N} \left( b_{j}(s,x,(-\omega+\mu,p'),\h^{-1}\mu~;~\h )\right) ds d\omega \right| \right|_{L_{\xi}^1},
\]
where $\norm{\cdot}_{L^1_\xi}$ denotes the $L^1$-norm of a function of the $\xi$ variables. The well-known inequality 
\begin{equation}\label{well-known-ineq}
\left|| \hat{v} \right||_{L^{1}(\mathbb{R}^d)} \le \sum_{|\alpha|\le d+1}\left||\partial^{\alpha}v\right||_{L^{1}(\mathbb{R}^{d})}
\end{equation}
implies that the above bound is in turn bounded by
\[
\frac{\h^N}{2\pi N!} \sum_{|\alpha| \le 3} \left| \left| \partial^{\alpha} \left( \frac{\partial^2}{\partial s \partial \omega} \right)^N\left[ b_{j}(s,x,(-\omega+\mu,p'),\h^{-1} \mu~;~\h) \right] \right| \right|_{L_{s,\omega}^{1}}
\]
\[
\begin{aligned}
=\frac{\h^N}{2\pi N!} \sum_{|\alpha| \le 3} \Big| \Big| & \frac{(1-\rho(\mu)) (-i\h^{-1})^{2K-j}}{ \left( 1+(\h^{-1} \mu)^2 \right)^K} \partial^{\alpha} \left( \frac{\partial^2}{ \partial s\partial \omega}\right)^{N} \\
&\left[ a_{j}(s,x,x,(-\omega+\mu,p'),\h^{-1} \mu ) (-\omega+\mu)^{2K-j} \right] \Big| \Big|_{L_{s,\omega}^1},
\end{aligned}
\]
where $\norm{\cdot}_{L_{s,\omega}^1}$ is defined similarly, and $\partial^{\alpha}=\frac{\partial^{\alpha_1}}{\partial s^{\alpha_1}}~\frac{\partial^{\alpha_2}}{\partial \omega^{\alpha_2}}$, $\alpha=(\alpha_1, \alpha_2)$. Therefore $B_j$ is bounded above by
\[
\begin{aligned}
\frac{\h^{-n+N-2 K+j}}{(2\pi)^{n+1}N!} \sum_{|\alpha|\le 3} \int \Big| \partial^{\alpha} \left( \frac{\partial^2}{\partial s \partial \omega} \right)^{N} & \Big[ \frac{(-\omega+\mu)^{2K-j} a_{j}(s,x,(-\omega+\mu,p'),\h^{-1}\mu)}{\left( 1+(\h^{-1} \mu)^2\right)^k}  \\
&(1-\rho(\mu))\Big] \Big| ds d\omega dp' d\mu
\end{aligned}
\]
\[
= \frac{\h^{-n+N+1-2K+j}}{(2\pi)^{n+1}N!} \sum_{|\alpha|\le 3} \int \left| \partial^{\alpha} \left( \frac{\partial^2}{\partial s \partial p_1} \right)^{N}\left[ \frac{p_1^{2K-j} a_{j}(s,x,p,\sigma )}{\left( 1+\sigma^2\right)^k} (1-\rho(\h \sigma))\right] \right| ds dp d\sigma.
\]
Finally, notice that the integrand is $O\left( \sigma^{m'-2K} \right)$ and that $1-\rho(\h \sigma)$ has support in $|\sigma | \ge \h^{-1} \mu_0/2$. Therefore the above upper bound is less than a constant times
\begin{alignat*}{1}
\h^{-n+N+1-2K+j}\int_{\h^{-1}\mu_0/2}^{\infty}\sigma^{m'-2K}d\sigma=& \h^{-n+N+1-2K+j}\frac{\sigma^{m'-2K+1}}{m'-2K+1}\left|_{\h^{-1}\mu_0/2}^{\infty} \right.
\\ &=O\left( \h^{j-n-m'+N} \right) .
\end{alignat*}
Since this is true for all positive integers $N$ we are done.
\end{proof}

\begin{lemma}
\label{lemmaTwo}
If $m'\geq 0$,
\begin{equation}\label{lemaDos}
\int u_\h(x,x)\, dx = \frac{1}{(2\pi \h)^n}\int 2\pi ~a(0,x,x,\mu, p', \h^{-1}\mu)\, dx\,dp'\,d\mu + O(\h^{-n-m'+1}).
\end{equation}
\end{lemma}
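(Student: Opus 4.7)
The plan is to reduce to a standard semiclassical stationary phase computation in a pair of ``dual'' variables, taking advantage of the cutoff $\rho$ already introduced in Lemma \ref{LemmaRho}.

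First I would perform two changes of variables in the integral $\int u_\h(x,x)\,dx$ coming from Lemma \ref{LemmaRho}: the rescaling $\mu=\h\sigma$ (which confines $\mu$ to the compact interval where $\rho\neq 0$ and contributes a Jacobian $\h^{-1}$), followed by $\omega=\mu-p_1$ (which is bounded thanks to the compact $p_1$-support of $a$). The phase $-isp_1/\h+is\sigma$ becomes exactly $is\omega/\h$, and integration in $x$ produces
\[
\int u_\h(x,x)\,dx=\frac{\h^{-1}}{(2\pi\h)^n}\int e^{is\omega/\h}\,a\bigl(s,x,x,(\mu-\omega,p'),\mu/\h\bigr)\,\rho(\mu)\,ds\,d\omega\,dx\,dp'\,d\mu+O(\h^\infty).
\]

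Next I would apply semiclassical stationary phase in the variables $(s,\omega)$. The phase $s\omega$ has a unique nondegenerate critical point at the origin, with signature zero and $|\det|=1$, so the standard expansion
\[
\int e^{is\omega/\h}f(s,\omega)\,ds\,d\omega\sim 2\pi\h\sum_{k\ge 0}\frac{(-i\h)^k}{k!}(\partial_s\partial_\omega)^k f(0,0)
\]
applies (the amplitude is compactly supported in both variables by construction). The $k=0$ term yields $2\pi\h\,a\bigl(0,x,x,(\mu,p'),\mu/\h\bigr)\rho(\mu)$, and since at $\omega=0$ one has $p_1=\mu$ with $|p_1|\le\mu_0/2$ on the support of $a$, the cutoff satisfies $\rho(\mu)\equiv 1$ there. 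Multiplying by $\h^{-1}(2\pi\h)^{-n}$ recovers the claimed leading term.

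The last and most delicate step is the error estimate, which is where the hypothesis $m'\ge 0$ enters. The amplitude is not $\h$-independent: it is evaluated at $\sigma=\mu/\h$, and since $a$ is a classical symbol of order $m'$ in $\sigma$, its derivatives satisfy
\[
\bigl|(\partial_s\partial_\omega)^k a(s,x,x,(\mu-\omega,p'),\mu/\h)\bigr|\le C_k(1+|\mu|/\h)^{m'}\le C_k\,\h^{-m'}
\]
uniformly for $|\mu|\le\mu_0$, the last inequality requiring precisely $m'\ge 0$. The first correction in the stationary-phase expansion is therefore at most $\h^{2-m'}$ pointwise, and after multiplying by the prefactor $\h^{-1-n}$ and integrating over the compact set in $(x,\mu,p')$ one obtains a total error of size $\h^{-n-m'+1}$, as claimed. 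The main obstacle throughout is this interaction of two scales --- the semiclassical oscillation $\h^{-1}$ in the phase versus the symbol growth $\h^{-m'}$ in the amplitude --- and $m'\ge 0$ is exactly the balance that guarantees the stationary phase expansion produces a genuine asymptotic expansion with the advertised remainder.
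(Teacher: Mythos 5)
Your argument is correct and is essentially the paper's own proof: the same reduction via Lemma \ref{LemmaRho}, the same change of variables $\mu=\h\sigma$, $\omega=\mu-p_1$ turning the phase into $s\omega/\h$, and stationary phase in $(s,\omega)$, with $m'\ge 0$ entering only through the $O(\h^{-m'})$ size of the amplitude and its derivatives at $\sigma=\h^{-1}\mu$. The only differences are cosmetic: the paper carries out the remainder estimate by hand (Fourier transforming in $(s,\omega)$, Taylor-expanding $e^{-i\h\xi_1\xi_2}$, and using $L^1$ bounds), thereby retaining the finer bound of the form $C(\h+|\mu|)^{m'}\h^{-m'+1}$ that it reuses later for the $m'\le-4$ trace lemma, whereas your cruder uniform bound $(1+|\mu|/\h)^{m'}\le C\h^{-m'}$ is perfectly adequate for the present statement; also, the coefficients in your expansion should read $(i\h)^k(\partial_s\partial_\omega)^k$ (equivalently $(-i\h)^k(D_sD_\omega)^k$), an immaterial sign slip.
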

\begin{proof}
By (\ref{Rho}), it suffices to estimate
\[
\tilde{u}_\h(x,x) =
\frac{1}{(2\pi \h)^n} \int e^{-isp_{1}/\h +is\sigma}a(s,x,x,p,\sigma )\rho(\h \sigma)ds dp d\sigma.
\]
By an argument identical to the one used in the proof of the Lemma,
\[
\tilde{u}_\h(x,x)= \frac{1}{(2\pi \hbar)^n} ~\hbar^{-1} \int e^{i s \omega /\hbar} a(s,x,x,(-\omega+\mu,p'),\hbar^{-1} \mu) \rho(\mu) ds dp' d\omega d\mu.
\]
We will apply the method of stationary phase to the $(s,\omega)$ integral, before integrating with respect to $p'$ and $\mu$.   To this end introduce the notation
\[
u_\h(x,x,p',\mu) = \frac{1}{(2\pi \hbar)^n} ~\hbar^{-1} \int e^{i s \omega /\hbar} a(s,x,x,(-\omega+\mu,p'),\hbar^{-1} \mu) \rho(\mu) ds d \omega,
\]
so that the left-hand side of (\ref{lemaDos}) is equal to $\int u_\h(x,x,p',\mu)\, dx dp' d\mu$ modulo $O(\h^\infty)$.  We also have that
\[
u_\h(x,x,p',\mu) =\frac{1}{(2\pi \h)^n} \int e^{-i \hbar \xi_1 \xi_2} c(x,\mu,p';~\h~;~ \xi) d\xi,
\]
where $\xi=\left( \xi_{1},\xi_{2} \right)$ are the dual variables to $(s,\omega)$, and 
\[
c ( x,\mu,p';~ \h~;~ \xi)=\frac{1}{2\pi} \int e^{-(s,\omega) \cdot \xi}a(s,x,x,(-\omega+\mu,p'),\hbar^{-1} \mu) \rho(\mu) ds d \omega
\]
is the Fourier transform of $\rho a$ in $(s,\omega)$.   Note that
\[
\int c(x,\mu , p';~ \h;~ \xi ) d \xi = 2\pi~ a(0,x,x,(\mu,p'),\h^{-1}\mu),
\]
and therefore (by (\ref{also-well-known}))
\begin{equation}\label{faltaba_numero}
u_\h(x,x,p',\mu) - \frac{1}{(2\pi \h)^n}~2\pi~ a(0,x,x,(\mu,p'),\h^{-1}\mu)= \frac{1}{(2\pi \h)^n} \int R( \xi, \hbar ) c( \xi )d \xi,
\end{equation}
where $\left| R(\xi,\hbar) \right| \le \hbar \left| \xi_{1} \xi_{2} \right|$.  Integrating (\ref{faltaba_numero}) we see that the error term in (\ref{lemaDos}) is bounded by
\begin{equation}\label{remedios}
\frac{1}{(2\pi \h)^n} \int_K \norm{Rc}_{L^1_\xi}(x,p',\mu)\, dx\, dp'\, d\mu ,
\end{equation}
where $K$ is a compact set containing the support of the left-hand side of (\ref{faltaba_numero}). Using (\ref{well-known-ineq}) again,
\[
\hbar \left| \left| \xi_{1} \xi_{2} c \right| \right|_{L^{1}_\xi} = \frac{\hbar}{2\pi} \left| \left| \int e^{-i (s,\omega) \cdot \xi} \frac{\partial^2}{\partial s \partial \omega} a(s,x,x,(-\omega+\mu,p'),\hbar^{-1} \mu) \rho(\mu ) ds d \omega \right| \right|_{L^{1}}
\]
\[
\le \frac{\hbar}{2\pi} \sum_{|\alpha| \le 3} \left| \left| \partial^{\alpha} \frac{\partial^{2}}{\partial s \partial \omega} a(s,x,x,(-\omega+\mu,p'),\hbar^{-1} \mu) \rho(\mu)  \right| \right|_{L^{1}_{(s,\omega)}}.
\]
Since $a(s,x,y,p,\sigma)$ is a classical symbol in $\sigma$ of order $m'$ and compactly supported in the rest of the variables, $\forall \alpha$ there exists a constant $C=C(\alpha)$  such that
\begin{equation}
\label{ineq_mu}
\begin{aligned}
\left| \frac{\hbar}{2\pi} \partial^{\alpha} \frac{\partial^2}{\partial s \partial \omega}a(s,x,x,(-\omega+\mu,p'),\hbar^{-1} \mu) \rho(\mu) \right| \le C(\alpha)\left( 1+\hbar^{-1}|\mu| \right)^{m'} \hbar \\
=C(\alpha) \left(\hbar +|\mu| \right)^{m'} \hbar^{-m'+1}
\end{aligned}
\end{equation}
for all $(s,x,y,p)$.  Integrating (\ref{ineq_mu}) with respect to $(s, \omega)$ over a sufficiently large compact set we obtain that
\[
\norm{Rc}_{L^1_\xi}(x,p',\mu)\leq C \left(\hbar +|\mu| \right)^{m'} \hbar^{-m'+1}.
\]
We now integrate this inequality over the compact set, $K$, in (\ref{remedios}), to obtain that the error term in (\ref{lemaDos}) is bounded above by a constant times
\begin{equation}\label{error_bound}
\hbar^{-m'-n+1}\int_{|\mu|\leq \mu_1}\left(\hbar +|\mu| \right)^{m'} d\mu
\end{equation}
for some $\mu_1>0$ independent of $\h$, and this is $O (\hbar^{-m'-n+1})$ when $m' \ge 0$.
\end{proof}

\begin{remark}\label{NotaError}
Lemma \ref{lemmaTwo} implies the Theorem (with a better error estimate in case $m'=0$) if the amplitude $a$ is homogeneous in the variable $\sigma$.
\end{remark}

\medskip
\noindent
{\em Proof of Theorem \ref{Trace1}.}
Let us first assume that $m' > 0$. Since $a$ is a symbol in $\sigma$ of degree $m' > 0$, there exists $\tilde{a}(x,p)$ and a constant $C$ such that
\[
\left| a(0,x,x,p,\sigma)-\sigma^{m'} \tilde{a}(x,p) \chi(\sigma)\right| \le C~(1+|\sigma|)^{m'-1},
\]
where $\chi (\sigma)$ is smooth in $\sigma \neq 0$, and homogeneous of degree zero in $\sigma$. Then, in particular,
\[
\left| \frac{2\pi}{(2\pi \h)^n}~a(0,x,x,\mu,p',\hbar^{-1}\mu)-\frac{2\pi}{(2\pi \h)^n} ~\hbar^{-m'} \mu^{m'}\chi(\mu) \tilde{a}(0,x,(\mu,p')) \right| 
\]
\[
\le 
(2\pi)^{1-n} C~(1+\hbar^{-1}|\mu|)^{m'-1} \h^{-n}.
\]
The left-hand side of this inequality is supported in $\mu \in [-\mu_0, \mu_0]$. After integrating with respect to $\mu$, the remainder is bounded by constant times
\[
\h^{-n} \int_{-\mu_0}^{\mu_0}(1+\hbar^{-1}|\mu|)^{m'-1}d\mu=2\hbar^{-n-m'+1}\int_{0}^{\mu_0}(\hbar+\mu)^{m'-1}d\mu 
\]
\[ 
= 2\hbar^{-n-m'+1} \left( (\mu_0+\hbar)^{m'}-\h^{m'} \right)/m'=O(\hbar^{-n-m'+1}) \text{ since }m' > 0.
\]
Therefore, for any $\ell'$ and $m' > 0$
\[
\int u_\h(x,x)\,dx = (2\pi)^{-n} \hbar^{-\ell-m-n} \int 2\pi~p_{1}^{m'} \tilde{a}(0,x,p) dx dp +O(\hbar^{-\ell-m-n+1}),
\]
where $2\pi ~ p_{1}^{m'} \tilde{a}(0,x,p)$ is the principal symbol on the diagonal.

Now let's assume $m'=0$. Since $a$ is a symbol in $\sigma$ of degree zero, there exists $\tilde{a}(x,p)$ such that,
\[
\left| a(0,x,x,p,\sigma)-\tilde{a}(x,p)\chi(\sigma) \right| < \frac{C}{1+\left| \sigma \right|}
\]
for some constant $C >0$.Then
\[
\left| \frac{2\pi}{(2\pi \h)^n} a(0,x,x,\mu,p',\hbar^{-1}\mu)-\frac{2\pi}{(2\pi \h)^n} \tilde{a}(x,(\mu,p'))\chi(\mu) \right| <  \frac{(2\pi)^{1-n} C}{1+\hbar^{-1}\left| \mu \right|}~\h^{-n}
\]

Again, since the left hand side is supported in the set $\left\{ | \mu | \le \mu_0 \right\}$, after integrating with respect to $\mu$, the remainder is bounded by a constant times
\[
\h^{-n} \int_{0}^{\mu_0}\frac{1}{1+\hbar^{-1}\mu}d\mu=\hbar^{-n+1} \int_{0}^{\mu_0}\frac{1}{\mu+\hbar} d\mu
\]
\[
= \hbar^{-n+1} \left( log(\mu_0+\hbar)-log(\hbar) \right)
=O(\hbar^{-n+1} ~ log(1/\hbar)).
\]
Therefore, for $m=1/2$
\[
\int u_\h(x,x)\,dx =  (2\pi)^{-n} \hbar^{-\ell-m-n} \int 2\pi~ \tilde{a}(0,x,p) dpdx +O (\hbar^{-\ell-m-n+1} ~\log(1/\hbar)),
\]
where $ 2\pi~ \tilde{a}(0,x,p)$ is the principal symbol of the family on the diagonal.

\subsection{The trace in case $m' \le -4$.}

\begin{theorem} With the previous notation, if $m\le -7/2$, and $\widehat A$ an operator in ${\mathcal A}_{X_c}$, 
then each classical $\Psi$DO $\sigma_1(A)_s$ obtained from the symbol of $\widehat A$ in the flow out on each orbit is of trace class, and
\begin{equation}\label{} 
\tr(\widehat A) = (2\pi)^{-n+1/2} \h^{-n-\ell+1/2} \int_{ S}\tr \left( \sigma_1(A)_s\right) ds +O(\hbar^{-n-\ell+3/2}).
\end{equation}
\end{theorem}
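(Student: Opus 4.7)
The argument parallels Theorem \ref{Trace1}: by a microlocal partition of unity and the usual reductions it suffices to estimate $\int u_{\hbar}(x,x)\,dx$ in the model case \eqref{resfrio} with $\ell'=0$ and with amplitude $a$ a classical symbol in $\sigma$ of order $m'=m-1/2\le -4$, compactly supported in the remaining variables. Contributions from the decomposition pieces $A_0\in\text{sc-}I^{\ell+m}(\Delta\setminus\Sigma)$ and $A_1\in\text{sc-}I^{\ell}(\F)$ away from $\Sigma$, as well as from the smooth pseudodifferential part, are strictly lower order in $\hbar$ (by the standard trace asymptotics for semiclassical FIOs), so they are absorbed into the remainder $O(\hbar^{-n-\ell+3/2})$. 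Trace-class of $\sigma_1(A)_s$ is immediate: its Schwartz kernel is the Fourier transform in $\sigma$ of a compactly supported symbol of order $m'\le -4$, hence a $C^{|m'|-2}$ kernel on the compact one-dimensional fiber $F_s$, so the operator is trace class (and continuous in $s$).

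Lemma \ref{LemmaRho} applies verbatim for any $m'$, so modulo $O(\hbar^\infty)$ we may insert the cutoff $\rho(\hbar\sigma)$. Running the stationary phase argument of Lemma \ref{lemmaTwo} then yields
\[
\int u_\h(x,x)\,dx = \frac{2\pi}{(2\pi\h)^n}\int a\bigl(0,x,x,(\mu,p'),\h^{-1}\mu\bigr)\,dx\,dp'\,d\mu + R(\h),
\]
with $R(\h) = O\bigl(\h^{-m'-n+1}\int_{|\mu|\le\mu_1}(\h+|\mu|)^{m'}\,d\mu\bigr)$. For $m'<-1$ the inner integral equals $\tfrac{1}{m'+1}\bigl[(\h+\mu_1)^{m'+1}-\h^{m'+1}\bigr]\sim -\tfrac{1}{m'+1}\h^{m'+1}$, so $R(\h)=O(\h^{-n+2})=O(\h^{-n-\ell+3/2})$, matching the claimed error.

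For the main term I would change variables $\mu=\h\sigma$, giving
\[
\frac{2\pi\,\h^{-n+1}}{(2\pi)^n}\int a\bigl(0,x,x,(\h\sigma,p'),\sigma\bigr)\,dx\,dp'\,d\sigma.
\]
Since $\sigma\cdot a$ is a classical symbol of order $m'+1\le -3$ (integrable in $\sigma$), a first order Taylor expansion of $a$ in the first momentum slot gives
\[
\int a\bigl(0,x,x,(\h\sigma,p'),\sigma\bigr)\,dx\,dp'\,d\sigma = \int a\bigl(0,x,x,(0,p'),\sigma\bigr)\,dx\,dp'\,d\sigma + O(\h),
\]
with the error again absorbed in $O(\h^{-n+2})$.

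It remains to identify the leading integral with $\int_S \tr(\sigma_1(A)_s)\,ds$. From formula \eqref{symbolsModelCase}, for the model parameters $s=(x',p')\in S$ the operator $\sigma_1(A)_s$ has Schwartz kernel
\[
K_s(x_1,y_1) = \sqrt{2\pi}\int a\bigl(x_1-y_1,(x_1,x'),(y_1,x'),(0,p'),\sigma\bigr)e^{i(x_1-y_1)\sigma}\,d\sigma,
\]
so setting $x_1=y_1$ and integrating along the fiber gives $\tr(\sigma_1(A)_s)=\sqrt{2\pi}\int a(0,x,x,(0,p'),\sigma)\,dx_1\,d\sigma$. Integrating over $S$ yields $\int a(0,x,x,(0,p'),\sigma)\,dx\,dp'\,d\sigma = (2\pi)^{-1/2}\int_S \tr(\sigma_1(A)_s)\,ds$, and inserting this above produces the coefficient $(2\pi)^{-n+1/2}\h^{-n+1}=(2\pi)^{-n+1/2}\h^{-n-\ell+1/2}$ as required. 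The main delicate point is the coordinate-free passage between the identification \eqref{eq:SymbolDownstairs} and the fiberwise Schwartz kernel theorem of Proposition \ref{prop:AlgerbaSheaf}, which ensures that the half-density factors in $\sigma_1$ correctly assemble into the densities $dx_1$ on each fiber and $ds$ on $S$; once this is set up in the model case the globalization is automatic since both sides of the formula are intrinsic.
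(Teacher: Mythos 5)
Your model-case computation is essentially sound, and in fact it is a mild variant of the paper's own argument: the paper first integrates out $\sigma$, obtaining $a_2(s,x,y,p)=\int e^{is\sigma}a\,d\sigma$, which for $m'\le -4$ is $C^2$ in $s$, and then applies stationary phase with $C^{2k}$ amplitudes in $(s,p_1)$ to get $2\pi\hbar\,a_2(0,x,x,0,p')$ directly; your route instead reuses the error bound \eqref{error_bound} (exactly the paper's lemma for $m'<0$, $\ell'=0$) and then passes to the limit value at $p_1=0$ by the substitution $\mu=\hbar\sigma$ and a Taylor expansion in the $p_1$ slot, which works since $\sigma\,\partial_{p_1}a=O((1+|\sigma|)^{m'+1})$ is integrable. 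The identification of $\int a(0,x,x,(0,p'),\sigma)\,dx\,dp'\,d\sigma$ with $(2\pi)^{-1/2}\int_S\tr(\sigma_1(A)_s)\,ds$ and the trace-class statement are also fine.

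However, there is a genuine gap in your opening reduction. You claim that the piece $A_1\in \text{sc-}I^{\ell}(M\times M;\F)$ of the decomposition in Definition \ref{def:JGenCase} contributes only at lower order and can be absorbed into $O(\hbar^{-n-\ell+3/2})$. That is false: $\F$ meets the diagonal along $\Sigma$, and restricting the kernel of such an FIO to the diagonal and integrating, stationary phase in $(s,p_1)$ at $\Sigma$ produces a contribution of order exactly $\hbar^{-n-\ell+1/2}$, i.e.\ the \emph{same} order as the leading term of the theorem; its coefficient is precisely $(2\pi)^{-n+1/2}\int_S\tr(\sigma_1(A_1)_s)\,ds$, where $\sigma_1(A_1)_s$ is the smoothing operator on the fiber determined by the (smooth) symbol of $A_1$ on $\F$. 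Since $\sigma_1(A)$ includes this contribution, discarding $A_1$ makes your final formula internally inconsistent with the statement you are proving. Only the part of $A_1$ microsupported away from $\Sigma$ is negligible (there $\F$ is disjoint from the diagonal, so the diagonal restriction is $O(\hbar^\infty)$). The repair is easy and should be stated: near $\Sigma$ an element of $\text{sc-}I^{\ell}(M\times M;\F)$ can be written in the form \eqref{OscInt} with amplitude rapidly decreasing in $\sigma$, hence a classical symbol of order $m'\le -4$, so your model-case computation applies to it verbatim and yields exactly its share of $\int_S\tr(\sigma_1(A)_s)\,ds$. (Your dismissal of $A_0\in\text{sc-}I^{\ell+m}(\Delta\setminus\Sigma)$ is correct, since its trace is $O(\hbar^{-n-\ell-m})=O(\hbar^{-n-\ell+7/2})$.)
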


\begin{lemma}
If $m'< 0$ and $\ell'=0$,
\begin{equation}\label{lemaQ}
\int u_\h(x,x)\, dx =\frac{1}{(2\pi \h)^n} 2\pi\int a(0,x,x,\mu, p', \h^{-1}\mu)\, dx\,dp'\,d\mu + O(\h^{-n+2}).
\end{equation}
\end{lemma}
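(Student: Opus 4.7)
The argument follows the proof of Lemma \ref{lemmaTwo} essentially verbatim, departing only at the final $\mu$-integration where the negative order of $m'$ yields the sharper bound. First I would insert the cutoff $\rho(\h\sigma)$ via Lemma \ref{LemmaRho} (at the cost of $O(\h^\infty)$), then make the same substitutions $\mu=\h\sigma$ and $\omega=\mu-p_1$ and take the partial Fourier transform in $(s,\omega)$ to rewrite
\[
\int u_\h(x,x)\,dx = \frac{1}{(2\pi\h)^n}\int e^{-i\h\xi_1\xi_2}\,C(\mu,p';\h;\xi)\,d\xi\,dp'\,d\mu + O(\h^\infty),
\]
where $C$ is, as in the earlier proof, the Fourier transform in $(s,\omega)$ of $\int a(s,x,x,(\mu-\omega,p'),\h^{-1}\mu)\rho(\mu)\,dx$.

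Using the one-term Taylor bound $|e^{-i\h\xi_1\xi_2}-1|\le \h|\xi_1\xi_2|$, the constant part $1$ reproduces by Fourier inversion precisely the claimed leading term $\frac{2\pi}{(2\pi\h)^n}\int a(0,x,x,\mu,p',\h^{-1}\mu)\,dx\,dp'\,d\mu$ (since $\rho\equiv 1$ on the compact $p_1$-support of $a$, and hence on the relevant $\mu$-range). The remainder is then controlled by the same chain of inequalities used in Lemma \ref{lemmaTwo}: one bounds $\|\xi_1\xi_2\,c\|_{L^1_\xi}$ by a sum of $L^1_{s,\omega}$ norms of derivatives of $a$ and uses the symbol estimate $(1+\h^{-1}|\mu|)^{m'} = \h^{-m'}(\h+|\mu|)^{m'}$, producing the master bound
\[
C\,\h^{-m'-n+1}\int_{|\mu|\le \mu_1}(\h+|\mu|)^{m'}\,d\mu
\]
for the error.

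The only new step is the evaluation of this $\mu$-integral under the hypothesis $m'<0$. For $m'<-1$ the exponent $m'+1$ is negative and an elementary computation gives
\[
\int_{|\mu|\le\mu_1}(\h+|\mu|)^{m'}\,d\mu = \tfrac{2}{m'+1}\bigl[(\h+\mu_1)^{m'+1}-\h^{m'+1}\bigr] = O(\h^{m'+1}),
\]
the dominant contribution coming from the locally integrable singularity at $\mu=0$. Substituting into the master bound yields an error of size $O(\h^{-m'-n+1}\cdot \h^{m'+1}) = O(\h^{-n+2})$, as required. This directly covers the regime $m'\le -4$ needed in the theorem of the subsection.

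\textbf{Main obstacle.} The only subtle point is the transitional range $-1\le m'<0$, where the master bound above only yields the weaker $O(\h^{-n-m'+1})$. To recover $O(\h^{-n+2})$ uniformly on all $m'<0$ one would Taylor-expand $e^{-i\h\xi_1\xi_2}$ to one further order and handle the resulting $k=1$ correction by integrating by parts in $\mu$: the chain-rule identity
\[
\partial_{p_1} a\bigl|_{p_1=\mu,\ \sigma=\h^{-1}\mu} \;=\; \tfrac{d}{d\mu}\bigl[a|_{(\mu,\h^{-1}\mu)}\bigr] - \h^{-1}\partial_\sigma a\bigl|_{p_1=\mu,\ \sigma=\h^{-1}\mu}
\]
trades a $p_1$-derivative for a $\sigma$-derivative that lowers the symbol order in $\sigma$, producing the missing factor of $\h$ at the cost of a boundary term controlled by $\rho'$, which is $O(\h^\infty)$ because $\rho'$ is supported where $\h^{-1}|\mu|$ is large.
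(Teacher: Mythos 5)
Your main argument is exactly the paper's: the proof given in the paper is the single remark that the bound (\ref{error_bound}) from the proof of Lemma \ref{lemmaTwo} was derived with no assumption on $m'$, and your evaluation $\int_{|\mu|\le\mu_1}(\h+|\mu|)^{m'}\,d\mu=O(\h^{m'+1})$ for $m'<-1$, giving an error $O(\h^{-m'-n+1}\cdot\h^{m'+1})=O(\h^{-n+2})$, is precisely how that remark is cashed in. This is correct and covers the only regime in which the lemma is invoked ($m'\le-4$, from the hypothesis $m\le -7/2$).

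The part of your proposal that does not hold up is the patch for the transitional range $-1\le m'<0$. Write the $k=1$ Taylor correction as a constant times $\h^{-n+1}\int \partial_s\partial_{p_1}a(0,x,x,(\mu,p'),\h^{-1}\mu)\,\rho(\mu)\,dx\,dp'\,d\mu$ and apply your identity $\partial_{p_1}a|_{p_1=\mu,\sigma=\h^{-1}\mu}=\frac{d}{d\mu}\bigl[a|_{(\mu,\h^{-1}\mu)}\bigr]-\h^{-1}\partial_\sigma a|_{p_1=\mu,\sigma=\h^{-1}\mu}$. The total-derivative part does drop out, but not for the reason you give: the decay of the amplitude in $\sigma$ is only $O(|\sigma|^{m'})$, so ``$\rho'$ lives where $\h^{-1}|\mu|$ is large'' yields a fixed power of $\h$, not $O(\h^\infty)$; what kills that term is that $a$ vanishes for $|p_1|\ge\mu_0/2$ while $\rho'$ is supported in $|\mu|\ge\mu_0/2$ and $p_1=\mu$ there. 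More seriously, the surviving term carries the factor $\h^{-1}$, while the gain from lowering the symbol order is only $\int_{|\mu|\le\mu_1}(1+\h^{-1}|\mu|)^{m'-1}\,d\mu=O(\h)$, to be compared with $\int_{|\mu|\le\mu_1}(1+\h^{-1}|\mu|)^{m'}\,d\mu\asymp\h^{-m'}=\h^{|m'|}$ before the trade. Thus the bound on the $k=1$ term moves from $O(\h^{-n+1+|m'|})$ to $O(\h^{-n+1})$: the manipulation makes the estimate worse, and the ``missing factor of $\h$'' is not produced (iterating the same identity just returns you to the original bound). So the range $-1\le m'<0$ is not established by your argument; note that the paper's own one-line proof has the same limitation (and at $m'=-1$ the bound (\ref{error_bound}) only gives $O(\h^{-n+2}\log(1/\h))$), and since the lemma is only used with $m'\le-4$ this does not affect the results, but to prove the statement literally as written one would either restrict to $m'<-1$ or argue differently in the transitional range, e.g.\ by separating the leading $\sigma$-homogeneous part of $a$ and treating its contribution exactly, rather than by absolute-value bounds.
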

\begin{proof}
This follows immediately from (\ref{error_bound}) (which was derived under no assumptions on $m'$).
\end{proof}
\begin{proof}
Starting with equation (\ref{resfrio}), since $m' \le -4$
\[
a_{2}(s,x,y,p) := \int e^{is\sigma}a(s,x,y,p,\sigma)d\sigma
\]
is absolutely convergent and can be extended to a $C^{2}$ function of $s$. In addition, $a_2$ is compactly supported in $s,x,y,p$. Using the stationary phase theorem for $C^{2k}$ amplitudes (Theorem 7.7.5 in \cite{Ho}), we get
\[
\int e^{i\hbar^{-1}sp_1}a_2(s,x,x,-p_1,p')dsdp_1\sim 2\pi\, \hbar\, a_2(0,x,x,0,p'),
\]
and then
\[
\int u_\h(x,x)\,dx = (2\pi)^{-n+1/2} \hbar^{-n-\ell+1/2} \int \sqrt{2 \pi}~ a_2(0,x,x,0,p')dxdp'+O(\hbar^{-n-\ell+3/2}),
\]
where $\sqrt{2 \pi} ~a_2(0,x,x,0,p')=\sqrt{2 \pi} \int e^{is\sigma}a(s,x,x,p)d\sigma_{\big|_{s=0,p_1=0}}$ is the extension of the symbol $\sigma_1$ to the intersection of the Lagrangians.
This is the desired result in the model case.
\end{proof}


\section{Projectors and ``cut" quantum observables}

\label{sec:Projector}

In this section we will prove that, under a mild additional condition on $\partial X_c$, the algebra ${\mathcal A}_{X_c}$ 
contains orthogonal projectors.  We will also prove that, 
in case there exists an $\h$-pseudodifferential operator on $M$, $\hat{P}$, such that:
\begin{enumerate}
\item  The spectrum of $\hat{P}$ is discrete and is contained in $\h\bbZ$, and
\item $X_c = P^{-1}(I)$ for $I\subset\bbR$ a closed interval,
\end{enumerate}
then the spectral projector of $\hat{P}$ associated to the interval $I$ is in  ${\mathcal A}_{X_c}$ 


\subsection{On the Existence of Projectors}\label{ProjectorExistence}

In addition to the assumptions on $\partial X_c$ that we have been making throughout, let us now assume 
that $\partial X_c$ is of contact type.  Recall that this means that there exists a one-form $\beta$ on 
$\partial X_c$ such that (a) $d\beta$ is the pull-back of the symplectic form to $\partial X_c$, and 
(b) $\beta\rfloor \Xi_P$ is constant, where $P$ is a defining function of $X_c$ with periodic flow on $\partial X_c$.

Following the proof of Lemma 5 in  \cite{Do}, one obtains:

\begin{lemma}
There exists an smooth function, which will be called again $P:T^*M\to \mathbb{R}$, such that
\begin{itemize}
\item[(a)]
P is bounded from below and tends to $\infty$ at infinity in the cotangent directions,
\item[(b)]
$\partial X_c = P^{-1}(0)$, and
\item[(c)]
There exists a neighborhood $\mathcal W$ of $\partial X_c$ such that the Hamilton flow of $P$ is $2\pi$-periodic in $\mathcal W$.
\end{itemize}
\end{lemma}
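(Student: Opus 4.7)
The plan is to refine the local defining function of Lemma \ref{Lemma:LocalP} using a contact-type normal form, so that its Hamilton flow is genuinely $2\pi$-periodic in an open neighborhood of $\partial X_c$, and then extend globally by patching.

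First, I would use the contact-type hypothesis to produce a Liouville vector field $Y$ on a neighborhood $\mathcal{W}_0$ of $\partial X_c$. Starting from the tautological form $\alpha$ on $T^*M$ (with $d\alpha=\omega$) and the contact primitive $\beta$ on $\partial X_c$ (with $d\beta=\omega|_{\partial X_c}$), the closed form $\beta - \alpha|_{\partial X_c}$ extends to a closed form on a tubular neighborhood, yielding $\tilde\beta$ with $d\tilde\beta = \omega$ and $\tilde\beta|_{\partial X_c} = \beta$. Defining $Y$ by $\iota_Y\omega = \tilde\beta$ makes $Y$ Liouville, and the condition that $\beta(\Xi_P)$ is a nonzero constant guarantees $Y$ is transverse to $\partial X_c$. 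Its flow then provides a symplectomorphism
\[
\Phi:\bigl(\partial X_c \times (-\epsilon, \epsilon),\, d(e^t \beta)\bigr) \longrightarrow \mathcal{W}_0,
\]
sending $\{t=0\}$ to $\partial X_c$.

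Next, in the $(y,t)$ coordinates I would seek $P$ of the form $P(y,t) = f(t)$. A direct calculation, using that $\beta$ does not involve $dt$ and that the Reeb vector field $R$ of $\beta$ (normalized so $\beta(R) = 1$) satisfies $\iota_R d\beta = 0$, gives $\Xi_P = -f'(t)\,e^{-t}\,R$. Choosing $f(t) = 1 - e^t$ makes $\Xi_P \equiv R$ throughout the slab, so the Hamilton flow of $P$ coincides with the Reeb flow, which is $2\pi$-periodic by the fibrating and Bohr-Sommerfeld hypothesis combined with the contact-type normalization. Moreover $P^{-1}(0) = \partial X_c$ and $dP \neq 0$ there, so $P$ is a bona fide defining function on $\mathcal{W}_0$.

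Finally, I would globalize $P$ by patching with an auxiliary function $G: T^*M \to \mathbb{R}$ outside a smaller neighborhood $\mathcal{W}' \Subset \mathcal{W}_0$: take $G$ smooth, negative and bounded below on the interior of $X_c$, positive on the exterior, and tending to $+\infty$ along cotangent fibers (for instance, $|\xi|_g^2$ for some Riemannian metric $g$ on $M$, shifted by a constant and modified on a compact set so the sign matches $P$ on the overlap). Gluing $P$ with $G$ via a partition of unity subordinate to $\{\mathcal{W}_0,\, T^*M\setminus\overline{\mathcal{W}'}\}$, while preserving the sign across $\partial X_c$, produces the desired global function satisfying (a), (b), (c). The main obstacle is the second step: the contact-type condition is precisely what allows the Hamilton flow to be made \emph{exactly} $2\pi$-periodic in an open neighborhood rather than only on the boundary, by realizing it as the Reeb flow transported via the symplectization; without contact type, the periods would generically drift transversally to $\partial X_c$ and no $f$ of the above form would work.
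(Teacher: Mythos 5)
Your approach --- building a Liouville vector field from the contact form, conjugating to the symplectization model $\bigl(\partial X_c\times(-\epsilon,\epsilon),\, d(e^t\beta)\bigr)$, taking $P=f(t)$ so that $\Xi_P$ is a constant multiple of the Reeb field $R$, and then gluing in a proper function --- is the standard proof and is, as far as I can tell, exactly what the paper's cited Lemma~5 of \cite{Do} does. The transversality of $Y$ from $\beta(\Xi_P)=\mathrm{const}\neq 0$, the computation $\Xi_{f(t)}=-f'(t)e^{-t}R$, and the patching step are all fine.

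There is one small gap in the periodicity claim. With $f(t)=1-e^t$ you get $\Xi_P=R$, whose flow has the Reeb period $T=\oint_\gamma\beta$. This $T$ is indeed a constant (any two fibers cobound a cylinder in $\partial X_c$ tangent to $\ker d\beta$, so $\oint_\gamma\beta$ is leaf-independent by Stokes), but it need not equal $2\pi$: the Bohr-Sommerfeld hypothesis normalizes $\oint_\gamma\alpha$, not $\oint_\gamma\beta$, and the two differ by a period of the closed form $\beta-\alpha|_{\partial X_c}$; you also cannot rescale $\beta$ because $d\beta$ is pinned to $\omega|_{\partial X_c}$. The remedy is trivial --- replace $f$ by $\tfrac{T}{2\pi}(1-e^t)$, equivalently rescale the final $P$ by the constant $T/2\pi$ --- but as written the sentence ``the Reeb flow, which is $2\pi$-periodic'' is not justified.
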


Next we recall (see \cite{HR} Proposition 3.8) how to obtain a quantum version of the previous result:

\begin{lemma}
Let $\widehat P(\h)$ be a semiclassical pseudodifferential operator with principal symbol $P$ and vanishing sub-principal symbol. 
Let $\mu$ be the Maslov index of the trajectories of $\Phi^P$ (the Hamilton flow of $P$)
in $\mathcal W$. Assume the Bohr-Summerfeld conditions \eqref{Eq:BohrSomm}. 
Then there exists a semiclassical pseudodifferential operator $\widehat R_2 (\h)$ of order $-2$ such that for $\epsilon <<1$
\[
\text{Spec} \left (P-\frac{\mu}{4}~\h-\widehat R_2 (\h) \right) \cap [-\epsilon/3,\epsilon/3] \subset \h \mathbb{Z}
\]
when we restrict $\h$ to the sequence $\h=1/N$ with $N$ large.
\end{lemma}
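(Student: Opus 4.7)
The plan is to arrange that $e^{-2\pi i\h^{-1}(\widehat P-\frac{\mu}{4}\h-\widehat R_2)}$ equals the identity microlocally near $\partial X_c$; the spectrum then automatically lies in $\h\bbZ$ on the corresponding spectral subspace. First I would analyze the propagator $U(t)=e^{-it\h^{-1}\widehat P}$. Since the principal symbol $P$ has $2\pi$-periodic Hamilton flow in the neighborhood $\mathcal W$ of $\partial X_c$, Egorov's theorem and the standard construction of semiclassical FIOs give that $U(2\pi)$, microlocalized to $\mathcal W$, is a semiclassical FIO associated to the identity canonical relation, hence microlocally a semiclassical pseudodifferential operator on $\mathcal W$. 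Its principal symbol is a scalar of modulus one, obtained as the product of (i) the Maslov factor $e^{i\pi\mu/2}$, (ii) the Bohr--Sommerfeld phase $\exp(i\h^{-1}\oint_\gamma \alpha)$ along each closed orbit $\gamma\subset\partial X_c$, which equals $1$ once $\h=1/N$ by \eqref{Eq:BohrSomm}, and (iii) a subprincipal contribution that vanishes because $\mathrm{sub}\,\widehat P=0$.

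Next I would pass to $\widetilde U:=e^{-2\pi i\h^{-1}(\widehat P-\frac{\mu}{4}\h)}=e^{i\pi\mu/2}U(2\pi)$, so that $\widetilde U$ is microlocally a $\h$-pseudodifferential operator on $\mathcal W$ with principal symbol $1$. Write its total symbol as $1+\sum_{j\ge 2}\h^j q_j$ near $\partial X_c$ (the $j=1$ term vanishes by the subprincipal hypothesis and the construction of the Maslov shift). The core step is the usual order-by-order Birkhoff-type construction: find a formal sum $\widehat R_2\sim\sum_{j\ge 2}\h^j\widehat r_j$ of pseudodifferential operators, each of classical order zero and supported in $\mathcal W$, such that replacing $\widehat P$ by $\widehat P-\frac{\mu}{4}\h-\widehat R_2$ kills the expansion of $\widetilde U-I$ to all orders. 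At the $\h^j$-level one must solve a cohomological equation of the form $\widehat r_j = q_j/(2\pi i)$ modulo lower order terms obtained from Duhamel's formula applied to the $2\pi$-periodic classical flow; since the flow is periodic and we are working on $\mathcal W$, there is no small divisor problem and $\widehat r_j$ can be taken pseudodifferential. I would then Borel-sum the series to an honest semiclassical pseudodifferential operator $\widehat R_2$ of order $-2$, obtaining $\widetilde U_{\mathrm{new}}=I$ modulo $\mathcal O(\h^\infty)$ microlocally on $\mathcal W$.

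Finally, let $\widehat H:=\widehat P-\frac{\mu}{4}\h-\widehat R_2$ and suppose $\widehat H\phi=\lambda\phi$ with $\lambda\in[-\epsilon/3,\epsilon/3]$. A standard elliptic/Gårding argument applied to $\widehat P-\lambda$ away from $P^{-1}(0)$ shows that, for $\epsilon$ small and $\h$ small, $\phi$ is microlocalized in a neighborhood of $\partial X_c$ contained in $\mathcal W$. Applying $e^{-2\pi i\h^{-1}\widehat H}=I+\mathcal O(\h^\infty)$ to $\phi$ yields $e^{-2\pi i\h^{-1}\lambda}=1+\mathcal O(\h^\infty)$, and combining this with the restriction $\h=1/N$ forces $\lambda\in\h\bbZ$ exactly, for $N$ sufficiently large.

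\textbf{Main obstacle.} The delicate point is the order-by-order construction of $\widehat R_2$: one must check that the cohomological equation at each step is solvable within the class of pseudodifferential operators supported in $\mathcal W$, which relies on averaging over the $2\pi$-periodic classical flow and on the vanishing of the obstruction once the Maslov shift $\mu\h/4$ and the Bohr--Sommerfeld normalization $\h=1/N$ have been inserted. The localization step for low-energy eigenfunctions and the passage from an $\mathcal O(\h^\infty)$ statement to an exact spectral statement via $\h=1/N$ are routine but must be done carefully.
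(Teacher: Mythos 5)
Your overall strategy (study the $2\pi$-propagator of $\widehat P$, use the periodicity of the flow, the Maslov shift, the common action together with Bohr--Sommerfeld and $\h=1/N$, and then correct $\widehat P$ by a lower-order term so that the propagator becomes the identity near $\partial X_c$) is the same as the paper's, which follows \cite{HR}. But there is a genuine gap at the last step. Your construction of $\widehat R_2$ is only asymptotic: after the order-by-order cohomological corrections and Borel summation you get
$e^{-2\pi i\h^{-1}\widehat H}=I+O(\h^{\infty})$ microlocally, hence for an eigenvalue $\lambda$ of $\widehat H$ in $[-\epsilon/3,\epsilon/3]$ only $e^{-2\pi i N\lambda}=1+O(N^{-\infty})$, i.e. $\mathrm{dist}(\lambda,\h\bbZ)=O(\h^{\infty})$. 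This does \emph{not} force $\lambda\in\h\bbZ$ exactly; the restriction $\h=1/N$ cannot upgrade an $O(\h^\infty)$ statement to an exact one (its role is only to make the constant action phase $e^{-2\pi i\h^{-1}\gamma}$ equal to $1$). The lemma asserts exact containment of the spectrum in $\h\bbZ$, so as written your proof proves a strictly weaker statement.

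The paper gets exactness by avoiding the induction altogether: since $e^{-2\pi i\h^{-1}(\widehat P-\frac{\mu}{4}\h-\gamma)}$ is a function of $\widehat P$, the error operator $\widehat R(\h)$ in $\rho(\widehat P)e^{-2\pi i\h^{-1}(\widehat P-\frac{\mu}{4}\h-\gamma)}=\rho(\widehat P)(I+\h\widehat R(\h))$ commutes with $\widehat P$, and one can define $\widehat R_2=-\frac{\h}{2\pi i}\log\bigl(I+\h\widehat R(\h)\rho(\widehat P)\bigr)$ \emph{exactly} by functional calculus; then $\rho(\widehat P)e^{-2\pi i\h^{-1}(\widehat P-\frac{\mu}{4}\h-\widehat R_2)}$ is literally the identity on eigenfunctions of $\widehat P$ with eigenvalue in $[-\epsilon/2,\epsilon/2]$, and the spectral theorem gives $\lambda\in\h\bbZ$ on the nose. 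To repair your argument you would need an analogous exact absorption of the residual error (e.g.\ a final logarithm step applied to an error operator arranged to commute with $\widehat P$); note also that your Borel-summed $\widehat R_2$ need not commute with $\widehat P$, whereas the paper's does. A secondary, more minor point: your claim that the $\h^1$ term of the symbol of $e^{-2\pi i\h^{-1}(\widehat P-\frac{\mu}{4}\h)}$ vanishes because $\mathrm{sub}\,\widehat P=0$ is unjustified (and unnecessary): vanishing subprincipal symbol gives symbol $1+O(\h)$, and an $O(\h)$ propagator error is already killed by an $O(\h^2)$ correction to the Hamiltonian because of the $\h^{-1}$ in the exponent, so $\widehat R_2$ still has order $-2$.
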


\begin{proof}  Pick $\epsilon>0$ such that $P^{-1}[-\epsilon, \epsilon]\subset \mathcal W$.
Let $\rho$ be a smooth function with support in $[-\epsilon,\epsilon]$, such that $\rho\equiv 1$ on $[-\epsilon/ 2, \epsilon/2]$. 
Let
\[
\gamma=\frac{1}{2\pi} \int_0^{2\pi}p(t)\dot{x}(t)-P(x(t),p(t)) dt
\]
be the (common) action of the trajectories of the Hamilton flow of $P$ in $\mathcal W$.  Then
$e^{-2\pi i \h^{-1} \left( \widehat P- \frac{\mu}{4}~\h-\gamma \right)}$ is microlocally in $\mathcal W$
a pseudodifferential operator with symbol identically equal to one, and thus one can write
\[
\rho(\widehat P) e^{-2\pi i \h^{-1} \left( \widehat P- \frac{\mu}{4}~\h-\gamma \right)}=\rho(\widehat P) (I+\h \widehat R(\h)),
\]
where $\widehat R(\h)$ is a zeroth order $\h$-$\Psi$DO.  
Recall that $\gamma$ is an integer in $\partial X_c$, and therefore, for $\h=1/N$ we obtain
\[
\rho(\widehat P) e^{-2\pi i \h^{-1} \left( \widehat P- \frac{\mu}{4}~\h \right)}=\rho(\widehat P) (I+\h \widehat R(\h)),
\]
Since $I+\h \widehat R(\h)\rho(\widehat P)$ has spectrum close to $1$ for $\h<<1$, one can then define for $\h$ small
\[
\widehat R_2=-\frac{\h}{2\pi i}\log(I+\h \widehat R(\h)\rho(\widehat P)),
\]
and since $\widehat R_2$ commutes with $\widehat P$, we obtain
\[
\rho(\widehat P)e^{-2\pi i\h^{-1}\left( \widehat P-\frac{\mu}{4}~\h-\widehat R_2 \right) }=
\rho(\widehat P)\left( I+\h \widehat R(\h) \right) \left( I+\h \widehat R(\h) \rho(\widehat P) \right)^{-1}.
\]
Since $\rho\equiv 1$ on $[-\epsilon/2,\epsilon/2]$, the spectral theorem guarantees that the above operator is the identity on any eigenfunction
of $\hat P$ with eigenvalue in $[-\epsilon/2,\epsilon/2]$.   Since, for $\h$ small enough, the spectrum of 
$\widehat P-\frac{\mu}{4}~\h-\widehat R_2 $ in $[-\epsilon/3, \epsilon/3]$ corresponds to eigenfunctions of $\hat P$ with
eigenvalues in $[-\epsilon/2,\epsilon/2]$, the result follows.
\end{proof}

Let us define $\widehat P_2:=\widehat P- \frac{\mu}{4}~\h-\widehat R_2$, whose principal symbol continues to be $P$. 
Let $\chi$ be the characteristic function on $(-\infty, 0 ]$, and define the projector
\[
\Pi=\chi \left( \widehat P_2 \right)
\]

\begin{theorem}
\label{thm:ProjInAlg}
The projector $\Pi$ defined above belongs to the class $J^{-1/2,1/2}(M\times M, \Delta, \F)$.
\end{theorem}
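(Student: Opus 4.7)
\medskip\noindent\textbf{Proof plan.}
The strategy is to represent $\Pi$ as a sum of two pieces, one behaving as a pseudodifferential operator and one that is (microlocally) a spectral sum coming from the periodic propagator of $\widehat P_2$ near $\partial X_c$; then verify by inspection that the total is of the required form for $J^{-1/2,1/2}$.

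First I would fix a cutoff $\rho\in C^\infty_c(-\epsilon/3,\epsilon/3)$ with $\rho\equiv 1$ on $[-\epsilon/4,\epsilon/4]$, and decompose
\[
\Pi \;=\; (1-\rho)(\widehat P_2)\,\chi(\widehat P_2) \;+\; \rho(\widehat P_2)\,\chi(\widehat P_2).
\]
Since $\rho\equiv 1$ near the jump of $\chi$, the function $f=(1-\rho)\chi$ is smooth and compactly supported on the real line away from $0$, so $f(\widehat P_2)$ is a genuine semiclassical pseudodifferential operator (e.g.\ by the Helffer--Sj\"ostrand formula), with principal symbol $f(P)=(1-\rho(P))\mathbf 1_{\{P\le 0\}}$. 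In particular this piece is microsupported strictly inside $X_c$, lies in $\text{sc-}I^{0}(M\times M;\Delta)$, and therefore sits in $J^{-1/2,1/2}$ with the correct diagonal symbol up to a smooth correction supported near $\partial X_c$.

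The substantive part is $\Pi_{\rm near}:=\rho(\widehat P_2)\chi(\widehat P_2)$. Microlocally on a neighborhood of $\partial X_c$ the spectrum of $\widehat P_2$ lies in $\hbar\mathbb{Z}$ (with $\hbar=1/N$), so the propagator $U(t)=e^{-it\hbar^{-1}\widehat P_2}$ is $2\pi$--periodic there modulo operators of order $O(\hbar^\infty)$. Using the Fourier expansion on $S^1$,
\[
\Pi_{\rm near} \;=\; \frac{1}{2\pi}\int_0^{2\pi} T(t)\,\rho(\widehat P_2)\,U(t)\,dt,
\qquad T(t)\;:=\;\sum_{k\le 0} e^{itk},
\]
where $T\in\mathcal D'(S^1)$ is conormal to $\{t=0\}$ with classical symbol of order $0$ (its Fourier coefficients are the step sequence). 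Thus $T$ decomposes as a conormal distribution at $t=0$ plus a smooth function on the rest of the circle. This is precisely the building block of a Guillemin--Uhlmann operator: the smooth part of $T$ convolved with $U(t)$ produces a semiclassical FIO associated with $\mathcal F\partial X_c$ (swept out as $t$ runs over $S^1$), while the conormal singularity at $t=0$ produces the pseudodifferential contribution along $\Delta$.

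To put this into the form of Definition \ref{def:JGenCase} I would use a local Fourier integral representation of $\rho(\widehat P_2)\,U(t)$ with generating function adapted to the Hamilton flow of $P$, and then insert the model coordinates that carry $\Delta$ to $\Delta^n$ and $\mathcal F\partial X_c$ to $\mathcal F\partial X_c^n$. The integration variable $t$ plays the role of the variable $s$, and the Fourier dual variable to $t$ (the Fourier index $k$ in $T$) plays the role of $\sigma$; the classical symbol $T(t)$ of order $0$ translates exactly into the $m'=0$ classical symbol in $\sigma$, i.e.\ $m=1/2$. Since $U(t)$ is a unitary FIO of semiclassical order $0$, one gets $\ell'=0$, hence $\ell=-1/2$. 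Equivalently, the cleanest verification is via the $\hbar$--transform: applying Theorem \ref{SymbCalculus}, one checks that the $\hbar$--transform of $\Pi$ is exactly a Szeg\H{o}-type (Boutet de Monvel--Guillemin) projector on $M\times S^1$ associated to the homogenized pair $(\widetilde\Delta,\widetilde{\mathcal F\partial X_c})$, which is the prototype of a Guillemin--Uhlmann class of bidegree $(-1/2,1/2)$. The expected main obstacle is to ensure the proper regularization of the distribution $T(t)$ so that the resulting kernel is honestly a classical $\hbar$--dependent symbol in the sense of Definition \ref{def:AlgModelCase} (the pole of $1/(1-e^{-it})$ at $t=0$ must match the conormal singularity of the diagonal symbol at $\Sigma$), and to verify the symbolic compatibility of Theorem \ref{Thm:SymbComp}, which will pin down $\sigma_0(\Pi)=\mathbf 1_{X_c}$ and $\sigma_1(\Pi)_s$ as the Szeg\H{o} projector on the fiber $F_s$.
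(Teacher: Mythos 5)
Your proposal is correct and follows essentially the same route as the paper: the same cutoff decomposition $\Pi=(1-\rho)\chi(\widehat P_2)+\rho\,\chi(\widehat P_2)$, with the near-boundary piece expressed as the ($2\pi$-periodic) propagator integrated against the step Fourier series $1/(1-e^{it})$, a conormal distribution on $S^1$ whose singularity at $t=0$ produces the diagonal part and whose smooth part sweeps out $\mathcal{F}\partial X_c$, giving the hybrid kernel of bidegree $(-1/2,1/2)$. The only inessential difference is ordering: the paper first conjugates $\rho\,\chi(\widehat P_2)$ to the exact model projector for $\hbar D_\theta$ via an $S^1$-equivariant semiclassical Fourier integral operator and then writes this formula in the model, whereas you write it directly for $\widehat P_2$ (using the integrality of its spectrum in the cutoff window) and pass to model coordinates afterwards.
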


\begin{proof}  Let $\rho$ be the cut-off function of the previous proof.
We decompose
\[
\Pi=(1-\rho)\chi \left( \widehat P_2 \right) +\rho \chi \left( \widehat P_2 \right)
\]

Clearly $(1-\rho)\chi \left( \widehat P_2 \right)$ is a semiclassical pseudodifferential operator. 
Therefore we need to prove that $\rho ~\chi \left( \widehat P_2 \right)$ belongs to $J^{-1/2,1/2}$.
This operator has microsupport in $\mathcal W$. 

We take $\mathbb{R}^{n-1}\times S^1$ as the model case with coordinates $(x,\theta)$ and $T^{*}(\bbR^{n-1}\times S^1)$ with coordinates $(x,\theta;p,\tau)$. Let $\Pi^E=\Pi_{N}^E$ be the 
projector on eigenfunctions of $\widehat P^n:=\h D_\theta =\frac{\h}{i}\frac{\partial}{\partial \theta}$ 
with eigenvalues greater than or equal to $E/N$, where $E\in \mathbb{Z}$ is a constant. 
(Here $\theta$ is the $2\pi$-periodic variable in $S^1$.)
Let $T_s=e^{-i s \h^{-1} \widehat P_2}$, and let 
\[
T_s^n=e^{-i \h^{-1} s ~\widehat P^n }
\]
be the translation representation on $L^2(\mathbb{R}^{n-1}\times S^1)$. Let $(x_0,p_0)\in \mathcal W$. 
As in \cite{Gui, GuLe}, there exist an $S^1$-invariant neighborhood $\mathcal U\subset \mathcal W$  of 
$(x_0,p_0)$ (the circle action given by
the Hamilton flow of $P$), an $S^1$-invariant open set $\mathcal U^n\subset T^*(\mathbb{R}^{n-1}\times S^1)$, and an $S^1$-equivariant canonical transformation
\[
\phi:\mathcal U\to \mathcal U^n,
\]
which sends $\partial X_c \cap {\mathcal U}$ into 
$\left\{ (x,\theta ; p , \tau) \in T^*\left( \mathbb{R}^{n-1} \times S^1 \right)~ \big |~ \tau=E \right\}$. 
Again, as in \cite{Gui, GuLe}, one can show that there exists a semiclassical zeroth order Fourier integral operator 
\[
F:L^2(M)\to L^2(\mathbb{R}^{n-1}\times S^1),
\]
with microsupport on $\mathcal U\times \mathcal U^n$ such that
\[
F^* F=I_{\mathcal U^n},~~F F^*=I_{\mathcal U},
\]
and
\[
F \rho~\chi (\widehat P_2)=\rho \left( \Pi^E \right) F,
\]
This reduces the proof to the model case.   It suffices to show that $\Pi \widehat Q$ is on the algebra, for any zeroth order 
compactly supported semiclassical pseudodifferential operator $\widehat Q$ in $\mathbb{R}^{n-1} \times S^1$. 
Note that
\[
 \Pi^E~\widehat Q=\frac{1}{2\pi} \int_0^{2\pi} e^{-i N s \widehat P^n } e^{i N s E}~ \widehat Q~\frac{1}{1-e^{is}} ds
\] 
The operator $e^{-i N s \widehat P^n } e^{i N s E} \widehat Q$ is a semiclassical FIO with Lagrangian
\[
\left\{ \left( (x,\theta ~;~p, \tau=E) , (y=x, \alpha+s = \theta~;~ -p, -\tau=-E) \right) \big | x \in \mathbb{R}^{n-1}, \theta, \alpha \in [0, 2\pi] \right\}.
\]
Therefore, in local coordinates, the Schwartz kernel of $\Pi^E \widehat Q$ can be written as
\begin{equation}
\label{KernelPiQ}
\frac{1}{(2\pi \h)^n} \frac{1}{2\pi} \int e^{i N \left( ~(x-y) p+(\theta-\alpha-s) (\tau-E) ~\right)}q(x,\theta,y,\alpha,p,\tau,\h) \frac{1}{1-e^{is}} dp d\tau ds,
\end{equation}
where $q$ is symbol with expansion in $\h$. Notice that $\frac{1}{1-e^{is}}$ is a 
conormal distribution in $s=0$ and equation \eqref{KernelPiQ} shows the hybrid nature of the amplitude of the projector. 
Equation (\ref{KernelPiQ}) also proves that $\Pi^E \widehat Q \in J^{-1/2,1/2}$, with principal symbol $\chi_{X_c}Q(x,\theta~;~p,\tau)$ 
in the diagonal, and
\[
\frac{1}{\sqrt{2\pi}} \frac{Q(\Phi_s^P(x,\theta~;~p,\tau))}{1-e^{is}}
\]
in the flow-out. 
\end{proof}

\begin{remark}If one has an $\h$-pseudodifferential operator $\widehat P$ with discrete spectrum such that 
\[
{\text{Spec}}({\widehat P(\h)})\subset \h\bbZ,
\]
then the previous proof shows that, for any given $E_1,E_2\in \mathbb{Z}$ such that $E_1<E_2$, and for $j=1,2$:
\[
\text{the trajectories}\text{ on } P^{-1}(E_j)\text{ satisfy the Bohr-Sommerfeld condition \eqref{Eq:BohrSomm}.}
\]
Then the orthogonal projector onto 
\[
\calH_N = \mbox{span of eigenvectors of }\widehat{P}(\h)\ \mbox{with eigenvalues in } [E_1 , E_2],
\]
is in the algebra $J^{-1/2,1/2}\left( M\times M ; \Delta, \F \right)$, associated to 
\[
X_c=\left\{ \x \in T^{*}M ~\big | ~ E_1 \le P(\x) \le E_2 \right\}.
\]
\end{remark}


\subsection{Cut quantum observables}

In this section we fix a projector $\Pi$ as in \S 5.1, and consider ``cut" quantum observables, by which we mean
operators of the form
\[
\Pi \widehat Q \Pi
\]
where $Q$ is a pseudodifferential operator on $M$.  By Theorem \ref{thm:ProjInAlg} these operators are in $J^{-1/2,1/2}\left( M\times M ;\Delta, \F \right)$. The symbolic properties of these operators are summarized by the following Proposition:

\begin{proposition}\label{Projector}  Let $\widehat{Q}(\h)$ be a zeroth-order semiclassical pseudodifferential operator with compact 
microsupport.  Then  $\Pi \widehat{Q} \Pi $ is in the class $J^{-1/2, 1/2}(M\times M~;~ \Delta, \mathcal{F} \partial X_c)$. Its symbols, 
ignoring Maslov factors, are as follows:
\begin{equation}\label{sigma_0}
\begin{array}{llllll}
\sigma_0  (\Pi \widehat Q \Pi)(\x,\x) = \chi_{X_c} (\x) Q(\x)~\sqrt{dx \wedge dp}, \\ \\
\end{array}
\end{equation}
\begin{equation}\label{sigmaOne}
\begin{array}{llllll}
\sigma_1 \left( \Pi \widehat Q \Pi\right)_s= \Pi_{F_s} M_{|_{Q_{F_s}}} \Pi_{F_s}  \\ \\
\end{array}
\end{equation}
where $\chi_{X_c}$ is the characteristic function of $X_c$,
$\x=(x,p)\in T^{*}M$, $F_s$ is the fiber above $s\in S$, $Q_{|_{F_s}}$ is the restriction of $Q$ to $F_s$, 
$M_{Q_{|_{F_s}}}$ is the operator ``multiplication by $Q_{|_{F_s}}$'',  and $\Pi_{F_s}$ is the Szeg\"o projector in the orbit $F_s$, i.e., for 
$u:F_s\to \mathbb{C}$ smooth, $u(\Phi_s^P \bar y)=\sum_j u_j(\bar y) \frac{e^{ijs}}{\sqrt{2\pi}}$, 
$[ \Pi_{F_s}u ](\bar x)=\sum_{j\ge 0} \frac{u_r(\x)}{\sqrt{2\pi}}$.
\end{proposition}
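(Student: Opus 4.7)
The plan is to derive everything from the algebra structure established in Propositions \ref{CompoModelCase} and \ref{prop:SymbCompoModelCase} together with the explicit form of $\Pi$ produced in Theorem \ref{thm:ProjInAlg}. First, membership in $J^{-1/2,1/2}$ is immediate: $\Pi \in J^{-1/2,1/2}$ by Theorem \ref{thm:ProjInAlg}, any compactly microsupported zeroth-order semiclassical pseudodifferential operator $\widehat Q$ lies in $\text{sc-}I^{0}(\Delta)\subset J^{-1/2,1/2}$ (take $\ell=-1/2$, $m=1/2$), and $J^{-1/2,1/2}$ is closed under composition. Hence $\Pi \widehat Q \Pi\in J^{-1/2,1/2}$.

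Next I would compute $\sigma_0$ using multiplicativity on the diagonal, which is part of Proposition \ref{prop:SymbCompoModelCase}. The principal symbol of $\Pi=\chi_{(-\infty,0]}(\widehat P_2)$ on $\Delta$ is $\chi_{(-\infty,0]}(P)=\chi_{X_c}$ by the functional calculus for $\h$-pseudodifferential operators applied off the boundary, while $\sigma_0(\widehat Q)=Q$. Using $\chi_{X_c}^2=\chi_{X_c}$ one obtains $\sigma_0(\Pi \widehat Q \Pi)(\x,\x)=\chi_{X_c}(\x)\,Q(\x)$, giving \eqref{sigma_0}.

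For the flow-out symbol I would invoke the fiber-wise composition formula of Proposition \ref{prop:SymbCompoModelCase}, which represents $\sigma_1$ as composition of pseudodifferential operators on each fiber $F_s$. Two identifications are needed. From the explicit computation carried out inside the proof of Theorem \ref{thm:ProjInAlg} (specializing the kernel \eqref{KernelPiQ} to $\widehat Q=I$), the symbol of $\Pi$ on $\F$ equals $\frac{1}{\sqrt{2\pi}(1-e^{is})}$ along an orbit, which, under the identification of half-densities on $\F$ with operators on fibers provided by Proposition \ref{prop:Symb1PsiDO}, is precisely the Szeg\"o projector $\Pi_{F_s}$ on the circle $F_s$. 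For the pure pseudodifferential operator $\widehat Q$ the symbol $\sigma_1(\widehat Q)$ vanishes on $\F\setminus\Sigma$, but the compatibility condition of Theorem \ref{Thm:SymbComp}, applied with $m'=0$, forces
\[
\sigma\bigl(\sigma_1(\widehat Q)_s\bigr)_{\x}(\tau)=\lim_{\y\to\x}\frac{Q(\y)}{P^{0}(\y)}=Q(\x),
\]
independent of $\tau$; a zeroth-order PsDO on $F_s$ whose principal symbol is independent of the cotangent direction is, modulo lower-order terms, multiplication by that function on position, so $\sigma_1(\widehat Q)_s=M_{Q|_{F_s}}$. Fiber-wise composition then yields $\sigma_1(\Pi \widehat Q \Pi)_s=\Pi_{F_s}\,M_{Q|_{F_s}}\,\Pi_{F_s}$, which is \eqref{sigmaOne}.

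The main obstacle I anticipate is the last identification: $\widehat Q$ as a pure PsDO carries no mass on $\F\setminus\Sigma$, and the multiplication operator must be recovered from the conormal extension across $\Sigma$ encoded in the compatibility condition. A safer route, which I would use as a cross-check, is to bypass this identification by computing $\sigma_1(\Pi \widehat Q)_s$ directly from \eqref{KernelPiQ}: that computation (already appearing at the end of the proof of Theorem \ref{thm:ProjInAlg}) produces the symbol $\frac{Q(\Phi_s^P\x)}{\sqrt{2\pi}(1-e^{is})}$, which is manifestly the Schwartz kernel of $\Pi_{F_s}\,M_{Q|_{F_s}}$ on $F_s$; a further application of the fiber-wise composition law with $\Pi$ on the right then closes the argument without having to evaluate $\sigma_1(\widehat Q)$ alone.
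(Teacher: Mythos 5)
Your proposal is essentially correct, and the route you present as a ``cross-check'' is in substance the paper's own proof: the paper obtains $\sigma_1(\Pi\widehat Q\Pi)$ by composing the flow-out symbol of $\Pi\widehat Q$ already computed in the proof of Theorem \ref{thm:ProjInAlg} (the kernel \eqref{KernelPiQ}) with the flow-out symbol of $\Pi$, using the composition law on $\F$ (relation (0.2) of \cite{AUh}, equivalently the fiber-wise law of Proposition \ref{prop:SymbCompoModelCase}), and then identifies the resulting kernel, via Fourier series along the orbit, with that of $\Pi_{F_s}M_{Q|_{F_s}}\Pi_{F_s}$; your membership and $\sigma_0$ arguments likewise match the paper's. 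The one genuine soft spot is the step you yourself flag in your primary route: the compatibility condition of Theorem \ref{Thm:SymbComp} only fixes the \emph{principal} symbol of $\sigma_1(\widehat Q)_s$, hence determines that fiber operator only modulo lower-order terms, so by itself it cannot deliver the exact identity \eqref{sigmaOne} (and note a pure $\Psi$DO contributes to $\sigma_1$ only through the conormal extension across $\Sigma$, where the multiplication operator lives as a delta on the fiber diagonal). Since you fall back on the direct computation of $\sigma_1(\Pi\widehat Q)_s$ from \eqref{KernelPiQ} and then compose with $\sigma_1(\Pi)_s=\Pi_{F_s}$, the argument as a whole stands and coincides with the paper's.
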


\begin{remark}
Notice that the Szeg\"o projector $\Pi_{F_s}$ is a classical pseudodifferential operator with principal symbol $\chi_{(T^* F_s)^+}$, where 
$(T^* F_s)^+$ is the part with positive momentum variable, in the direction of the Hamilton flow. 
This function is smooth in $T^*F_s \setminus 0$. 
\end{remark}

\begin{remark}
The symbol of $\Pi_{|_{F_s}} M_{Q_{|_{F_s}}} \Pi_{|_{F_s}}$ is $\chi_{(T^* F_s)^+} Q_{|_{F_s}}$, which agrees with the symbol in the diagonal, restricted to the intersection. This is the so-called symbolic compatibility condition referred to in Theorem \ref{Thm:SymbComp}.
\end{remark}

\begin{proof}
The first part was proven in Theorem \ref{thm:ProjInAlg}. The principal symbol in the diagonal is clear. Using the relation (0.2) in \cite{AUh}, we obtain that for $\y \in \partial X_c$, $s\neq 0$,
\[
\sigma_{1}(\Phi_{s}^P (\y)\ ;\ \y)=\frac{1}{\sqrt{2\pi}}\frac{1}{2\pi} \int_0^{2\pi}\frac{Q(\Phi_{s-\tilde s}^P (\y))}{1-e^{i(s-\tilde s)}}\frac{1}{1-e^{i \tilde s}}d\tilde s
\]
\[
=\frac{1}{\sqrt{2\pi}}\frac{1}{2\pi} \int_{0}^{2\pi}\frac{Q(\Phi_{\tilde s}^P(\y))}{1-e^{i\tilde s}} ~\frac{1}{1-e^{i (s-\tilde s)}}d \tilde s .
\]
Since $Q(\Phi_{\tilde s}^P(\y))$ is a smooth $2\pi$-periodic function in $\tilde s$, there exists a sequence of functions $\left\{ Q_{j}(\y) \right\}_{j=-\infty}^{\infty}$ such that
\[
Q(\Phi_{\tilde s}^P(\y))=\sum_{j=-\infty}^{\infty}Q_{j}(\y) \frac{e^{ij \tilde s}}{\sqrt{2\pi}},
\]
and the symbol becomes:
\[
\sigma_{1}(\Phi_{s}^P (\y)\ ; \ \y ) = \frac{1}{\sqrt{2\pi}}\frac{1}{2\pi} \int \left( \sum_{j}\frac{Q_{j}(\y)}{\sqrt{2\pi}} e^{ij\tilde s} \right) \left( \sum_{k\ge 0}e^{ik \tilde s} \right) \left( \sum_{k' \ge 0}e^{ik' (s-\tilde s)} \right)d \tilde s
\]
\[
=\frac{1}{\sqrt{2\pi}} \frac{1}{1-e^{is}}\left[
\sum_{j \ge 0}e^{ijs}\frac{Q_{j}(\y)}{\sqrt{2\pi}} + \sum_{j<0} \frac{Q_j(\y)}{\sqrt{2\pi}}
\right].
\]

We now interpret this as the kernel of an operator acting on the fibers of $\partial X_c\to S$.
Let us consider a fiber $F_s \subset \partial X_c$, and a function $u:F_s\to \mathbb{C}$.  For every fixed $\y \in \partial X_c$, let $u_r(\y)$ and $Q_j(\y)$ be the corresponding Fourier coefficients in each decomposition 
\[
u(\Phi_s^P \y)=\sum_{r=-\infty}^\infty u_r(\y)\frac{e^{irs}}{\sqrt{2\pi}}, ~~Q(\Phi_s^P \y)=\sum_{j=-\infty}^\infty Q_j(\y)\frac{e^{ijs}}{\sqrt{2\pi}}
\]
As a pseudodifferential operator, the symbol in in the flow-out of $\Pi \widehat Q \Pi$ acting the function $u$, is given by
\begin{equation}
\label{eq:PiQPiSymbOp}
\begin{aligned}
& \left[ \sigma_{1}\left( \Pi \widehat Q \Pi \right)_s ~u \right] (\x)=\int_{F_s}\sigma_1(\Pi \widehat Q \Pi)(\x,\y)u(\y)d\y
 =\int_{S^1}\sigma_1(\Pi \widehat Q \Pi)(\x,\Phi_s^P \x)u(\Phi_s^P \x)ds \\
&=\int \frac{1}{\sqrt{2\pi}} \frac{1}{1-e^{-is}} \left[ \sum_{j\ge 0}\frac{Q_j(\x)}{\sqrt{2\pi}}+\sum_{j<0}\frac{Q_j(\x)}{\sqrt{2\pi}}e^{ijs} \right] \sum_{r} u_r(\x) \frac{e^{irs}}{\sqrt{2\pi}}ds \\
&=\sum_{r\ge 0, j\ge -r}u_{r}(\x) \frac{Q_j(\x)}{\sqrt{2\pi}}.
\end{aligned}
\end{equation}

On the other hand,
\[
\left[ \Pi_{F_s} M_{|_{Q_{F_s}}} \Pi_{F_s} ~(u) \right] (\x)=\Pi_{F_s} M_{|_{Q_{F_s}}}\left( \sum_{r\ge 0}\frac{u_r (\y)}{\sqrt{2\pi}} \right) (\x)=\Pi_{F_s} \left(Q(\y) \sum_{r\ge 0}\frac{u_{r}(\y)}{\sqrt{2\pi}} \right) (\x)
\]
\[
=\Pi_{S^1}\left( Q(\Phi_s^P \x)\sum_{r\ge 0}\frac{u_r (\Phi_s^P \x)}{\sqrt{2\pi}} \right)_{\big |s=0}=\Pi_{S^1}\left( \sum_{j\in \mathbb{Z},r\ge 0} \frac{Q_j(\x)}{\sqrt{2\pi}} u_r (\x) \frac{e^{i(j+r)s}}{\sqrt{2\pi}} \right)_{\big |_{s=0}}
\]
\[
=\sum_{r\ge0, j\ge -r} Q_j(\x) \frac{u_r (\x)}{\sqrt{2\pi}},
\]
which agrees with equation \eqref{eq:PiQPiSymbOp}. This proves
\[
\sigma_{1}\left( \Pi \widehat Q \Pi \right)_s=\Pi_{F_s} M_{|_{Q_{F_s}}} \Pi_{F_s},
\]
which yields (\ref{sigmaOne}) after applying Proposition \ref{prop:SymbCompoModelCase}.

\end{proof}

\medskip
Operators of the form $\Pi_N \widehat{Q}_N \Pi_N:\mathcal{H}_N\to \mathcal{H}_N$ generalize Toeplitz matrices,
 and this is reflected in its principal symbol in $\F$.  (In case $M=S^1$ and $\Pi_N$ the projector onto the span of
 $\{e^{ij\theta},\ j=0,\ldots , N\}$, the $\Pi_N \widehat{Q}_N \Pi_N$ are to leading order
 the generalized Toeplitz matrices of \cite{GS}, page 84.)


\subsubsection{Applications: A symbolic proof of the Szeg\"o Limit Theorem}
\label{sec:ApplSzego}

We begin with the functional calculus, the heart of which is the following

\begin{lemma}
\label{Lemma_ExpAlgebra}
Let $\widehat Q$ be a self-adjoint pseudodifferential operator of order zero on $M$.  Then
\[
\Pi\,e^{-i t\Pi \widehat Q \Pi} \in J^{-1/2,1/2}\left( M \times M;  \Delta, \mathcal{F} \partial X_c \right) .
\]
\end{lemma}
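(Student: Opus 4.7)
The plan is to observe that $\widehat A:=\Pi\widehat Q\Pi$ commutes with $\Pi$ (since $\Pi^2=\Pi$ gives $\Pi\widehat A=\widehat A\Pi=\widehat A$), so that $U(t):=\Pi e^{-it\widehat A}$ satisfies the operator ODE
\[
\partial_t U(t)=-i\widehat A\,U(t),\qquad U(0)=\Pi.
\]
I would construct an approximate solution $V(t)\in J^{-1/2,1/2}$ of this ODE modulo $O(\hbar^{\infty})$ smoothing, and then deduce $V(t)=U(t)+O(\hbar^{\infty})$ from Duhamel plus the uniform $L^2$-boundedness of $e^{-it\widehat A}$. Since $O(\hbar^{\infty})$ perturbations do not change membership in $J^{-1/2,1/2}$, this proves the lemma.

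To build $V(t)$ I would use the symbolic composition rules of Proposition \ref{prop:SymbCompoModelCase} and the formulas for $\sigma_0(\Pi\widehat Q\Pi)$ and $\sigma_1(\Pi\widehat Q\Pi)_s$ in Proposition \ref{Projector} to convert $\partial_t V+i\widehat A V\sim 0$ into transport equations on the two symbols. At the principal level
\[
\partial_t\sigma_0(V)(\bar x,\bar x)=-i\,\chi_{X_c}(\bar x)Q(\bar x)\,\sigma_0(V)(\bar x,\bar x),\qquad
\partial_t\sigma_1(V)_s=-i\,\Pi_{F_s}M_{Q|_{F_s}}\Pi_{F_s}\circ\sigma_1(V)_s,
\]
with initial data $\sigma_0(V(0))=\chi_{X_c}$ and $\sigma_1(V(0))_s=\Pi_{F_s}$. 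The solutions are
\[
\sigma_0(V(t))(\bar x,\bar x)=\chi_{X_c}(\bar x)\,e^{-itQ(\bar x)},\qquad
\sigma_1(V(t))_s=\Pi_{F_s}\,e^{-it\,\Pi_{F_s}M_{Q|_{F_s}}\Pi_{F_s}}.
\]
The compatibility condition of Theorem \ref{Thm:SymbComp} at $\Sigma$ is automatic at leading order, since both restrict to $\chi_{(T^*F_s)^+}\,e^{-itQ}$. I would then iterate in powers of $\hbar$, adjusting the subprincipal symbols so that at each step the residual $\partial_t V+i\widehat A V$ has kernel of progressively smaller order in the $J^{\ell,m}$ bi-filtration, and sum the resulting formal series asymptotically via Borel.

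The main obstacle is verifying that the flow-out ansatz $\sigma_1(V(t))_s=\Pi_{F_s}e^{-it\Pi_{F_s}M_{Q|_{F_s}}\Pi_{F_s}}$ actually lies in the symbol class relevant to $J^{-1/2,1/2}$, i.e., that it is a smooth family in $s$ of classical pseudodifferential operators of order $0$ on the circular fibers $F_s$, with the correct conormal extension at $\Sigma$ matching the Fourier transform of $\sigma_0$. This is the Boutet de Monvel--Guillemin theory of Toeplitz propagators applied fiber-wise: because $\Pi_{F_s}M_{Q|_{F_s}}\Pi_{F_s}$ is a zeroth-order Toeplitz operator on the Hardy space of $F_s\cong S^1$, its Hamilton flow on the positive cone of $T^*F_s\setminus 0$ is trivial, and the propagator is again a zeroth-order Toeplitz operator, smoothly parameterized by $s$ thanks to the smoothness of $Q$ and of the fibration $\pi:\partial X_c\to S$. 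Once this is in hand the ansatz $V(t)$ belongs to $J^{-1/2,1/2}$, the Duhamel estimate closes the argument, and the conclusion follows.
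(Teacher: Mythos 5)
Your overall strategy (parametrix for the operator ODE plus Duhamel, with the flow-out symbol evolving under the fiberwise Toeplitz operator $\Pi_{F_s}M_{Q|_{F_s}}\Pi_{F_s}$ and final answer $\Pi_{F_s}e^{-it\Pi_{F_s}M_{Q|_{F_s}}\Pi_{F_s}}$) is sound and ends at the same symbols as the paper's; but the route is genuinely different, and the difference matters for how much calculus you need. The paper does not posit a two-symbol ansatz: it takes the concrete first approximation $\widetilde W_0=\Pi e^{-it\widehat Q}$, so that the diagonal part of the equation is solved \emph{exactly} and the entire residual is $-\Pi[\Pi,\widehat Q]e^{-it\widehat Q}$, which by Proposition \ref{Prop:CommutatorPiQ} lies in $\text{sc-}I^{-1/2}(M\times M;\F)$, i.e.\ is a Fourier integral operator supported on the flow-out alone. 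From then on all corrections live in the ordinary classes $\text{sc-}I^{\ast}(M\times M;\F)$ (cf.\ Proposition \ref{IntersJs}), their symbols are identified with families of smoothing operators on the fibers, and each step is an explicit operator-valued ODE solved by an integral formula; only principal-symbol information about $J^{\ell,m}$ is ever used. Your scheme instead iterates inside the singular bi-filtered class $J^{\ell,m}$ itself: after matching $\sigma_0$ and $\sigma_1$ you must know that the residual drops in the bi-filtration and that transport equations can be solved at every subleading order, with compatibility at $\Sigma$ maintained at each step. The paper only establishes the composition calculus at the level of principal symbols (Propositions \ref{CompoModelCase}, \ref{prop:SymbCompoModelCase}) and the exact sequence for $\sigma_0$ (Proposition \ref{ExactSeqModelCase}); the full lower-order calculus your iteration implicitly invokes is available in the Guillemin--Uhlmann/Antoniano--Uhlmann framework but is a substantially heavier input, and as written this is the thin point of your argument. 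If you replace your symbolic first step by $\Pi e^{-it\widehat Q}$ (note $e^{-it\widehat Q}$, with no $\h^{-1}$, is itself a zeroth-order semiclassical $\Psi$DO, so this operator is already in $J^{-1/2,1/2}$) and appeal to the commutator result, your remaining corrections collapse to exactly the paper's flow-out iteration and the Duhamel step closes as you describe. The rest of your proposal --- self-adjointness of $\Pi\widehat Q\Pi$ giving unitarity for Duhamel, the Toeplitz-propagator argument on the circle fibers, and the leading-order compatibility at $\Sigma$ --- is correct.
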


\begin{proof}
Let us define
\[
W(t)=\Pi e^{-i t\Pi \widehat Q \Pi}.
\]
It is the solution of the problem
\begin{equation}
\label{Eq:Construction}
\left\{
\begin{array}{ll}
\frac{1}{i}\frac{\partial}{\partial t} W(t)+\Pi \widehat Q \Pi W(t)=0 \\ \\
W(t)_{\big |_{t=0}}=\Pi
\end{array}
\right.
\end{equation}
The idea in the following proof is to construct a solution which will be in the algebra and will make the right-hand side of first 
equation \eqref{Eq:Construction} of order $O(\h^\infty)$. 

As a first approximation we take
\[
\widetilde W_0= \Pi e^{-it \widehat Q},
\]
which satisfies
\[
\frac{1}{i}\frac{\partial}{\partial t}\widetilde W_0+\Pi \widehat{Q} \Pi~ \widetilde W_0=-\Pi \left[ \Pi, \widehat{Q} \right] e^{-it \widehat{Q}}.
\]
We will prove below that $\left[ \Pi, \widehat{Q}\right] \in \text{sc-}I^{-1/2}(M \times M;\mathcal{F}\partial X_c)$ 
(see Section \ref{sec:TheCommutator}).  Therefore, we obtain
\[
\left\{
\begin{array}{ll}
\frac{1}{i}\frac{\partial}{\partial t} \widetilde W_0(t)+
\Pi \widehat Q \Pi \widetilde W_0(t)=: \widetilde R_0(t)\in \text{sc-}I^{-1/2}\left(M\times M; \mathcal{F}\partial X_c  \right) \\ \\
{\widetilde W_0(t)}_{\big |_{t=0}}=\Pi
\end{array}
\right.
\]
We will now modify $W_0$ so as to make the right hand side $O(\h^\infty)$ instead of an operator in
$\text{sc-}I^{-1/2}(M\times M; \mathcal{F}\partial X_c)$. 
For the rest of the proof, it will be convenient to identify symbols in the flow-out with corresponding families of smoothing operators acting on 
functions on the fibers $F_s$ for each $s\in S$. The symbol $R_0(t)$ has a corresponding family operator 
$\{\widetilde{ \mathcal R}_{0,s}(t)\}_{s\in S}$. 
Let us consider the following problem,
\[
\left\{
\begin{array}{lll}
\frac{1}{i}\frac{\partial}{\partial t} \widetilde{\mathcal V}_{0,s}+\Pi_{F_s} M_{Q_{|_{F_s}}} \Pi_{F_s} \circ \widetilde{\mathcal V}_{0,s}=
-\widetilde{ \mathcal R}_{0,s}, \\ \\
{\widetilde{\mathcal V}}_{0,s_{\big |_{t=0}} }=0,
\end{array}
\right.
\]
whose solution is
\[
\widetilde{\mathcal V}_{0,s}(t)=-i\int_0^t e^{i(\tilde t-t) \Pi_{F_s} M_{Q_{|_{F_s}}} \Pi_{F_s}} \widetilde{\mathcal R}_{0,s}(\tilde t) d\tilde t.
\]
Notice that $\widetilde{\mathcal V}_{0,s}$ is a smoothing operator. 
Let us call $V_0$ the smooth symbol in $\F$ given by the operator 
$\widetilde{\mathcal V}_{0,s}$. Take $\widetilde{V_0}\in \text{sc-}I^{-1/2}(M\times M;\mathcal{F}\partial X_c)$ with symbol $V_0$. 
By construction
\[
\dfrac{1}{i}\dfrac{\partial}{\partial t}\left( \widetilde{W_0} +\widetilde{V}_0\right)+\Pi \widehat Q  \Pi \left( \widetilde{W_0}+\widetilde{V_0} \right) = \h \widetilde R_1(t) \in \text{sc-} I^{-3/2}\left( M\times M; \mathcal{F}\partial X_c \right).
\]
Proceeding inductively one can find a sequence of operators $\tilde V_j$ such that for all $J$
\begin{equation}
\label{eq:SolFlowOut}
\begin{aligned}
\dfrac{1}{i}\dfrac{\partial}{\partial t}\left( \widetilde{W_0} +
\sum_{j=0}^J \h^{j}~ \widetilde V_j \right)+\Pi \widehat Q  \Pi \left( \widetilde{W_0}+\sum_{j=0}^J \h^{j} ~\widetilde{V_j} \right)  \\
= \h^{J+1} \widetilde R_{J+1} \in \text{sc-}I^{-3/2-J}\left( M\times M; \mathcal{F}\partial X_c \right)
\end{aligned}
\end{equation}

Finally, take an operator $\widetilde V\in \text{sc-}I^{-1/2}(M\times M;\mathcal{F}\partial X_c)$ 
such that $\widetilde V\sim \sum_{j=0}^{\infty}  \widetilde V_{j}$,
and define $\widetilde W = \widetilde{W}_0+\widetilde V$.  Then
\[
\frac{1}{i}\frac{\partial}{\partial t}\widetilde{W}+\Pi \widehat Q \Pi \widetilde{W}=O(\h^\infty).
\]
 A standard application of Duhamel's principle finishes the proof.
\end{proof}

\begin{proposition}
\label{Cor_fPiQPi}
Let $\widehat Q$ be a self-adjoint semiclassical pseudodifferential operator.  Then
for any smooth function $f$, $\Pi\,f(\Pi \widehat Q \Pi)$, is in the class 
$ J^{-1/2,1/2}(M \times M~;~ \Delta, \mathcal{F}\partial X_c)$. The symbols, ignoring Maslov factors, are as follows:
\begin{equation}\label{sigma_0_cor}
\begin{array}{llllll}
\sigma_0 \left( \Pi\,f(\Pi \widehat Q \Pi) \right)(\x,\x)=\chi_{X_c}(\x) f \left(  Q(\x) \right)~\sqrt{dx \wedge dp}, \text{ and}
\end{array}
\end{equation}
\begin{equation}\label{sigmaOne_cor}
\begin{array}{llllll}
\sigma_{1} \left(\Pi\, f (\Pi \widehat Q \Pi) \right)_{\big |_{F_s}}= \Pi_{F_s}\,f\left( \Pi_{F_s} M_{Q_{ |_{F_s}}} \Pi_{F_s} \right),
\end{array}
\end{equation}
where $F_s$,$Q_{|_{F_s}}$, $M_{Q_{ |_{F_s}}}$, and $\Pi_{F_s}$ are as in Proposition \ref{Projector}
\end{proposition}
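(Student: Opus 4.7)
The plan is to reduce the statement to the exponential case settled in Lemma~\ref{Lemma_ExpAlgebra} via Fourier inversion in the spectral parameter. Since $\widehat Q$ has compact microsupport, $\Pi\widehat Q\Pi$ is bounded and self-adjoint, so after multiplying $f$ by a cutoff outside a neighborhood of the spectrum we may assume $f\in\calS(\bbR)$. Then the self-adjoint functional calculus gives
\[
\Pi\,f(\Pi\widehat Q\Pi) \;=\; \frac{1}{2\pi}\int_\bbR \hat f(t)\, U(t)\, dt, \qquad U(t):=\Pi\,e^{-it\Pi\widehat Q\Pi}.
\]
By Lemma~\ref{Lemma_ExpAlgebra}, $U(t)\in J^{-1/2,1/2}$ for each $t$. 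What remains is to show that the Schwartz-weighted integral remains in the class and that the symbol maps pass inside the integral.

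For closure under the integral, I would revisit the inductive construction in the proof of Lemma~\ref{Lemma_ExpAlgebra} and verify that each amplitude built there is smooth in $t$ with derivatives bounded, in every symbol seminorm of $J^{-1/2,1/2}$, by a polynomial in $t$. The principal symbols are read off by solving the transport equations identified in that proof, with initial data supplied by Proposition~\ref{Projector} at $t=0$:
\[
\sigma_0(U(t))(\x,\x) \;=\; \chi_{X_c}(\x)\,e^{-itQ(\x)}, \qquad \sigma_1(U(t))_s \;=\; \Pi_{F_s}\,e^{-it\,\Pi_{F_s}M_{Q|_{F_s}}\Pi_{F_s}}.
\]
Boundedness of $Q$ on the microsupport and of the Toeplitz operator $\Pi_{F_s}M_{Q|_{F_s}}\Pi_{F_s}$ on $L^2(F_s)$ makes these leading symbols uniformly bounded in $t$, while the corrections $\hbar^j\widetilde V_j$ constructed inductively in Lemma~\ref{Lemma_ExpAlgebra} satisfy analogous transport equations driven by bounded coefficients, hence polynomial-in-$t$ symbolic bounds. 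Rapid decay of $\hat f$ then yields absolute convergence of the integral in every seminorm, and the result belongs to $J^{-1/2,1/2}$.

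The symbol formulas \eqref{sigma_0_cor} and \eqref{sigmaOne_cor} follow by interchanging the integral with the symbol maps: scalar Fourier inversion turns $\chi_{X_c}(\x)e^{-itQ(\x)}$ into $\chi_{X_c}(\x)f(Q(\x))$, while operator-valued Fourier inversion on each fiber $F_s$ turns $\Pi_{F_s}e^{-it\Pi_{F_s}M_{Q|_{F_s}}\Pi_{F_s}}$ into $\Pi_{F_s}f(\Pi_{F_s}M_{Q|_{F_s}}\Pi_{F_s})$. The main obstacle in the program is the uniform symbolic control of $U(t)$ as $|t|\to\infty$: one must verify that the conormal structure at $\Sigma$ (as in Proposition~\ref{prop:Symb1PsiDO}) and the order $m'=0$ of the family $\{\sigma_1(U(t))_s\}_{s\in S}$ are preserved along the evolution, with no worse than polynomial growth of the defining seminorms. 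Once this uniformity is in hand, the extension from exponentials to arbitrary smooth $f$ is purely formal, and the computation of the two symbols reduces to integration under the scalar and operator-valued Fourier transforms displayed above.
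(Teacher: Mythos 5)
Your proposal is correct and follows essentially the same route as the paper: write $\Pi\,f(\Pi\widehat Q\Pi)$ via Fourier inversion as an integral of $\Pi\,e^{-it\Pi\widehat Q\Pi}\,\check f(t)\,dt$, invoke Lemma~\ref{Lemma_ExpAlgebra}, and pass the symbol maps through the integral. You are somewhat more careful than the paper in flagging the need for uniform (polynomial in $t$) control of the symbol seminorms of $U(t)$ to justify that the integral stays in the class; the paper takes this for granted.
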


\begin{proof}
We have:
\begin{equation}
\label{eq:fPiQPi}
\Pi\,f\left( \Pi \widehat Q \Pi \right)=\frac{1}{\sqrt{2\pi}} \int \Pi\, e^{-it \Pi \widehat Q \Pi} \check{f}(t)dt,
\end{equation}
where $\check{f}(t)=\frac{1}{\sqrt{2\pi}}\int e^{is}f(s)ds$. 
By the previous lemma we can conclude that $\Pi f(\Pi \widehat Q \Pi) \in J^{-1/2,1/2}$.  Moreover
\[
\sigma_0(\Pi f(\Pi \widehat Q \Pi))(\x, \x)=\frac{1}{\sqrt{2\pi}}\int e^{-it Q(\x)}\chi_{X_c}(\x) \check{f}(t)dt=f(Q(\x))~\chi_{X_c}(\x),
\]
and
\[
\sigma_1\left(\Pi f(\Pi \widehat Q \Pi) \right)_s=\frac{1}{\sqrt{2\pi}}\int \Pi_{F_s} e^{-i t \Pi_{F_s} M_{Q_{F_s}} \Pi_{F_s}}\check{f}(t)dt=\Pi_{F_s} f(\Pi_{F_s} M_{Q_{F_s}} \Pi_{F_s} ) .
\]
\end{proof}

\medskip
As an immediate corollary of Theorems \ref{Trace1} and \ref{Projector}, we obtain the following Szeg\"o limit theorem:
\begin{corollary}\label{cor:SzgoGeneral}
Assume that $X_c$ is compact.   Then for any smooth function $f$
\[
\tr\left( \Pi_N f (\Pi_N\widehat{Q}_N \Pi_N) \right) = (2\pi)^{{-n}}\ N^{n}\,\int_{X_c} f\circ Q\ \frac{\omega^n}{n!} + O(N^{n-1}\log(N)).
\]
\end{corollary}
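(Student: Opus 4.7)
The plan is to combine Proposition \ref{Cor_fPiQPi} with the trace asymptotic in Theorem \ref{Trace1} applied at the critical endpoint $m=1/2$ (i.e.\ $m'=0$), which is precisely the case that produces the logarithmic error.

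First I would verify that $\Pi_N f(\Pi_N\widehat{Q}_N\Pi_N)$ actually lies in the algebra $\mathcal{A}_{X_c}$ rather than just in $J^{-1/2,1/2}(M\times M;\Delta,\mathcal{F}\partial X_c)$. By Proposition \ref{Cor_fPiQPi} the operator belongs to $J^{-1/2,1/2}$ with diagonal symbol $\sigma_0 = \chi_{X_c}(\x)\,f(Q(\x))$, which is supported in $X_c$; this, together with the fact that $\Pi_N$ is itself microlocally $O(\h^\infty)$ on the complement of $X_c$ (by the construction of $\Pi$ in \S\ref{ProjectorExistence}, which sandwiches the whole expression between projectors), gives microlocal smallness outside $X_c$. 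Hence the operator belongs to $\mathcal{A}_{X_c}$ and Theorem \ref{Trace1} applies.

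Next I would plug $\ell=-1/2$, $m=1/2$ (so $m'=m-1/2=0$) into the second line of Theorem \ref{Trace1}. Since $\h=1/N$, the prefactor $(2\pi)^{-n}\h^{-\ell-m-n}=(2\pi)^{-n}N^{n}$ is exactly the one claimed, and the error $O(\h^{-\ell-m-n}\,\h\log(1/\h))=O(N^{n-1}\log N)$ matches. Substituting the diagonal symbol from \eqref{sigma_0_cor} gives
\[
\tr\bigl(\Pi_N f(\Pi_N\widehat{Q}_N\Pi_N)\bigr)=(2\pi)^{-n}N^{n}\int_{T^{*}M}\chi_{X_c}(\x)\,f(Q(\x))\,dx\,dp+O(N^{n-1}\log N).
\]
Since the Liouville measure in canonical coordinates equals $\omega^n/n!=dx\,dp$ and $\chi_{X_c}$ restricts the domain of integration to $X_c$, this is exactly the stated asymptotic.

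The only nontrivial step is the verification that the hypotheses of Theorem \ref{Trace1} hold, in particular that $f(\Pi_N\widehat{Q}_N\Pi_N)$ really produces an element of $J^{-1/2,1/2}$ with the claimed symbol, but this is precisely the content of Proposition \ref{Cor_fPiQPi} (which in turn rests on Lemma \ref{Lemma_ExpAlgebra} and the Fourier representation \eqref{eq:fPiQPi}). Beyond that, the derivation is purely a matter of matching indices and extracting the principal term of the trace expansion; no further analytic work is required.
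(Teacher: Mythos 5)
Your proof is correct and takes essentially the same route the paper does: combine Proposition \ref{Cor_fPiQPi} (which places $\Pi\,f(\Pi\widehat{Q}\Pi)$ in $J^{-1/2,1/2}$ with diagonal symbol $\chi_{X_c}\,f\circ Q$) with Theorem \ref{Trace1} at the endpoint $m'=0$, and then check the powers of $\hbar=1/N$ and the identification of $dx\,dp$ with Liouville measure. Your explicit verification that the operator lies in $\mathcal{A}_{X_c}$ (via the sandwich by $\Pi_N$) is a reasonable and useful gloss on what the paper leaves as an "immediate" step.
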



\subsubsection{Commutators}
\label{sec:TheCommutator}

We now describe another property of the projector. Let $\widehat Q$ be a semiclassical pseudodifferential operator as above. 
The projector $\Pi$ behaves microlocally as the identity on the interior $X_c$, suggesting that $\left[ \Pi, \widehat Q \right]$ is microlocally
 $O(\h^\infty)$ on the diagonal. Using Proposition \ref{IntersJs}, we anticipate that 
 $\left[ \Pi, \widehat Q \right] \in \text{sc-}I^{-1/2}\left( M\times M; \mathcal{F}\partial X_c \right)$.  We now prove that this is indeed
 the case, and compute the principal symbol of the commutator.

\begin{proposition}
\label{Prop:CommutatorPiQ}
For any zeroth order compactly supported semiclassical pseudodifferential operator $\widehat Q$, $\left[ \Pi, \widehat Q \right] \in \text{sc-}I^{-1/2}\left( M\times M; \mathcal{F} \partial X_c \right)$ is a semiclassical Fourier integral operator, with (smooth) principal symbol
\begin{equation}
\label{SymbComm}
\sigma\left( \left[ \Pi, \widehat Q \right] \right)\left( \x=\Phi_s^P \y\ ; \ \y \right)=\left\{
\begin{array}{lll}
\frac{1}{\sqrt{2\pi}}\frac{Q(\y)-Q(\x)}{1-e^{is}}& &\text{ if }\x \neq \y \\ \\
\frac{1}{i \sqrt{2\pi}} \left\{ P, Q\right\}(\x)& & \text{ if }\x=\y.
\end{array}
\right.
\end{equation}
Furthermore, if the principal symbol of $\widehat Q$ is constant along the orbits in the flow-out, the Hamilton flows $\Phi^P$ and $\Phi^Q$ of $P$ and $Q$ commute in $\partial X_c$, and the 
subprincipal symbol of $Q$ vanishes on $\partial X_c$ (Levi condition), then
 $\left[ \Pi, \widehat Q \right] \in I^{-5/2}\left( M\times M ; \mathcal{F}\partial X_c \right)$. 
\end{proposition}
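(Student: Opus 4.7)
The plan is to reduce to the model case (Section \ref{sec:ModelCase}) and realize $[\Pi,\widehat{Q}]$ explicitly as a semiclassical oscillatory integral whose amplitude, after cancelling an apparent pole at $s=0$, has no diagonal component. As preparation, I would first observe that by Theorem \ref{thm:ProjInAlg} and the composition Proposition \ref{prop:SymbCompoModelCase}, both $\Pi\widehat{Q}$ and $\widehat{Q}\Pi$ lie in $J^{-1/2,1/2}(M\times M;\Delta,\F)$, and by Proposition \ref{Projector} their diagonal principal symbols agree, both being $\chi_{X_c}\cdot Q$. Hence $\sigma_0([\Pi,\widehat{Q}])\equiv 0$ on $\Delta\setminus\Sigma$, which already suggests that the commutator has no genuine diagonal contribution and is captured entirely by a symbol on the flow-out.

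Next, using the $S^1$-equivariant FIO reduction from the proof of Theorem \ref{thm:ProjInAlg}, I would pass to the model $\mathbb{R}^{n-1}\times S^1$ with $\widehat{P}^n=\hbar D_\theta$ and write
\[
[\Pi^E,\widehat{Q}]=\frac{1}{2\pi}\int_0^{2\pi}e^{-iNs\widehat{P}^n}\widehat{R}_s\,e^{iNsE}\,\frac{ds}{1-e^{is}},
\]
where $\widehat{R}_s:=\widehat{Q}-e^{iNs\widehat{P}^n}\widehat{Q}e^{-iNs\widehat{P}^n}$, so that $\widehat{R}_0=0$. By Egorov's theorem $\widehat{R}_s$ is a smooth $s$-family of zeroth-order semiclassical pseudodifferential operators with principal symbol $Q-Q\circ\Phi^{P}_{-s}$, and the integral formula
\[
\widehat{R}_s=-iN\int_0^s e^{i\sigma N\widehat{P}^n}[\widehat{P}^n,\widehat{Q}]e^{-i\sigma N\widehat{P}^n}\,d\sigma,
\]
combined with $[\widehat{P}^n,\widehat{Q}]=O(\hbar)$, shows that the full amplitude of $\widehat{R}_s$ vanishes to first order in $s$ at $s=0$ uniformly in $\hbar$. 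The simple pole of $1/(1-e^{is})$ is therefore cancelled, so $\widetilde{R}_s:=\widehat{R}_s/(1-e^{is})$ is a smooth family of zeroth-order pseudodifferential operators; the resulting integral $\frac{1}{2\pi}\int_0^{2\pi}e^{-iNs\widehat{P}^n}\widetilde{R}_s\,e^{iNsE}\,ds$ is then visibly a semiclassical FIO associated to the flow-out of $P^n$, of order $-1/2$, as claimed.

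The principal symbol at a flow-out point where $\x$ and $\y$ lie on a common orbit of $\Phi^P$ is then read off from the amplitude at its stationary point, giving the asserted formula; Taylor-expanding $Q\circ\Phi^P_s$ and $1-e^{is}$ at $s=0$ shows that the zero of $Q-Q\circ\Phi^P_s$ cancels the pole and produces the smooth limit $\frac{1}{i\sqrt{2\pi}}\{P,Q\}(\x)$ on $\Sigma$, recovering \eqref{SymbComm} after transporting back to $M$.

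For the Levi-type improvement, if $Q$ is constant on orbits of $\Phi^P$ in $\partial X_c$ then $Q(\y)-Q(\x)$ vanishes on $\F$, killing the principal symbol \eqref{SymbComm} and lowering the order to $-3/2$; the commutation of $\Phi^P$ and $\Phi^Q$ on $\partial X_c$, together with the vanishing of the subprincipal symbol of $\widehat{Q}$ on $\partial X_c$, then forces the next term in the Egorov expansion of $\widehat{R}_s$ to vanish on $\F$ as well, dropping the order once more to $-5/2$. The most delicate step I anticipate is verifying that the pole cancellation of $1/(1-e^{is})$ propagates to every order of the $\hbar$-expansion of the amplitude, not merely at the leading symbol level; this is precisely what the integral representation of $\widehat{R}_s$ above achieves, since iterated application of the same identity produces the factor of $s$ symbolically at each order in $\hbar$.
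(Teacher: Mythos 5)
Your proposal is correct, but it takes a genuinely different route from the paper's, in both halves. For the membership in $\text{sc-}I^{-1/2}(M\times M;\F)$ and the formula \eqref{SymbComm}, the paper also reduces to the model $\bbR^{n-1}\times S^1$ via the equivariant FIO, but then decomposes $\widehat Q=\sum_k\widehat Q_k$ into Fourier modes in $\theta$ and computes the Schwartz kernel of $[\Pi_N,\widehat Q_k]$ by hand: the amplitude is the finite sum $\sum_{0<j\le k}e^{-ij(\theta-\alpha)}e^{ik\theta}q_k(x,p,E-j/N)$, manifestly a semiclassical symbol, and summing the geometric series yields \eqref{SymbComm}. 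Your device --- writing $[\Pi^E,\widehat Q]=\frac{1}{2\pi}\int_0^{2\pi}e^{-iNs\widehat P^n}\widehat R_s\,e^{iNsE}\,\frac{ds}{1-e^{is}}$ and cancelling the pole at the operator level through the Duhamel identity for $\widehat R_s$ --- reaches the same conclusion more structurally, and in the model it is especially clean because conjugation by $e^{-isD_\theta}$ acts on the \emph{full} symbol by exact translation in $\theta$, so there are no Egorov remainders to control; note also that the pole at $s\equiv 2\pi$ is harmless since $e^{2\pi iD_\theta}=I$ gives $\widehat R_{2\pi}=0$. For the improvement to $I^{-5/2}$ the divergence is sharper: the paper never computes subleading terms of the commutator directly, but localizes with cutoffs $\widehat T_1,\widehat T_2$ of disjoint microsupport, uses the identity $\widehat T_1[\Pi,\widehat Q]\widehat T_2=[\widehat T_1\Pi\widehat T_2,\widehat Q]+\widehat T_1\Pi[\widehat Q,\widehat T_2]+[\widehat Q,\widehat T_1]\Pi\widehat T_2$ together with Proposition \ref{prop:QVComm} (the $-3/2$ symbol of $[\widehat Q,\widetilde V]$ is $\frac{\h}{i}$ times the diagonal Lie derivative of the symbol of $\widetilde V$), and concludes that the $-3/2$ symbol vanishes off $\Sigma$, hence everywhere by continuity. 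You instead propose to push the amplitude analysis of $\widehat R_s$ one order further; that also works, and is arguably more self-contained.

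The one thin spot is exactly where the paper's Proposition \ref{prop:QVComm} does its work, and you assert it rather than prove it. After the first drop to $I^{-3/2}$, the $-3/2$ symbol of your oscillatory integral is \emph{not} simply the restriction to $\{\tau=E\}$ of the $\h$-term of the amplitude of $\widetilde R_s$: a factor of $(\tau-E)$ in the leading amplitude converts, upon integration by parts in $s$, into $\h$ times an $s$-derivative, so the $-3/2$ symbol mixes the transversal derivative of the leading amplitude with the subleading amplitude. Your hypotheses do kill both pieces: by Lemma \ref{Lemma:QHypothesis} the commutation of the flows gives $\partial_\theta q_0=(\tau-E)^2F$, so the leading amplitude of $\widehat R_s$ vanishes to \emph{second} order at $\tau=E$ (already enough for that piece to land in $I^{-5/2}$), and the Levi condition, written as $q_1=\frac{1}{2i}\sum_j\partial^2_{x_jp_j}q_0$ on $\{\tau=E\}$, forces $\partial_\theta q_1|_{\tau=E}=0$ because $\partial_\theta$ passes onto $\partial_\theta q_0=(\tau-E)^2F$ in every term. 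But ``the next term in the Egorov expansion vanishes on $\F$'' is the conclusion of this computation, not something visible by inspection; supply it (or quote the paper's localization argument) and your proof is complete.
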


\begin{remark}
The Hamilton flow of the principal symbol of an operator that commutes with $\Pi$ preserves the region $X_c$. 
However, the converse is not true; the invariance of the region $X_c$ is a much weaker condition than the commuting property. 
\end{remark}

\begin{proof}
It is enough to prove it in the model case $\bbR^{n-1} \times S^1$ with coordinates $(x,\theta)$ and $T^*(\bbR^{n-1}\times S^1)$ with coordinates $(x,\theta \, ; \, p,\tau)$. We only consider one energy level, say $E$ so that $X_c=\left\{ \tau \ge E\right\}$.  For simplicity, let us consider zeroth order semiclassical pseudodifferential operators of the form:
\[
\widehat Q(\h) f(x,\theta)=\sum_m e^{im \theta}\int q(x,p,\theta,\h m) e^{ixp/\h}\hat f(p,m,\h)dp,\text{ where}
\]
\[
\hat f(p,m,\h)=\frac{1}{(2\pi \h)^{n+1}} \int e^{-iyp/\h} e^{-im\alpha} f(y,\alpha)dyd\alpha,
\]
and $q(x,p,\theta,s)$ is the full symbol.  Decompose $q$ in its Fourier modes,
$
q_k(x,p,\theta,s)=\sum e^{ik\theta} q_k(x,p,s).
$
Then $\widehat Q=\sum \widehat Q_k$, where 
\begin{equation}
\label{Qk}
\widehat Q_k(\h) f=\sum_m e^{im\theta} \int e^{ik\theta}q_k(x,p,\h m)e^{ixp/\h}\hat{f}(p,m,\h)dp
\end{equation}
is a $\text{sc-}\Psi DO$ with symbol $q_k(x,p,s)$.  
The kernel of $Q_k$ is
\[
K_{Q_k}(x,y,\theta,\alpha)=\sum_m e^{i(k+m)\theta-im\alpha}p_k(x,y,m,\h),
\]
where
\[
p_k(x,y,m,\h)=\frac{1}{(2\pi \h)^{n+1}}\int e^{i(x-y)p/\h}q_k(x,p,\h m) dp.
\]
A calculation shows that for $k>0$,
\[
K_{[\Pi_N,\widehat Q_k(1/N)]}(x,\theta,y,\alpha)
\]
\[
=\frac{N^{n+1}}{(2\pi)^{n+1}}\int e^{iN (x-y)p}e^{iN E (\theta-\alpha)} 
\left[ \sum_{0<j\le k}e^{-i j (\theta-\alpha)}e^{ik \theta}~q_k(x,p,E-j/N)\right] dp .
\]
Notice that the amplitude in the integral above (in brackets) has an expansion in powers of $\h$, and in fact is a semiclassical symbol. 
The phase parametrizes the flow-out of $\left\{ \tau=E \right\}$ by the canonical $S^1$ action, which is $\F$. 
This proves that the commutator is in the corresponding class, and the principal symbol is
\[
\frac{1}{\sqrt{2\pi}} \sum_{0<j\le k}e^{-i j(\theta-\alpha)}e^{i k\theta}q_k(x,p,E)=\frac{q_k(x,p,E)e^{ik\alpha}-q_k(x,p,E)e^{ik\theta}}{1-e^{-i(\alpha-\theta)}}.
\]
The case $k<0$ is similar, and taking the sum over $k$, we obtain \eqref{SymbComm}.

For the last part, assume that the principal symbol is constant in the fibers of $\partial X_c\to S$, which implies that 
$\left[ \Pi, \widehat Q \right] \in \text{sc-}I^{-3/2}\left( M\times M; \F \right)$. 
Assuming the Levi condition, we will show next that the principal symbol (corresponding to the degree $-3/2$) vanishes again. 
Take $\x_0, \y_0\in \F$, $\x_0 \neq \y_0$. 
Consider two zeroth order semiclassical pseudodifferential operators 
$\widehat T_1$, $\widehat T_2$ of disjoint compact microsupport, such that their principal symbol is $1$ in a 
neighborhood $\x$, $\y$ respectively. 
Notice that
\begin{equation}
\label{eq:LocComm}
\widehat T_1 \left[ \Pi, \widehat Q \right] \widehat T_2 = \left[ \widehat T_1 \Pi \widehat T_2, \widehat Q \right] +
\widehat T_1 \Pi \left[ \widehat Q, \widehat T_2 \right]+\left[ \widehat Q, \widehat T_1 \right] \Pi \widehat T_2.
\end{equation}
Near $(\x,\y)$, the symbol of $\widehat T_1 \left[ \Pi, \widehat Q \right] \widehat T_2$ and $\left[ \Pi, \widehat Q \right]$ coincide. 
Consider, on the other hand, the first term on the right-hand side of equation \eqref{eq:LocComm}.  First, by the assumption on the
microsupports of $T_1$ and $T_2$, the operator $ \widehat T_1 \Pi \widehat T_2$ does not have wave-front set along the diagonal,
and therefore it is in $\text{sc-}I^{-1/2}\left( M\times M; \F \right)$.  We can then apply Proposition \ref{prop:QVComm} below
to compute the symbol of the commutator $\left[ \widehat T_1 \Pi \widehat T_2, \widehat Q \right]$.
Near $(\x_0,\y_0)$ the symbol $ \widehat T_1 \Pi \widehat T_2$ is equal to the symbol of $\Pi$, and
clearly the (diagonal) Lie derivative of this symbol with respect to the Hamilton flow of $Q$ is zero.   Therefore
the symbol of this commutator (as an operator of order $-3/2$) is zero.

The principal symbols of the last two terms in \eqref{eq:LocComm}
also vanish because the principal symbols of $T_1,T_2$ are constant near $\x_0,\y_0$, respectively. 
Therefore, the $(-3/2)$ principal symbol of $\left[ \Pi, \widehat Q \right]$ vanishes off the diagonal, and therefore 
everywhere on $\F$ by continuity.
This concludes the proof.
\end{proof}


\section{On some propagators $e^{-it\h^{-1} \Pi \widehat Q \Pi}$}
\label{sec:Propagator}

\subsection{The classical counterpart}

 We begin by considering classical hamiltonians $Q:T^*M\to\bbR$ with the property that their Hamilton field is tangent to $\partial X_c$,
 that is,   $\Xi_Q (\x) \in T_{\x} \partial X_c$ for all $\x\in \partial X_c$.
 It is easy to see that this occurs if and only if the Poisson bracket satisfies $\{P,Q\}_{|_{X_c}}=0$,  and a as consequence, $Q$ is constant on the fibers of $\partial X_c\to S$.
The Hamilton flow of such a $Q$ preserves the region $X_c$, that is, it defines a classical flow in the symplectic
 manifold with boundary $X_c$.   The restriction of $Q$ to $\partial X_c$ descends to a smooth function $Q_S:S\to\bbR$, which in 
 turn defines a Hamilton flow on $S$.  However,
 the following diagram (where $\Phi^Q_t$ ~denotes the Hamilton flow of $Q$, etc.) does {\em not} commute in general:
 \begin{equation}\label{diagramita}
 \begin{array}{ccc}
 \partial X_c & \overset{\Phi^Q_t|_{\partial X_c}}\longrightarrow & \partial X_c\\
 \downarrow & & \downarrow\\
  S& \overset{\Phi^{Q_S}_t}  \longrightarrow & S
 \end{array}
 \end{equation}

\begin{lemma}
\label{Lemma:QHypothesis}
Assume that $\Xi_Q$ is tangent to $\partial X_c$, and let $P$ be a defining function of $X_c$, as above. 
Then the diagram \eqref{diagramita} commutes for all $t$ if and only if 
the Poisson bracket of $P$ and $Q$ vanishes to second order at the boundary $\partial X_c$, 
by which we mean that there exists a smooth function $F$ such that
\[
 \{P,Q\} = P^{2}F.
\]

\end{lemma}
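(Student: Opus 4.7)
My plan is to translate the commutativity of the diagram \eqref{diagramita} into a Lie-bracket identity on $\partial X_c$ and then evaluate that bracket via the Poisson structure; both sides of the claimed equivalence are local along $\partial X_c$, so this is the natural route.

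The first step is to observe that the commutativity of \eqref{diagramita} for all $t$ amounts, under the tangency hypothesis, to $\Phi^Q_t$ descending to $\Phi^{Q_S}_t$ in an $S^1$-equivariant way along the principal $S^1$-bundle $\pi\colon\partial X_c\to S$ --- equivalently, to the commutation $\Phi^Q_t\circ\Phi^P_s=\Phi^P_s\circ\Phi^Q_t$ on $\partial X_c$. Differentiating that identity at $s=t=0$ gives the infinitesimal condition
\[
[\Xi_P,\Xi_Q]\bigr|_{\partial X_c}=0,
\]
and conversely, since both $\Xi_P$ and $\Xi_Q$ are tangent to $\partial X_c$ (the latter by hypothesis), the bracket restricts to a well-defined vector field on $\partial X_c$ whose vanishing, by the usual Lie correspondence, implies that the two restricted flows commute.

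The remaining step is a Poisson-bracket calculation. Using $[\Xi_P,\Xi_Q]=\Xi_{\{P,Q\}}$ and the tangency hypothesis $\{P,Q\}|_{\partial X_c}=0$, Hadamard's lemma produces a smooth factorization $\{P,Q\}=P\,G$ in a neighborhood of $\partial X_c$. The Leibniz identity
\[
\Xi_{PG}=P\,\Xi_G+G\,\Xi_P
\]
restricts on $\partial X_c=\{P=0\}$ to
\[
\Xi_{\{P,Q\}}\bigr|_{\partial X_c}=G\bigr|_{\partial X_c}\,\Xi_P\bigr|_{\partial X_c}.
\]
Because $0$ is a regular value of $P$, $\Xi_P$ is nowhere zero on $\partial X_c$, so the right-hand side vanishes if and only if $G|_{\partial X_c}=0$; a second application of Hadamard's lemma converts this into the existence of a smooth $F$ with $G=PF$, i.e., $\{P,Q\}=P^2F$, as required.

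I expect the main obstacle to be the first step --- the precise identification of the ``diagram commutes'' condition with $[\Xi_P,\Xi_Q]|_{\partial X_c}=0$. The naive projection identity $\pi\circ\Phi^Q_t=\Phi^{Q_S}_t\circ\pi$ follows directly from $d\pi(\Xi_Q)=\Xi_{Q_S}$, so the real geometric content of \eqref{diagramita} is the $S^1$-equivariance of the lift of $\Phi^{Q_S}_t$ to $\partial X_c$ --- precisely what $\{P,Q\}=P^2F$ encodes infinitesimally, and exactly the Levi-type hypothesis that reappears in Proposition~\ref{Prop:CommutatorPiQ}. Once that identification is in hand, the Poisson-bracket computation is essentially a one-line application of Hadamard's lemma and the Leibniz rule for Hamiltonian vector fields.
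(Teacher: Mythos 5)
Your proof is correct and follows essentially the same route as the paper: reduce commutativity of the diagram to commutation of $\Phi^P_s$ and $\Phi^Q_t$ on $\partial X_c$, i.e.\ $[\Xi_P,\Xi_Q]|_{\partial X_c}=0$; write $\{P,Q\}=PG$ by Hadamard (the paper's $F_0P$); compute $\Xi_{\{P,Q\}}|_{\partial X_c}=G|_{\partial X_c}\Xi_P|_{\partial X_c}$; and use $\Xi_P\neq 0$ to conclude $G|_{\partial X_c}=0$, i.e.\ $\{P,Q\}=P^2F$. Your closing observation that the ``naive'' square $\pi\circ\Phi^Q_t=\Phi^{Q_S}_t\circ\pi$ already holds by $\pi$-relatedness, so the lemma's real content is the $S^1$-equivariance (commutation with $\Phi^P_s$), is a sharper reading of the statement than the paper itself offers, but the actual argument is the same.
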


\begin{proof}
Since $\Xi_Q(\x) \in T_{\x}\partial X_c$ for all $\x \in \partial X_c=P^{-1}(0)$, then
$\left\{ Q,P \right\}(\x)=dP_{\x}(\Xi_Q)=0\text{ for all } \x \in \partial X_c$.
Therefore we can write $\{P,Q\}=F_{0} P$ for some smooth function $F_{0}$, and
\[
[\Xi_P, \Xi_Q]=\Xi_{\{ P, Q\}}=F_{0} \Xi_P+P \Xi_{F_{0}}
\]
will vanish on $\partial X_c$ if and only if $F_{0}$ itself vanishes on $\partial X_c$. 
As a result, 
\[
\Phi_s^P\circ \Phi_t^Q (\x) =\Phi_t^Q \circ \Phi_s^P (\x)
\]
 $\forall \x\in\partial X_c$ if and only if $\{ P, Q \}$ vanishes to second order on $\partial X_c$.
\end{proof}

\medskip
We now discuss the relation of the above considerations with Lerman's  symplectic cut construction.
The ``cut" space is
\[
Y = X_c/\sim,
\]
where the equivalence relation on $X_c$ is:  $\x\sim\y$ iff $\x$ and $\y$ are on the boundary $\partial X_c$ and in fact on
the same leaf of $\partial X_c\to S$.  There is an obvious inclusion $S\hookrightarrow Y$ and it is clear that, as sets,
\begin{equation}\label{pedazos}
Y = \text{Int }(X_c)\coprod S
\end{equation}
(disjoint union).  Let us give $Y$ the quotient topology.  Then  a function $Q$ satisfying $\{Q, P\}|_{\partial X_c} = 0$
induces a continuous function  $Q_Y: Y\to\bbR$, which is smooth when restricted to each of the pieces in \eqref{pedazos}.
If the Poisson bracket vanishes to second order, then, by the previous lemma, one has a commutative diagram
\begin{equation}\label{diagrama}
\begin{array}{ccc}
X_c & \overset{\Phi^Q_t|_{X_c}}\longrightarrow & X_c\\
 \downarrow & & \downarrow\\
Y& \overset{\Phi^{Q_Y}_t}  \longrightarrow & Y
 \end{array}
\end{equation}
where $\Phi^{Q_Y}$ is a Hamilton flow defined piece-wise by restricting $Q$ to the pieces in \eqref{pedazos}.

In Lerman's construction the topological space $Y$ acquires the structure of a symplectic manifold
of which $S$ is a symplectic submanifold.  
However, in general, $Q_Y$ {\em is not smooth} with respect to Lerman's structure; for this it is necessary that $\{Q, P\}|_{\partial X_c} = 0$ 
to infinite order, as implied by the following lemma:

\begin{lemma}
If $\{P, Q\}|_{\partial X_c} = 0$ to order $k$, meaning that there exists a smooth function $F$ such that
$\{P, Q\} = P^{k}F $,
then $Q_Y\in C^{k-1}(Y)$.
\end{lemma}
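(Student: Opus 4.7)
The plan is to reduce the claim to a local computation near the seam $S \subset Y$ where the two pieces of $Y = \mathrm{Int}(X_c) \coprod S$ meet in Lerman's smooth structure. Away from $S$ the function $Q_Y$ is smooth: on $\mathrm{Int}(X_c)$ because $Q$ itself is smooth, and at each point of $S$ because the hypothesis $\{P, Q\} = P^k F$ with $k \geq 1$ restricts on $\partial X_c$ to $\{P, Q\}|_{\partial X_c} = 0$, so $Q|_{\partial X_c}$ is constant along the null leaves and descends to a smooth $Q_S$ on $S$. The issue is purely the regularity of $Q_Y$ at points of $S$ viewed as sitting inside the full manifold $Y$.

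Near any chosen point of $S$, I would use action--angle-type coordinates $(\theta, P, s, \xi)$ on a tubular neighborhood of $\partial X_c$ in $T^*M$, with $\theta \in S^1$ parametrizing the $2\pi$-periodic $\Phi^P$-orbit, $P$ the defining function, and $(s, \xi)$ local coordinates on $S$. Lerman's symplectic cut provides a chart $(z, s, \xi)$ on $Y$ near $S$ with $S = \{z = 0\}$ and $z = \sqrt{2P}\, e^{i\theta}$, so that $P = |z|^2/2$ and $\theta = \arg z$ on $Y \setminus S$. In these coordinates $\{P, Q\} = -\partial_\theta Q$, hence $\partial_\theta Q = -P^k F$; integrating in $\theta$ and using $2\pi$-periodicity of $Q$ (which forces $F$ to have zero $\theta$-average on each $P\neq 0$ orbit, and hence on $\partial X_c$ by continuity) yields the decomposition
\[
Q(\theta, P, s, \xi) = Q_0(P, s, \xi) + P^k\, \tilde G(\theta, P, s, \xi)
\]
valid on a neighborhood of $\partial X_c$, with $Q_0$ the invariantly defined $\theta$-average of $Q$ and $\tilde G$ smooth and $2\pi$-periodic in $\theta$. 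Pulled back to $Y$ this becomes
\[
Q_Y(z, s, \xi) = Q_0(|z|^2/2, s, \xi) + |z|^{2k}\, H(z, s, \xi),
\]
where $H(z, s, \xi) := 2^{-k}\, \tilde G(\arg z, |z|^2/2, s, \xi)$ for $z \neq 0$ (extended arbitrarily at $z = 0$).

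The first summand is manifestly smooth on $Y$. For the second, the decisive observation is that $|z|^{2k} = (x^2 + y^2)^k$ is a \emph{genuine polynomial} in $(x, y)$ vanishing to order $2k$, while $H$, although discontinuous at $z = 0$, is merely a bounded function of $\arg z$ and $|z|^2/2$, and so its $(x, y)$-derivatives satisfy the crude bound $\partial^\gamma H = O(r^{-|\gamma|})$ as $r = |z| \to 0$ (by the chain rule applied to $\arg z$ and $|z|^2/2$). Leibniz's rule then gives, for any $(x, y)$-multi-index $\alpha$ of length at most $k - 1$,
\[
\partial^\alpha\bigl(|z|^{2k}\, H\bigr) = \sum_{\beta \leq \alpha} \binom{\alpha}{\beta}\, \bigl(\partial^\beta |z|^{2k}\bigr)\bigl(\partial^{\alpha-\beta} H\bigr) = O\bigl(r^{2k - |\alpha|}\bigr),
\]
with exponent $2k - |\alpha| \geq k + 1 > 0$, so every such derivative extends continuously to $z = 0$ with value $0$; this gives $Q_Y \in C^{k-1}(Y)$. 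The only point requiring real care is the matching of the local chart $(z, s, \xi)$ with Lerman's intrinsic smooth structure on $Y$ (standard under the fibrating and Bohr--Sommerfeld hypotheses already in force), together with the check that $Q_0$ and $\tilde G$ are globally defined on a neighborhood of $\partial X_c$ (which is immediate from the intrinsic description of $Q_0$ as an average over leaves). The regularity estimate itself is elementary, and I note that the same argument actually produces continuity of derivatives up to order $2k - 1$, so the bound stated in the lemma is not sharp.
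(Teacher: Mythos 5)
Your proof is correct, and the approach is genuinely different from the paper's. Both proofs pass to the same model ($X_c = \{\tau \geq 0\}$, cut chart $z = \sqrt{\tau}\,e^{i\theta}$), and both start from essentially the same decomposition $Q = Q_0 + P^k \tilde G$ (the paper writes it as $Q = \tau^k G_1 + G_2$), so the reduction to showing that a function of the form $|z|^{2k}\,G(\arg z, |z|^2,\ldots)$ has the claimed regularity is shared. Where you diverge is the mechanism: the paper proves regularity by an induction on $k$, computing $\partial_{u_i}\tilde G^{k}$ explicitly as a combination $2k u_i \tilde G^{k-1} + u_j \widetilde{(\partial_\theta G)}^{k-1} + 2u_i \widetilde{(\partial_\tau G)}^{k}$ and invoking the inductive hypothesis on each summand; you instead prove a Fa\`a-di-Bruno bound $\partial^\gamma G(\arg z, |z|^2,\ldots) = O(r^{-|\gamma|})$ once and for all, combine it with $\partial^\beta |z|^{2k} = O(r^{2k-|\beta|})$ via Leibniz, and read off decay of all derivatives. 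Your route is more elementary (no induction on the decomposition) and, as you observe, it yields the stronger conclusion $C^{2k-1}$; the paper's induction in fact yields $C^k$ even though the lemma is stated with $C^{k-1}$, so both of you improve on the stated bound, with yours being sharper (and, one can check with $G=\cos((2k-1)\theta)$, actually optimal). The one place where you are glossing over a genuine step is the passage from ``$\partial^\alpha\bigl(|z|^{2k}H\bigr)\to 0$ as $z\to 0$'' to ``$|z|^{2k}H \in C^{k-1}$'': the estimates by themselves only show the derivatives \emph{computed on $\{z\neq 0\}$} extend continuously by zero, and one still needs the standard bootstrap (integrate along radial segments to the seam, which avoid $\{z=0\}$ except at the endpoint) to conclude that the extended function is actually differentiable at $\{z=0\}$ and that the extensions are the true derivatives. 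The paper handles this for the base case $k=1$ by an explicit difference-quotient computation at $z=0$; your proof should at least cite the analogous fact. With that one sentence added, the argument is complete.
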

\begin{proof}
Take the model case $M=\bbR^{n-1}\times S^1$ with coordinates $(x,\theta)$, $T^{*}M$ with
canonical coordinates $(x,\theta \, ; \,p,\tau)$, and 
$X_c=\left\{ (x,\theta\,;\, p,\tau) \big | \tau \geq 0\right\}$.  Then the symplectic cut is the manifold 
$Y\cong \bbR^{2(n-1)}_{(x,p)}\times \bbC_{z}$ (where the symplectic form on $\bbC$ is
$(\frac{1}{\sqrt{2}i}dz\wedge d\bar z)$), and the projection $X_{c}\to Y$ is
\[
(x,\theta; p, \tau) \mapsto (x, p ; \sqrt{\tau} e^{i \theta}).
\]
Any function $Q(x,\theta; p,\tau)$ whose Hamilton flow preserves $X_{c}$ descends to the continuous function $Q_{Y}$ on $Y$
\[
Q_Y(x,p,z)=Q(x,\arg z\,;\,p, |z|^2)
\]
for $z\not=0$ and $Q_Y(x,p,0) = Q(x,\theta\,;\,p, 0)$ for any value of $\theta$.
(The continuity of $Q_{Y}$ will be seen in what follows.)
If $\left\{ P, Q\right\}$ vanishes to order $k$ in $\partial X_c$, then
\[
\left\{ P, Q\right\}=\frac{\partial Q}{\partial \theta}=\tau^k \, F(x,\theta\,;\,p,\tau)
\]
for some smooth function $F$.  This implies that there exist smooth functions $G_1(x,\theta\,;\,p,\tau),\  G_2(x\,;p,\tau)$ such that
\[
Q =  \tau^k \, G_{1}(x,\theta \, ; \, p,\tau) + G_2(x\, ; \, p,\tau).
\]
It follows that
\[
Q_Y(x,p,z)=\begin{cases}
|z|^{2k}\, G_1(x, \arg z\, ;\, p, |z|^{2})+G_2(x\, ; \, p, |z|^{2})\quad \text{if}\ z\not= 0\\ \\
G_{2}(x\, ; \, p,0) \quad \text{if}\ z= 0 .
\end{cases}
\]
Define $\tilde{Q}_{Y} (x',\xi', z) = Q_Y(x',\xi',z) - G_2(x', |z|^{2},\xi')$.  Then $Q_{Y}$ and $\tilde{Q}_{Y} $ differ by a smooth function,
and therefore it suffices to show that $\tilde{Q}_{Y} (x',\xi', z)$ is $C^{k}$.  For this, we'll show that a 
function of the form
\[
\tilde G^k (x,p,z)=\begin{cases}
|z|^{2k}\, G(x,\arg z \, ; \, p, |z|^{2})\quad \text{if}\ z\not= 0\\ \\
0 \quad \text{if}\ z= 0
\end{cases}
\]
is a $C^{k}$ function for any smooth function $G(x,\theta \,; \, p,\tau)$.  The function $\tilde G^k$ is smooth in the region $z\neq 0$, so
we only need to show the existence and continuity of partial derivatives at $z=0$. 
We will prove the statement by induction. Write $z=(u_1,u_2)$. For $k=1$,
\[
\frac{\partial \tilde G^1(x,p,z)}{\partial u_1}_{\big |_{z=0}}=\lim_{u_1\to 0}\frac{u_1^2 G(x,0\,;\, p, u_1^2)}{u_1}=0,
\]
and for $z\not= 0$
\[
\frac{\partial \tilde G^1}{\partial u_1}=2u_1 G(x,\arg z \, ; \, p, |z|^{2})-u_2 \frac{\partial G}{\partial \theta}(x,\arg z \, ; \, p, |z|^{2})+2 u_1|z|^2 \frac{\partial G}{\partial \tau}(x,\arg z \, ; \, p, |z|^{2}),
\]
which converges to zero as $z\to 0$. The partial derivative with respect to $u_2$ is similar, showing that $\tilde G^1 \in C^1$.

Assume that the statement is valid for $k-1>0$. Notice that
\[
\frac{\partial \tilde G^k}{\partial u_1}=2k u_1 \tilde G^{k-1}-u_2\, \widetilde{\left( \frac{\partial G}{\partial \theta} \right)}^{k-1}+2u_1\widetilde{ \left( \frac{\partial G}{\partial \tau} \right)}^k, \text{ and }
\]
\[
\frac{\partial \tilde G^k}{\partial u_2}=2k u_2\tilde G^{k-1}+u_1\widetilde{\left( \frac{\partial G}{\partial \theta} \right)}^{k-1}+2u_2\widetilde{\left( \frac{\partial G}{\partial \tau} \right)}^k.
\]
 Each of the terms on the right-hand side of each of the equalities above are at least $C^{k-1}$, finishing the proof.

\end{proof}


\subsection{A Symbolic Description of The Propagator $e^{-it~h^{-1} \Pi \widehat{Q} \Pi }$}

Throughout this section $Q$ will denote a smooth function such that the Poisson bracket $\{P ,Q\}$ vanishes to second
order at $\partial X_c$ (c.f.~  Lemma \ref{Lemma:QHypothesis}).   As we saw in the previous section we then obtain
a classical flow $\Phi_t^Q|_{X_c}$ that descends to a continuous flow on the cut space $Y$.  In this section we analyze the quantum mechanical propagator  $e^{-it~h^{-1} \Pi \widehat{Q} \Pi }$, where $ \widehat{Q}$ is a 
semiclassical pseudodifferential operator on $M$ with symbol $Q$
and whose subprincipal symbol satisfies 
\[
\text{Sub } \widehat Q _{\big |_{X_c}}=0 .
\]
Before we state the main result, let us start with a proposition:


\begin{proposition}
\label{prop:QVComm}
For each semiclassical Fourier integral operator $\widetilde V(\h)$ in \\ 
$\text{sc-}I^{-1/2}\left( M\times M; \F \right)$, 
the commutator $\left[ \widehat Q(\h), \widetilde V(\h) \right]$ is in the class\\
$\text{sc-}I^{-3/2}\left(M\times M; \F \right)$.
Its principal symbol is
\[
\sigma_{[\widehat Q,\widetilde V]}(\bar x,\bar y)=\frac{\h}{i} \mathfrak{L}_{\Xi_Q}V(\bar x,\bar y),
\]
where $V$ is the principal symbol of $\widetilde V$, and $\mathfrak{L}_{\Xi_Q}$ is the Lie derivative obtained by letting
the Hamilton flow of $Q$ act diagonally on $\F$.

\end{proposition}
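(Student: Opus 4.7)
The plan is to apply the semiclassical symbol calculus for Fourier integral operators to both products $\widehat Q\,\widetilde V$ and $\widetilde V\,\widehat Q$, cancel their common leading term, and read off the sub-leading one. Both products lie a priori in $\text{sc-}I^{-1/2}\left( M\times M; \F \right)$, since composing a zeroth-order $\h$-pseudodifferential operator with an FIO preserves the Lagrangian and the order.  Their leading symbols are respectively $(Q\circ\pi_L)\cdot V$ and $(Q\circ\pi_R)\cdot V$, where $\pi_L,\pi_R:\F\to T^*M$ are the two projections.  Hence the leading $-1/2$ symbol of $[\widehat Q,\widetilde V]$ equals $\left( Q\circ\pi_L - Q\circ\pi_R\right) V$.

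By Lemma \ref{Lemma:QHypothesis} the standing hypothesis $\{P,Q\}=P^{2}F$ gives $\{P,Q\}_{|_{\partial X_c}}=0$, so $Q$ is constant on the leaves of the null foliation of $\partial X_c$.  Therefore $Q\circ\pi_L = Q\circ\pi_R$ on $\F$, the leading symbol vanishes, and the symbolic exact sequence for semiclassical FIOs forces $[\widehat Q,\widetilde V]\in \text{sc-}I^{-3/2}\left( M\times M; \F \right)$.

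To identify the $-3/2$ principal symbol, I would reduce to the model case: $\F$ is the flow-out of $\{p_1=0\}$ and $\widetilde V$ is given by a semiclassical oscillatory integral of the form
\[
\frac{1}{(2\pi\h)^n}\int e^{i\h^{-1}\left[(x'-y')p' + (x_1-y_1-s)p_1\right]}\,v(s,x,y,p';\h)\,ds\,dp,
\]
whose principal part represents $V$.  Writing $\widehat Q\,\widetilde V$ and $\widetilde V\,\widehat Q$ by means of the Kohn--Nirenberg representation of $\widehat Q$ and expanding by stationary phase, the order $\h^0$ contributions cancel by the observation above.  The surviving term is of order $\h$ and splits into two pieces: a transport piece, obtained by pairing $\partial_\xi Q$ against $\partial_x V$ and $\partial_x Q$ against $\partial_\xi V$ diagonally, which assembles into the Lie-derivative expression $\mathfrak{L}_{\Xi_Q}V$ on $\F$; and a subprincipal piece proportional to the subprincipal symbol of $\widehat Q$, which vanishes on $X_c$ by assumption.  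The universal factor $\h/i$ arises from the standard stationary-phase normalization, exactly as in the classical formula $\sigma([\widehat A,\widehat B])=(\h/i)\{A,B\}$ for two $\Psi$DOs.

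The main obstacle is verifying that the diagonal lift of $\Xi_Q$ is in fact tangent to $\F$, so that $\mathfrak{L}_{\Xi_Q}V$ makes intrinsic sense as a half-density on $\F$.  This follows from $[\Xi_P,\Xi_Q]=\Xi_{\{P,Q\}}=0$ on $\partial X_c$ (again a consequence of Lemma \ref{Lemma:QHypothesis}): commutation of the $P$- and $Q$-flows on $\partial X_c$ means that whenever $\bar x=\Phi^P_s(\bar y)$ one has $\Phi^Q_t\bar x=\Phi^P_s(\Phi^Q_t\bar y)$, so the diagonal $Q$-flow preserves $\F$.  A brief check that the model-case computation is invariant under the changes of coordinates used to define $\text{sc-}I^{-1/2}(\F)$ globally (via the partition-of-unity construction of Section \ref{sec:GeneralCase}) then completes the proof.
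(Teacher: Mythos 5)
Your proposal is correct and follows essentially the same route as the paper: reduce to the model case, expand both compositions $\widehat Q\,\widetilde V$ and $\widetilde V\,\widehat Q$ by stationary phase, observe that the leading amplitudes cancel on $\F$ because $Q$ is constant along the leaves (the paper phrases this as the factorization $q_0(z,(0,\omega))-q_0(x,(0,\omega))=(z'-x')\cdot d$, which yields the extra power of $\h$ after integration by parts), and then use the second-order vanishing of $\{P,Q\}$ together with $\mathrm{Sub}\,\widehat Q|_{X_c}=0$ to identify the order $-3/2$ symbol with $\tfrac{\h}{i}\mathfrak{L}_{\Xi_Q}V$. Your invariant statement of the leading-order cancellation via $Q\circ\pi_L=Q\circ\pi_R$, and your explicit check that the diagonal $Q$-flow preserves $\F$, are only cosmetic refinements of the paper's argument.
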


\begin{proof}
Write the Schwartz kernel of $\widehat Q$ in the model case as
\[
\widehat Q(y,x)=\frac{1}{(2\pi \h)^n} \int e^{i(y-x)p/h}q(y,p,\h) dp,
\]
where 
\[
q(y,p,\h)\sim q_0(y,p)+\h q_1(y,p)+\ldots,
\]
and the Schwartz kernel of $\widetilde V \in \text{sc-}I^{-1/2}(M\times M~;~\F)$ as
\[
\widetilde V(z,y)=\frac{1}{(2\pi\h)^n}\int e^{i(z'-y')\omega/\h}v(z,y,\omega,\h)d\omega,
\]
where
\[
v(z,y,\omega,\h)\sim v_0(z,y,\omega)+\h v_1(z,y,\omega)+\ldots
\]
Then the Schwartz kernel of $\widetilde V \circ \widehat Q$ is
\[
\begin{aligned}
\widetilde V \circ \widehat Q(z,x)=\frac{1}{(2\pi \h)^n} & \int e^{i\h^{-1}(z'-x')\omega}\\
& \left[ \frac{1}{(2\pi\h)^n}\int e^{i\h^{-1} \left[ (y-x)p+(x'-y')\omega \right]}v(z,y,\omega,\h)q(y,p,\h)dydp \right] d\omega
\end{aligned}
\]
Applying the stationary phase method to the integral in brackets, one gets
\begin{equation}
\label{Eq:CommCont1}
\begin{aligned}
 &\frac{1}{(2\pi\h)^n}\int e^{i\h^{-1}\left[ (y-x)p+(x'-y')\omega \right]}v(z,y,\omega,\h)q(y,p,\h)dydp \\
& \sim v_0(z,x,\omega)q_0(x,(0,\omega))-\frac{\h}{i}\frac{\partial v_0(z,x,\omega)}{\partial x} \frac{\partial q_0(x,p)}{\partial p}_{\big |_{p=(0,\omega)}}
-\frac{\h}{i}v_0(z,x,\omega)\frac{\partial q_0(x,p)}{\partial x\partial p}_{\big |_{p=(0,\omega)}} \\
& +\h v_1(z,x,\omega)q_0(x,(0,\omega)) +\h v_0(z,x,\omega)q_1(x,(0,\omega)).
\end{aligned}
\end{equation}

The Schwartz kernel of $\widehat Q \circ \widetilde V$ can be written as
\[
\begin{aligned}
\widehat Q\circ \widetilde V(z,x)=\frac{1}{(2\pi\h)^n} & \int e^{i(z'-x')\omega/\h} \\
& \left[ \frac{1}{(2\pi\h)^n} \int e^{i\h^{-1} 
\left[ (z-y)p+(y'-z')\omega \right]}q(z,p,\h)v(y,x,\omega,\h)dpdy\right] d\omega
\end{aligned}
\]
Applying stationary phase to the amplitude above, one gets
\begin{equation}
\label{Eq:CommCont2}
\begin{aligned}
& \frac{1}{(2\pi\h)^n} \int e^{i\h^{-1} \left[ (z-y)p+(y'-z')\omega \right]}q(z,p,\h)v(y,x,\omega,\h)dpdy\\
& \sim q_0(z,(0,\omega))v_0(z,x,\omega)+\frac{\h}{i} \frac{\partial q_0(z,p)}{\partial p}_{\big |_{p=(0,\omega)}}\frac{\partial v_0(z,x,\omega)}{\partial z} \\
& +\h q_1(z,(0,\omega))v_0(z,x,\omega)+\h q_0(z,(0,\omega))v_1(z,x,\omega)
\end{aligned}
\end{equation}

Therefore, the commutator $[\widehat Q,\widetilde V]$ 
has as leading amplitude
\[
v_0(z,x,\omega)\left( q_0(z,(0,\omega) -q_0(x,(0,\omega)) \right).
\]
This vanishes at $x'=z'$, which corresponds to the flow-out. 
Therefore $[\widehat Q,\widetilde V]\in \text{sc-}I^{-3/2}(M\times M~;~\F)$. 
In order to compute the principal symbol there, we notice that
\[
q_0(z,(0,\omega)) -q_0(x,(0,\omega))=(z'-x')\cdot d(z,x,\omega),
\]
where $d = (d_2,\ldots ,d_{n})$ is a vector-valued function such that
\[
d(z,x,\omega)_{\big |_{x'=z'}}=\nabla_{ x'}q_0(x,(0,\omega))_{\big |_{x'=z'}}.
\]
Therefore
\[
\frac{1}{(2\pi\h)^n} \int e^{i(z'-x')\omega/\h}\left( q_0(z,(0,\omega) -q_0(x,(0,\omega))
 \right) v_0(z,x,\omega) d\omega
\] 
\begin{equation}
\label{Eq:CommCont3}
=\frac{-\h}{i(2\pi\h)^n} \int e^{i(z'-x')\omega/\h}\sum_{j=1}^{n-1} \frac{\partial}{\partial \omega_j} 
\left( d_j(z,x,\omega) v_0(z,x,\omega) \right) d\omega .
\end{equation}

The principal symbol of $[\widehat Q,\widetilde V]$ in $\text{sc-}I^{-3/2}(M\times M~;~\F)$ can be then computed taking all the contributions from equations \eqref{Eq:CommCont1}, \eqref{Eq:CommCont2} and \eqref{Eq:CommCont3}. Since $q_0(x,p)$ is constant along the orbits, then
\[
\frac{\partial^2 q_0(x,p)}{\partial x_j\partial p_j}_{\big |_{p=(0,\omega)}}=\frac{\partial^2 q_0(z,p)}{\partial x_j\partial p_j}_{\big |_{\stackrel{x'=z'}{p=(0,\omega)} }}, \text{ for any }j\ge 2.
\]
Since $\left\{ Q,P \right\}$ vanishes at second order on $\partial X_c$, then 
$\frac{\partial^2 q_0(x,p)}{\partial x_1 \partial x_1}_{\big |_{p_1=0}}=0$.  
Assuming all these conditions, and the fact that the subprincipal symbol
\[
\text{Sub }\widehat Q(x,p)=q_1(x,p)-\frac{1}{2i}\sum_{j=1}^n \frac{\partial^2 }{\partial x_j \partial p_1}_{\big |_{p=(0,\omega)}}
\]
vanishes on $\partial X_c$, the principal symbol of the commutator reduces to
\[
\frac{\partial q_0(z,p)}{\partial p}_{\big |_{\stackrel{x'=z'}{p=(0,\omega)}}} \frac{\partial v_0(z,x,\omega)}{\partial z}_{\big |_{x'=z'}}
+\frac{\partial q_0(x,p)}{\partial p}_{\big |_{\stackrel{x'=z'}{p=(0,\omega)}}}\frac{\partial v_0(z,x,\omega)}{\partial x}_{\big |_{x'=z'}}
\]
\[
-\sum_{j=2}^n \frac{\partial q_0(x,p)}{\partial x_j}_{\big |_{\stackrel{x'=z'}{p=(0,\omega)}}} \frac{\partial v_0(z,x,\omega)}{\partial \omega}_{\big |_{x'=z'}}.
\]
One can check that this is the Lie derivative of $V$ with respect to the Hamiltonian field $\Xi_Q$ in the sense in the statement
of the lemma.
\end{proof}

Notice that there is a way to define classes $J^{\ell,m}$ for a general pair of admissible Lagrangian submanifolds that intersect cleanly (see \cite{MU,GU}), and not only  for the diagonal and the flow-out of $\partial X_{c}$.  The main result of this section is the following:

\begin{theorem}
\label{thm:Propagator}
Suppose $\widehat Q$ is a zeroth-order  semiclassical pseudodifferential operator satisfying the conditions of Lemma \ref{Lemma:QHypothesis}. Assume $sub \widehat Q(\h)=0$. Then
\[
\Pi e^{-it\h^{-1} \Pi \widehat Q \Pi}  \in J^{-1/2,1/2}\left( M\times M ; \Delta(t), \F (t) \right),
\]
where 
\begin{equation}
\label{tDelta}
\Delta(t)=\left\{ \left( \x,\y \right) \big | \x, \y \in T^{*}(M\times M), \x=\Phi_t^Q (\y) \right\}
\end{equation}
\begin{equation}
\label{tLambda}
\F(t)=\left\{ \left( \x,\y \right) \big | \x,\y \in \partial X_c,~ \exists s \in \mathbb{R} \textrm{ such that } \x=\Phi_s^P \Phi_t^Q (\y) \right\}.
\end{equation}

\end{theorem}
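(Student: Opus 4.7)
The plan is to mimic the construction of Lemma \ref{Lemma_ExpAlgebra}, but now with the $\h^{-1}$ scaling in the exponent, so that the classical flow $\Phi_t^Q$ is actually activated and carries the diagonal to $\Delta(t)$ and the flow-out to $\F(t)$. As a leading ansatz I would take
\[
\widetilde W_0(t) := \Pi\, e^{-it\h^{-1}\widehat Q}.
\]
The factor $e^{-it\h^{-1}\widehat Q}$ is a zeroth-order semiclassical Fourier integral operator associated to the graph $\Delta(t)$ of $\Phi_t^Q$. Under the hypotheses of Lemma \ref{Lemma:QHypothesis}, $\Phi_t^Q$ preserves $\partial X_c$ and commutes with $\Phi_s^P$ on the boundary, so the null-foliation is preserved. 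Consequently the transverse compositions $\Delta \circ \Delta(t) = \Delta(t)$ and $\F \circ \Delta(t) = \F(t)$ hold, and by the composition theorem (extending Proposition \ref{CompoModelCase} to time-evolved Lagrangians) one concludes $\widetilde W_0(t) \in J^{-1/2,1/2}(M\times M; \Delta(t), \F(t))$, with symbols inherited from those of $\Pi$ by transport along $\Phi_t^Q$.

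Next I would compute the residual against the evolution equation $\tfrac{1}{i}\h\,\partial_t W + \Pi \widehat Q \Pi W = 0$, $W(0)=\Pi$. A short calculation yields
\[
\tfrac{1}{i}\h\,\partial_t \widetilde W_0 + \Pi \widehat Q \Pi\, \widetilde W_0 = -\Pi [\widehat Q,\Pi]\, e^{-it\h^{-1}\widehat Q}.
\]
Since $\{P, Q\}$ vanishes to second order on $\partial X_c$ and $\mathrm{sub}\,\widehat Q = 0$, the second part of Proposition \ref{Prop:CommutatorPiQ} gives $[\Pi,\widehat Q] \in \text{sc-}I^{-5/2}(M \times M; \F)$. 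Composing with the FIO $e^{-it\h^{-1}\widehat Q}$ evolves the flow-out by $\Phi_t^Q$ and produces a residual in $\text{sc-}I^{-5/2}(M \times M; \F(t))$, two $\h$-orders smaller than the target class.

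Following Lemma \ref{Lemma_ExpAlgebra}, I would then improve the ansatz inductively by adding corrections $\widetilde V_j \in \text{sc-}I^{-1/2-j}(M\times M; \F(t))$. Identifying $\F(t)$ with $\F$ via the diagonal action of $\Phi_t^Q$ and using Proposition \ref{prop:AlgerbaSheaf}, the symbol of $\widetilde V_j$, viewed as a family of smoothing operators $\mathcal V_{j,s}$ on the fibers $F_s$, must solve the operator-valued linear ODE
\[
\tfrac{1}{i}\h\,\partial_t \mathcal V_{j,s} + \Pi_{F_s} M_{Q|_{F_s}} \Pi_{F_s} \circ \mathcal V_{j,s} = -\mathcal R_{j,s}, \qquad \mathcal V_{j,s}(0) = 0,
\]
where $\mathcal R_{j,s}$ is the fiber-symbol of the current error and $\Pi_{F_s} M_{Q|_{F_s}} \Pi_{F_s}$ is the symbol of $\Pi\widehat Q\Pi$ on $\F$ identified by Proposition \ref{Projector}. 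This ODE can be integrated explicitly in each fiber, and each step lowers the residual by one power of $\h$. An asymptotic sum $\widetilde W \sim \widetilde W_0 + \sum_{j\ge 0} \h^{j+1}\widetilde V_j$ in $J^{-1/2,1/2}(\Delta(t), \F(t))$ produces an approximate propagator with residual $O(\h^\infty)$, and a standard Duhamel argument then identifies $\widetilde W$ with the true propagator $W(t) = \Pi e^{-it\h^{-1}\Pi\widehat Q\Pi}$ modulo smoothing.

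The main obstacle is the iterative step: one must verify that the solutions $\mathcal V_{j,s}$ assemble into a smooth section of the sheaf $\mathfrak{SO}$ over $\F(t)$, and that this section extends to a distribution conormal to the intersection $\Sigma(t) = \Delta(t)\cap\F(t)$ compatibly with the symbol on $\Delta(t)$, as required by the class $J^{-1/2,1/2}$ and the symbolic compatibility condition of Theorem \ref{Thm:SymbComp}. Because the symbol calculus on the flow-out is non-commutative, this intertwines the $Q$-dynamics with the fiberwise Szeg\H{o}-type projector $\Pi_{F_s}$, and the full strength of Lemma \ref{Lemma:QHypothesis}, which guarantees that $\Phi_t^Q$ descends to a smooth flow on the base $S$, is needed so that the fiberwise constructions glue coherently.
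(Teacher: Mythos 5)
Your setup is close in spirit to the paper's: the paper also reduces to the commutator estimate of Proposition \ref{Prop:CommutatorPiQ} (which, under the hypotheses of Lemma \ref{Lemma:QHypothesis} and $\mathrm{sub}\,\widehat Q=0$, gives $[\Pi,\widehat Q]\in \text{sc-}I^{-5/2}(M\times M;\F)$), builds an approximate solution by iterated corrections supported on the flow-out, finishes with Duhamel, and only at the very end invokes a transversal-composition result (a variation of Proposition 4.1 of \cite{GU}) to pass from the fixed pair $(\Delta,\F)$ to $(\Delta(t),\F(t))$ by composing with the FIO $e^{-it\h^{-1}\widehat Q}$. The difference is that the paper first gauges away the free evolution: it studies $W(t)=\Pi e^{-it\h^{-1}\Pi\widehat Q\Pi}e^{it\h^{-1}\widehat Q}$, which satisfies $\h D_tW+[\widehat Q,W]+[\Pi,\widehat Q]W=0$, and shows $W(t)\in J^{-1/2,1/2}(\Delta,\F)$ with correction symbols determined by the \emph{transport} equations $\partial_t V+\mathfrak{L}_{\Xi_Q}V=-iS$ coming from Proposition \ref{prop:QVComm}, where $[\widehat Q,\widetilde V]$ gains one power of $\h$ with symbol $\tfrac{\h}{i}\mathfrak{L}_{\Xi_Q}V$.

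The genuine gap is in your iterative step. You transplant the hierarchy of Lemma \ref{Lemma_ExpAlgebra}, whose symbol-level generator is the fiberwise Toeplitz operator $\Pi_{F_s}M_{Q|_{F_s}}\Pi_{F_s}$, into the semiclassical scaling, and propose to solve
\[
\tfrac{\h}{i}\,\partial_t\mathcal V_{j,s}+\Pi_{F_s}M_{Q|_{F_s}}\Pi_{F_s}\circ\mathcal V_{j,s}=-\mathcal R_{j,s},\qquad \mathcal V_{j,s}(0)=0 .
\]
This equation mixes $\h$-orders: the generator is $O(1)$ while the time derivative carries an $\h$, so Duhamel gives $\mathcal V_{j,s}(t)=-\tfrac{i}{\h}\int_0^t e^{i(\tilde t-t)\h^{-1}\Pi_{F_s}M_{Q}\Pi_{F_s}}\mathcal R_{j,s}(\tilde t)\,d\tilde t$, which carries an overall factor $\h^{-1}$ and rapidly oscillating factors $e^{i(\tilde t-t)\h^{-1}Q(s)}$ (recall $Q$ is constant on each leaf, so the Toeplitz operator is $Q(s)\Pi_{F_s}$). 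Such a family is not a classical symbol of the claimed order, so the correction does not lie in $\text{sc-}I^{-1/2-j}(M\times M;\F(t))$ and the residual is not lowered by one power of $\h$; the induction does not close. The hierarchy appropriate to $e^{-it\Pi\widehat Q\Pi}$ (no $\h^{-1}$, Lemma \ref{Lemma_ExpAlgebra}) is not the one appropriate to $e^{-it\h^{-1}\Pi\widehat Q\Pi}$: in the latter the leading symbolic evolution is classical transport by $\Xi_Q$, and the clean way to exhibit it is precisely the gauge transformation by $e^{it\h^{-1}\widehat Q}$ that you tried to bypass; once you conjugate, your scheme collapses to the paper's proof. (There is also a harmless sign slip in your residual computation: it equals $+\Pi[\widehat Q,\Pi]e^{-it\h^{-1}\widehat Q}$.)
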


\begin{remark}
In this statement $t$ is a parameter, but we could also consider $t$ as a variable (in which case the
kernel of the operator would be a family of functions on $\bbR\times M\times M$).  
Also, the symbols of $\Pi e^{-it\h^{-1} \Pi \widehat Q \Pi} $
can easily be computed.
\end{remark}

\begin{proof}
Let us define the following operator:
\begin{equation}
\label{eq:W}
W(t):=\Pi e^{-it \h^{-1} \Pi \widehat Q \Pi} \, e^{it\h^{-1} \widehat Q}.
\end{equation}

We first prove the following:
\begin{lemma}
$W(t)\in J^{-1/2,1/2}\left( M\times M;\Delta, \F \right)$, and the principal symbol in the diagonal is $\sigma_0 =\chi_{X_c}$.
\end{lemma}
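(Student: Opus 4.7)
The plan is to construct an Ansatz $\widetilde{W}(t)\in J^{-1/2,1/2}(M\times M;\Delta,\F)$ that satisfies the same transport equation as $W(t)$ modulo $O(\h^\infty)$, and then conclude via Duhamel's principle, following the template of Lemma~\ref{Lemma_ExpAlgebra}. First I would differentiate \eqref{eq:W} directly. Using that $\widehat Q$ commutes with $e^{it\h^{-1}\widehat Q}$ and that $\Pi\widehat Q\Pi$ commutes with $e^{-it\h^{-1}\Pi\widehat Q\Pi}$, I would obtain the evolution equation
\begin{equation*}
\frac{\h}{i}\dot W(t) + \Pi\widehat Q\Pi\, W(t) - W(t)\,\widehat Q = 0,\qquad W(0)=\Pi.
\end{equation*}

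Second, I would take the $t$-independent zeroth approximation $\widetilde W_0(t):=\Pi$, for which the residual is $E_0=\Pi\widehat Q\Pi-\Pi\widehat Q=-\Pi[\Pi,\widehat Q]$. The key point is that under the hypothesis of Lemma~\ref{Lemma:QHypothesis} (tangency of $\Xi_Q$ to $\partial X_c$ and second-order vanishing of $\{P,Q\}$ on $\partial X_c$, equivalent to the Hamilton flows of $P$ and $Q$ commuting there), together with $\text{Sub}\,\widehat Q=0$ on $\partial X_c$, all three hypotheses of Proposition~\ref{Prop:CommutatorPiQ} hold. Consequently $[\Pi,\widehat Q]\in\text{sc-}I^{-5/2}(M\times M;\F)$, and by the composition rules of Propositions~\ref{prop:SymbCompoModelCase} and~\ref{IntersJs} so is $E_0$ --- two orders below the generic size of a flow-out element of the algebra, which is precisely what makes the iteration below converge.

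Third, I would correct iteratively: set $\widetilde W_{k+1}(t)=\widetilde W_k(t)+\h^{k+1}V_{k+1}(t)$ with $V_{k+1}\in\text{sc-}I(\F)$ chosen to annihilate the top-order part of the current error. As in the proof of Lemma~\ref{Lemma_ExpAlgebra}, the symbol equation for $V_{k+1}$ reduces --- via the identification of Proposition~\ref{prop:AlgerbaSheaf} of flow-out symbols with operator-valued sections of the sheaf $\mathfrak{SO}$ --- to a pseudodifferential-operator-valued Heisenberg equation along each fiber $F_s$ of $\pi:\partial X_c\to S$, with generator $\Pi_{F_s}M_{Q|_{F_s}}\Pi_{F_s}$ and forcing given by the restriction of the previous error to $F_s$; this is solvable with smooth $s$-dependence. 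A Borel sum $\widetilde W(t)\sim \Pi+\sum_{k\ge 1}\h^{k}V_k(t)$ then yields an element of $J^{-1/2,1/2}(\Delta,\F)$ solving the transport equation modulo $O(\h^\infty)$, with diagonal symbol $\chi_{X_c}$ preserved from $\widetilde W_0=\Pi$ (all corrections live on $\F$ and so contribute nothing on $\Delta\setminus\Sigma$). Duhamel's principle, applied as in the final step of the proof of Lemma~\ref{Lemma_ExpAlgebra}, yields $W(t)-\widetilde W(t)=O(\h^\infty)$, completing the proof.

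The main obstacle will be verifying that each correction $V_{k+1}$ can actually be built inside the original flow-out class $\text{sc-}I(\F)$, rather than drifting into the $t$-dependent class $\text{sc-}I(\F(t))$ that a naive composition of $\Pi e^{-it\h^{-1}\Pi\widehat Q\Pi}$ with the FIO $e^{it\h^{-1}\widehat Q}$ would suggest. This is precisely where the hypothesis of Lemma~\ref{Lemma:QHypothesis} is essential: $\Phi_t^Q$ preserves $\partial X_c$ and (to second order there) commutes with $\Phi^P$, so its fiberwise action induces an automorphism of $\pi:\partial X_c\to S$, and the iterative scheme closes inside $J^{-1/2,1/2}(M\times M;\Delta,\F)$.
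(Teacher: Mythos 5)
Your overall skeleton matches the paper: the evolution equation $\frac{\h}{i}\dot W + \Pi\widehat Q\Pi W - W\widehat Q = 0$ (equivalent to the paper's $\h D_t W + [\widehat Q, W] + [\Pi,\widehat Q]W = 0$, since $\Pi W = W$), the zeroth approximation $\widetilde W_0 = \Pi$, the observation that the residual $-\Pi[\Pi,\widehat Q]$ lands in $\text{sc-}I^{-5/2}(\F)$ thanks to Proposition~\ref{Prop:CommutatorPiQ} and the Lemma~\ref{Lemma:QHypothesis} hypotheses, the Borel sum, and the Duhamel step. You also correctly flag that the crux is keeping the iteration inside $\text{sc-}I(\F)$ rather than $\text{sc-}I(\F(t))$, and that the second-order vanishing of $\{P,Q\}$ is what makes this possible.

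The gap is in the transport equation you write for the corrections $V_{k+1}$. You carry over the template of Lemma~\ref{Lemma_ExpAlgebra}, whose propagator $\Pi e^{-it\Pi\widehat Q\Pi}$ has \emph{no} $\h^{-1}$ in the exponent; there the $\partial_t$ term balances the leading-order composition with $\Pi\widehat Q\Pi$ and the symbol equation does involve the fiberwise generator $\Pi_{F_s}M_{Q|_{F_s}}\Pi_{F_s}$. Here the exponent is $e^{-it\h^{-1}\Pi\widehat Q\Pi}$, so the evolution equation carries a factor $\h$ on $\dot W$. The leading-order contribution of $\Pi\widehat Q\Pi W - W\widehat Q$ already cancels: $\{P,Q\}|_{\partial X_c}=0$ forces $Q$ to be constant on each fiber $F_s$, so $\sigma_1(\Pi\widehat Q\Pi)_s = Q_S(s)\,\Pi_{F_s}$ is a scalar multiple of $\Pi_{F_s}$, which acts trivially (equally to right-composition by $\widehat Q$) on any symbol already in the range of $\Pi_{F_s}$. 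Consequently $\frac{\h}{i}\dot W$ must balance the \emph{sub}-leading, order-$\h$ part of $\Pi[\widehat Q,\cdot]$, which by Proposition~\ref{prop:QVComm} is the Lie derivative $\frac{\h}{i}\mathfrak{L}_{\Xi_Q}$ obtained by letting $\Phi^Q$ act diagonally on $\F$. The paper's transport equation is therefore the geometric one,
\[
\frac{\partial V_1}{\partial t} + \mathfrak{L}_{\Xi_Q}V_1 = -i S_2, \qquad V_1\big|_{t=0}=0,
\]
which transports along $\Xi_Q$ and hence genuinely moves between fibers of $\pi:\partial X_c\to S$. Your proposed fiberwise Heisenberg equation with generator $\Pi_{F_s}M_{Q|_{F_s}}\Pi_{F_s}=Q_S(s)\Pi_{F_s}$ degenerates on the range of $\Pi_{F_s}$ to $\partial_t V_s = -iS_s$, a plain time-integration of the forcing with no $Q$-transport at all; that is not the right equation and would not produce an approximate solution of the operator equation. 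You need Proposition~\ref{prop:QVComm} (the Lie-derivative formula for $[\widehat Q,\widetilde V]$) as the engine of the iteration, not the fiberwise composition rule of Proposition~\ref{prop:SymbCompoModelCase}.
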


\begin{proof}
Let us define $D_t=\frac{1}{i}\frac{\partial}{\partial t}$. $W(t)$ satisfies the following equation
\begin{equation}
\label{Eq_WPi}
\left\{
\begin{array}{lll}
\h D_t W(t)+\left[ \widehat Q, W(t) \right]+\left[ \Pi, \widehat Q \right] W(t)=0\\ \\
W_{\big |_{t=0}}=\Pi
\end{array}
\right.
\end{equation}

Similarly to the proof of Lemma \ref{Lemma_ExpAlgebra}, using the symbol calculus we will construct a sequence of approximate
solutions of equation \eqref{Eq_WPi}. 
This construction makes the right-hand side of order $O(\h^\infty)$, and an application of Duhamel's principle concludes the proof. 

As a  first approximation we take $\widetilde W_{0}=\Pi$.  This is a sensible choice since
\[
\h^2 \widetilde S_2:= \h D_t \widetilde W_{0}+\left[ \widehat Q, \widetilde W_{0} \right]+\left[ \Pi, \widehat Q \right] \widetilde W_{0}=-\left[ \Pi, \widehat Q \right] (I-\Pi) \in \text{sc-}I^{-5/2}\left( M\times M;\F \right),
\]
by Proposition \ref{Prop:CommutatorPiQ}.

We now modify $\widetilde W_{0}$ by elements in  $I(M\times M; \F)$ to lower the order of the remainder. 
It is easy to see that the symbol of the correction term is the solution to the problem
\[
\left\{
\begin{array}{ll}
\frac{\partial V_1 (\bar x, \bar y, t)}{\partial t}+\mathfrak{L}_{\Xi_Q}V_1(\bar x, \bar y,t)=-i S_2,\\ \\
{V_1}_{\big |_{t=0}}=0
\end{array}
\right.
\]
where $S_1$ is the principal symbol of $\widetilde S_1 \in \text{sc-}I^{-1/2}(M\times M;  \F)$.  
Let $\widetilde V_{1}\in\text{sc-}I^{-1/2}(M\times M;  \F)$ 
be an operator with this as symbol, and let $\widetilde W_{1}=\widetilde W_{0}+\h \widetilde V_{1}$.

Since $\left[ \Pi, \widehat Q \right] \in \text{sc-}I^{-5/2} (M \times M;\F)$ by Proposition \ref{Prop:CommutatorPiQ}, we get
\[
\h^3 \widetilde S_{3}:= \h D_t \widetilde W_{1}+\left[ \widehat Q, \widetilde W_{1} \right]+\left[ \Pi, \widehat Q \right]  \widetilde W_{1} 
 \in I^{-7/2}(M\times M ; \F)
\]

Proceeding inductively in this fashion, we obtain an infinite sequence $\{\widetilde V_{j}\}$ such that for all $J$
\[
\h D_t \left( \widetilde W_{0} +\sum_{j=1}^J \h^j \widetilde V_{j} \right)+\left[ \widehat Q,  \widetilde W_{0}+\sum_{j=1}^J \h^j \widetilde V_{j} \right]
+\left[ \Pi, \widehat Q \right] \left( \widetilde W_{0}+\sum_{j=1}^J \h^j \widetilde V_{j} \right)
\]
\[
= \h^{J+2} \widetilde S_{J+2} \in I^{-5/2-J}(M\times M; \F).
\]
Next we take an operator $\widetilde V\in \text{sc-}I^{-1/2}\left(M\times M; \F \right)$ 
such that $\widetilde V\sim \sum_{j=1}^\infty \h^j \widetilde V_j$, and define $\widetilde W=\widetilde W_0+\widetilde V$. 
\end{proof}

Going back to the proof of the theorem, notice that
\[
\Pi e^{-it \h^{-1} \Pi \widehat Q \Pi}=W(t)e^{-it N \widehat Q}.
\]
The Lagrangian $\Delta(t)$ intersects $\Delta$ and $\F$ transversally. 
Using a variation of the Proposition 4.1 in \cite{GU}, we conclude that composing elements in $J^{-1/2,1/2}(M \times M; \Delta, \F)$ 
with $e^{-it \h^{-1} \widehat Q}$ gives elements in $J^{-1/2,1/2}(M\times M; \Delta(t), \F(t) )$. 
\end{proof}


\subsection{An Egorov-type Theorem}

We can easily prove the following corollary.

\begin{corollary}
Let $\widehat Q$ be a zeroth order semiclassical pseudodifferential operator satisfying the conditions of Lemma \ref{Lemma:QHypothesis}, and the Levi condition. Then for any zeroth order semiclassical pseudodifferential operator $\widehat A(\h)$, we have 
\[
\widetilde B (t):=e^{it \h^{-1} \Pi \widehat Q \Pi} \Pi \widehat A \Pi e^{-it \h^{-1} \Pi \widehat Q \Pi} \in J^{-1/2,1/2}\left( M\times M,\Delta, \F \right),
\]
with the following principal symbols:
\[
\begin{array}{lll}
\sigma_0 (\widetilde B(t)) (\x,\x)=\chi_{X_c} (\x)~ \left(a \circ \Phi_t^{Q} (\x)\right) & \text{ for } (\x,\x) \in \Delta\setminus\Sigma, and\\ \\
\sigma_1  (\widetilde B(t))_{\big |_{F_s}}=\Pi_{F_s} M_{{(a \circ \Phi_t^Q)}_{|_{F_s}}} \Pi_{F_s},
\end{array}
\]
where $a$ is the principal symbol of $\widehat A$, $\Phi_t^Q$ the Hamilton flow $Q$, 
$F_s$ is an orbit in $\partial X_c$, and $\Pi_{F_s}$ is the Szeg\"o projector of $F_s$.
\end{corollary}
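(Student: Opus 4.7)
The plan is to reduce $\widetilde B(t)$ to a triple operator product controlled by Theorem \ref{thm:Propagator}. Since $\Pi$ commutes with $\Pi\widehat Q\Pi$, it commutes with the exponentials $e^{\pm it\h^{-1}\Pi\widehat Q\Pi}$. Setting $U(t):=\Pi e^{-it\h^{-1}\Pi\widehat Q\Pi}$ therefore yields
\[
\widetilde B(t) = U(t)^{\ast}\,\widehat A\,U(t).
\]
Theorem \ref{thm:Propagator} places $U(t)$ in $J^{-1/2,1/2}(M\times M;\Delta(t),\F(t))$, and closure of $J^{\ell,m}$ under adjoints places $U(t)^{\ast}$ in $J^{-1/2,1/2}(M\times M;\Delta(-t),\F(-t))$.

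Next, one verifies that the canonical-relation composition $C_1\circ\Delta\circ C_2$, for $C_1\in\{\Delta(-t),\F(-t)\}$ and $C_2\in\{\Delta(t),\F(t)\}$, is clean and lands inside $\Delta\cup\F$. The four resulting identities
\[
\Delta(-t)\circ\Delta(t)=\Delta,\qquad \Delta(-t)\circ\F(t)=\F(-t)\circ\Delta(t)=\F(-t)\circ\F(t)=\F,
\]
all follow from the commutation $\Phi_{-t}^Q\Phi_s^P\Phi_t^Q=\Phi_s^P$ on $\partial X_c$ granted by Lemma \ref{Lemma:QHypothesis}. The composition theorem for $J$-classes attached to cleanly composable pairs of admissible Lagrangians (the same generalization used to close the proof of Theorem \ref{thm:Propagator}) then gives $\widetilde B(t)\in J^{-1/2,1/2}(M\times M;\Delta,\F)$.

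To compute the symbols, factor $U(t)=W(t)\,e^{-it\h^{-1}\widehat Q}$ as in the proof of Theorem \ref{thm:Propagator}, with $W(t)\in J^{-1/2,1/2}(M\times M;\Delta,\F)$ and $\sigma_{0}(W(t))=\chi_{X_c}$ on the diagonal. On $\Delta\setminus\Sigma$ the middle factor $W(t)^{\ast}\widehat A W(t)$ has diagonal symbol $\chi_{X_c}a$, and semiclassical Egorov for the outer factors $e^{\pm it\h^{-1}\widehat Q}$ transports this to $\chi_{X_c}(\x)(a\circ\Phi_t^Q)(\x)$, using that $\Phi_t^Q$ preserves $X_c$. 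For the flow-out symbol, the diagonal $\Phi_t^Q$-action moves each fiber $F_s$ to $F_{\Phi_t^{Q_S}(s)}$, where $Q_S$ is the descent of $Q$ to $S$. Viewing $\sigma_1$ as a family of fiberwise $\Psi$DOs via Proposition \ref{prop:Symb1PsiDO} and applying Proposition \ref{prop:SymbCompoModelCase}, the constancy of $Q$ along each $F_s$ renders the Szeg\"o projectors invariant under the inter-fiber transport while the multiplier $a$ pulls back along $\Phi_t^Q$, yielding $\Pi_{F_s}M_{(a\circ\Phi_t^Q)|_{F_s}}\Pi_{F_s}$.

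The main obstacle is this last flow-out identification: one has to check carefully that the transport pieces of $U(\pm t)$ intertwine the Szeg\"o projectors across different fibers---ultimately because the pre-quantum lift of $\Phi_t^Q$ preserves positive-frequency subspaces when $Q$ is constant on the fibers---so that the Egorov conjugation on the flow-out reduces to pullback of the multiplier $a$ by $\Phi_t^Q$ while leaving the projector-sandwich structure intact. The Levi hypothesis $\text{sub }\widehat Q=0$ enters in suppressing lower-order corrections that could otherwise spoil this clean picture, exactly as in Proposition \ref{Prop:CommutatorPiQ}.
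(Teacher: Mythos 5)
Your overall strategy is recognizably close to the paper's, but two steps, as written, are not supported by anything available in the paper. For membership in the algebra you compose $U(t)^{*}\in J^{-1/2,1/2}(\Delta(-t),\F(-t))$ with $\widehat A$ and $U(t)\in J^{-1/2,1/2}(\Delta(t),\F(t))$ by appealing to a ``composition theorem for $J$-classes attached to cleanly composable pairs,'' which you attribute to the tool that closes the proof of Theorem \ref{thm:Propagator}. That tool (the variant of Proposition 4.1 of \cite{GU}) only covers composing an element of $J^{-1/2,1/2}(M\times M;\Delta,\F)$ with a Fourier integral operator attached to a \emph{single canonical graph}; no result in the paper (and none supplied by you) allows composing two operators with singular symbols attached to two \emph{different} intersecting pairs --- this is precisely the delicate point of the whole theory (the content of \cite{AUh} in the diagonal/flow-out geometry), so it cannot be invoked for free. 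The gap is repairable with your own factorization $U(t)=W(t)e^{-it\h^{-1}\widehat Q}$: write $\widetilde B(t)=e^{it\h^{-1}\widehat Q}\bigl(W(t)^{*}\widehat A\,W(t)\bigr)e^{-it\h^{-1}\widehat Q}$, handle the middle factor with the fixed-pair composition theorem, and note that conjugation by the graph FIO $e^{\pm it\h^{-1}\widehat Q}$ maps the pair $(\Delta,\F)$ to itself because $\Phi_t^Q$ preserves the null foliation of $\partial X_c$ (Lemma \ref{Lemma:QHypothesis}). The paper does essentially this but in the opposite order, writing $\widetilde B(t)=W(-t)\,e^{it\h^{-1}\widehat Q}\widehat A\,e^{-it\h^{-1}\widehat Q}\,W(-t)^{*}$, so that the Egorov-conjugated $\Psi$DO sits in the middle and \emph{only} compositions within the fixed pair $(\Delta,\F)$ ever occur.

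The second gap is the flow-out symbol. You yourself flag the decisive claim --- that the transport pieces of $U(\pm t)$ intertwine the fiberwise Szeg\"o projectors so that the conjugation reduces to pullback of the multiplier by $\Phi_t^Q$ --- as ``the main obstacle,'' and you justify it only by the heuristic that the prequantum lift of $\Phi_t^Q$ preserves positive-frequency subspaces; that is the statement to be proved, not a proof. Moreover you record only $\sigma_0(W(t))=\chi_{X_c}$, whereas computing $\sigma_1(\widetilde B(t))$ requires the flow-out symbol of $W(\pm t)$, namely $\sigma_1(W(\pm t))=\sigma_1(\Pi)=\tfrac{1}{\sqrt{2\pi}}\tfrac{1}{1-e^{is}}$ (which follows from the construction in Theorem \ref{thm:Propagator}, where $W=\Pi$ plus $O(\h)$ corrections in $\text{sc-}I^{-1/2}(M\times M;\F)$). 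With that in hand the paper concludes by a direct application of the composition formula of Proposition \ref{CompoModelCase}: $\sigma_1(\widetilde B(t))(\Phi_s^P\x,\x)=\tfrac{1}{\sqrt{2\pi}}\int \tfrac{1}{\sqrt{2\pi}\,(1-e^{i(s-\tilde s)})}\,a\bigl(\Phi_t^Q\Phi_{\tilde s}^P\x\bigr)\,\tfrac{1}{\sqrt{2\pi}\,(1-e^{i\tilde s})}\,d\tilde s$, which is then read off, via Proposition \ref{prop:Symb1PsiDO}, as $\Pi_{F_s}M_{(a\circ\Phi_t^Q)|_{F_s}}\Pi_{F_s}$. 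Your citation of Proposition \ref{prop:SymbCompoModelCase} does not by itself cover the conjugation by $e^{\pm it\h^{-1}\widehat Q}$, which moves fibers; the explicit kernel-level computation (or an equivalent argument establishing the intertwining you assert) is needed and is missing from your proposal.
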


\begin{proof}
Let us consider $W(t)$ as in equation \eqref{eq:W}. 
Notice that
\[
\widetilde B(t)=e^{it\h^{-1} \Pi \widehat Q \Pi} \Pi \widehat A \Pi e^{-i t N \Pi \widehat Q \Pi}=W(-t) e^{it \widehat Q} \widehat A e^{-i t \widehat Q} W(-t)^*,
\]
which proves it belongs to the algebra. Since $e^{it \h^{-1} \widehat Q} \widehat A e^{-i t \h^{-1} \widehat Q }$ is a semiclassical 
pseudodifferential operator with symbol $a \circ \Phi_t^Q$, the symbol on the diagonal can be trivially obtained. 
To compute the symbol on the flow-out, we note that the principal symbols of $W$ are exactly those of $\Pi$ and we
use Proposition \ref{CompoModelCase}:
\[
\sigma_1 (\widetilde B(t))(\y=\Phi_s^P(\x), \x)=
\]
\[
=\frac{1}{\sqrt{2\pi}} \int \sigma_1 \left( W(-t) \right)(\y, \Phi_{\tilde s}^P (\x)) \sigma_1 
\left( e^{it \widehat Q}\widehat A e^{-it \widehat Q} W(-t)^* \right) (\Phi_{\tilde s}^P (\x), \x) d\tilde s
\]
\[
=\frac{1}{\sqrt{2\pi}} \int \frac{1}{\sqrt{2\pi}} \frac{1}{1-e^{i(s-\tilde s)}} a(\Phi_{t}^Q \Phi_{\tilde s}^P (\x)) \frac{1}{\sqrt{2\pi}} \frac{1}{1-e^{i\tilde s}} d\tilde s.
\]
As a pseudodifferential operator on the fibers, this is $\Pi_{F_s} M_{{(a \circ \Phi_t^Q)}_{|_{F_s}}} \Pi_{F_s}$.
\end{proof}



\section{A numerical study of propagation of coherent states}

\label{sec:SCNA}

Let $\widehat Q$ be a zeroth order semiclassical pseudodifferential operator with symbol $Q$. It is well-known that, if 
$\psi_{(x_0,p_0)}$ is a coherent state with center at $(x_0,p_0)$, then $e^{-it\h^{-1}\widehat Q}(\psi_{(x_0,p_0)})$ is a coherent
state (appropriately  ``squeezed'') with center at $\Phi_{t}^Q(x_0,p_0)$, where $\Phi_{}^Q$ is the Hamilton flow of $Q$.  
If the flow $\Phi_{}^Q$
preserves $X_{c}$ and the center $(x_0,p_0)$ is in the interior of $X_{c}$, then the same conclusion holds for the propagation
$e^{-it\h^{-1}\Pi\widehat Q\Pi}(\psi_{(x_0,p_0)})$ of the coherent state by
$\Pi\widehat Q\Pi$, as the trajectory of the center will remain away from the boundary $\partial X_{c}$ and
everything is as if we were in the boundaryless case.  

In this section we present results of a numerical calculation of $e^{-it\h^{-1}\Pi\widehat Q\Pi}(\psi_{(x_0,p_0)})$
in an example where the Hamilton flow of $Q$ does not preserve $X_{c}$, that is, 
trajectories of $\Phi_{}^Q$ cross the boundary $\partial X_{c}$.

We consider the Harmonic oscillator $\widehat P=\frac{1}{2}(x^2-\h^2 \partial_x^2)$ in $\bbR^1$, and the corresponding projector $\Pi$ onto the 
span of its eigenfunctions of with eigenvalues less than or equal to one.   We take $Q=x^2-p^2$, and $\widehat Q$ the obvious  quantization of $Q$.
Figure \ref{fig:ContourNonComm} shows some energy levels of $Q$. Notice that the energy levels cross the boundary of $X_c=\left\{ x^2+p^2 \le 2 \right\}$.

\begin{figure}[h!]
\begin{center}
\resizebox{1.8 in}{1.8 in}
{\includegraphics[width=1cm]{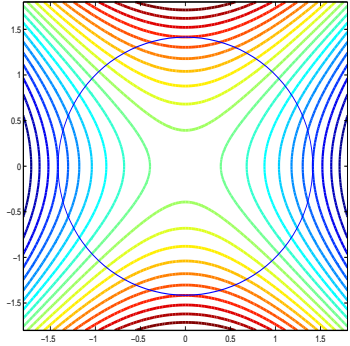}}
\end{center}
\caption{\label{fig:ContourNonComm}{Energy levels of $ Q$ and the boundary $\partial X_{c}=\{P=1\}$.}}
\end{figure}

We now take a coherent state centered inside the interior of $X_c$, and numerically compute its propagation under $\Pi\widehat Q\Pi$.
We do the calculation  in Bargmann space, for symplicity.

We recall that the Bargmann space is defined as the Hilbert space
\[
 \mathcal{B}=\left\{ f:\mathbb{C} \to \mathbb{C}: f \text{ entire and } \int \left| f(z) \right|^2 e^{-\frac{|z|^2}{ \hbar}} dm_z < \infty \right\},
\]
where $dm_z=dx dp$ and $z=\frac{x-i p}{\sqrt{2}}$, with the Hermitian inner product
\[
\langle f, g \rangle =\int f(z) \overline{g(z)} e^{-\frac{z\overline z} {\hbar}}dm_z.
\]
The Harmonic oscillator in Bargmann space is given by
\[
\widehat P=\hbar z \frac{\partial}{\partial z}+\frac{\hbar}{2},
\]
with principal symbol  $P(z,\overline z)=z\overline z$ and eigenbasis
\[
\left\{ b_n=\frac{z^n}{\sqrt{\hbar^n n!}} \right\}_{n\in \mathbb{N}\cup {\left\{ 0 \right\}}},
 ~~\widehat P b_n=\hbar \left(n+\frac{1}{2}\right) b_n=\lambda_n b_n.
\]
The quantization of $Q=x^2-p^2$ in Bargmann space is the operator $\widehat Q =\hbar^2 \frac{\partial^2}{\partial z^2}+z^2 $.  
Applying $\widehat Q$ to the eigenbasis, we get
\[
\widehat Q (b_n) =\hbar \sqrt{n (n-1)}b_{n-2}+\hbar \sqrt{(n+1) (n+2)} b_{n+2},
\]
which gives a  a ``generalized'' Toeplitz matrix for $\Pi_N \widehat Q \Pi_N$ for each positive integer $N$, where $\h = \frac{1}{N}$. 
The (normalized) coherent state in Bargmann space, with center at $w$, is given by the simple formula
\[
\Psi_{w}(z) =e^{\frac{z \overline w}{\hbar}}e^{-\frac{w\overline w}{2\hbar}}.
\]
We apply the propagator $e^{-i t\h^{-1} \Pi_N \widehat Q \Pi_N}$ to the projected coherent state
\[
\Psi_w(z,N) := \Pi_{N}\Psi_{w}(z) =
\sum_{n=0}^{N}\frac{\overline w^n}{\sqrt{n! \hbar^n}}e^{-\frac{\overline w w}{2\hbar}} b_n.
\]

In Bargmann space, we measure the concentration in phase space of any semiclassical family $\psi$ by taking the absolute value of the family
times the square root of the Bargmann weight, namely, by forming the Husimi density:
\[
|\psi |_{H}(z) := |\psi(z)|\,e^{-z\zbar/2\h}.
\] 
We took as initial data a projected coherent state with center at $w=-0.25 -0.6i$, which corresponds to $(x,p)=\sqrt{2}\, (-0.25, 0.6)$. Figure \ref{fig:ProHarmOsc} consists of contour plots of the Husimi density in the $z$ variable
of the initial projected coherent state, its propagation at $t=0.25$ (approximately when the center of the coherent state hits the boundary), and at time $t=0.5$.
We observe that after the time of collision of the center with the boundary, 
the coherent state splits into two localized states with centers on inward trajectories and 
with the same classical energy. Here we took $N=100$.
\begin{figure}[h!]
\begin{center}
\resizebox{5.5 in}{1.8 in}
{\includegraphics[width=1cm]{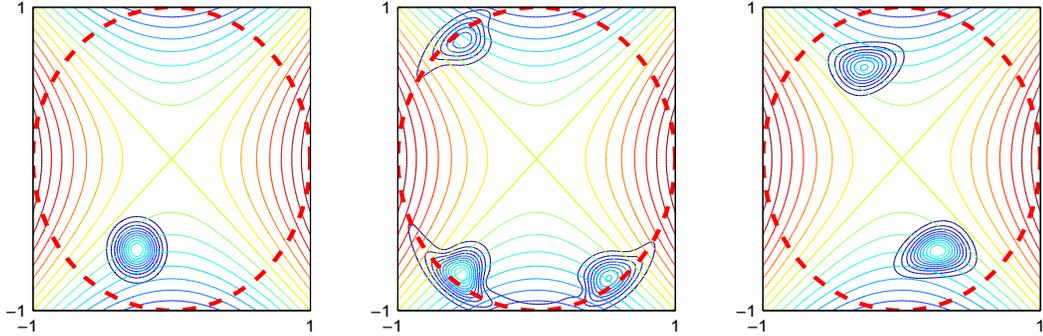}}
\end{center}
\caption{\label{fig:ProHarmOsc} \small{A coherent state is propagated by $e^{-it N\Pi_{N} \hat Q \Pi_{N}}$ ($Q=x^2-p^2$) at time $t=0$ (left), $t=.25$ (middle) and $t=0.5$ (right). The contour plots of the Husimi densities at each time are shown.}}
\end{figure}
The splitting happens immediately after the center collides with the boundary; thus one can speak of infinite-propagation speed along the boundary.
Note that the evolution is time-reversible, so that in some cases the opposite phenomenon will occur, namely, two localized states
with same classical energy will hit the boundary at the same time and combine into one.

\bibliographystyle{plain}
\bibliography{References.bib}

\end{document}